\numberwithin{equation}{section}
\newtheorem{thm}{Theorem}[section]
\newtheorem{prop}[thm]{Proposition}
\newtheorem{lem}[thm]{Lemma}
\newtheorem{cor}[thm]{Corollary}
\theoremstyle{definition}
\newtheorem{rem}[thm]{Remark}
\newtheorem{example}[thm]{Example}
\newcommand{\bk}{\boldsymbol{k}}
\begin{document}

\title[An algebraic identity for generating series of
        deformed MZVs]
{An identity for generating series of
        deformations of multiple zeta values within an algebraic framework }
\author{Yoshihiro Takeyama}
\address{Department of Mathematics,
        Institute of Pure and Applied Sciences,
        University of Tsukuba, Tsukuba, Ibaraki 305-8571, Japan}
\email{takeyama@math.tsukuba.ac.jp}
\thanks{This work was supported by JSPS KAKENHI Grant Number 22K03243.}
\keywords{multiple zeta values, $q$-analogue, periodlike functions}
\subjclass[2020]{11M32, 33E20, 05A30}

\begin{abstract}
        Bachmann proves an identity expressing the generating series
        of MacMahon's generalized sum-of-divisors $q$-series in terms of Eisenstein series.
        MacMahon's $q$-series can be regarded
        as a $q$-analogue of the multiple zeta value $\zeta(2, 2, \ldots , 2)$,
        up to a power of $1-q$.
        Based on this observation, we generalize Bachmann's identity within
        an algebraic framework and prove a general identity.
        As a byproduct, we obtain a formula for the generating series of
        another deformation of multiple zeta values defined by the author.
        In this formula, periodlike functions introduced by Lewis and Zagier appear
        as a counterpart of Eisenstein series.
\end{abstract}
\maketitle

\setcounter{section}{0}
\setcounter{equation}{0}


\section{Introduction}

\textit{Multiple zeta values} (MZVs) are real numbers defined by
\begin{align*}
        \zeta(\bk)=\sum_{0<m_{1}<\cdots <m_{r}}
        \frac{1}{m_{1}^{k_{1}} \cdots m_{r}^{k_{r}}}
\end{align*}
for a tuple of positive integers $\bk=(k_{1}, \ldots , k_{r})$
satisfying $k_{r}\ge 2$.
There are some $q$-analogues of MZVs ($q$MZVs)
(see, e.g., \cite{Zhaobook}).
The Bradley-Zhao model of $q$MZV is given by
\begin{align}
        \zeta_{q}(\bk)=\sum_{0<m_{1}<\cdots <m_{r}}
        \frac{q^{(k_{1}-1)m_{1}+\cdots +(k_{r}-1)m_{r}}}{[m_{1}]^{k_{1}} \cdots [m_{r}]^{k_{r}}},
        \label{eq:BZ-model}
\end{align}
where $0<q<1$ and $[m]=(1-q^{m})/(1-q)$ is the $q$-integer.
Since $[m]\to m$ in the limit as $q\to 1$,
we see that $\lim_{q\to 1-0}\zeta_{q}(\bk)=\zeta(\bk)$.

For $r\ge 1$, we set
\begin{align*}
        A_{r}(q)=(1-q)^{-2r}\zeta_{q}(\underbrace{2, \ldots , 2}_{r})=
        \sum_{0<m_{1}<\cdots <m_{r}}
        \frac{q^{m_{1}+\cdots +m_{r}}}{(1-q^{m_{1}})^{2} \cdots (1-q^{m_{r}})^{2}}.
\end{align*}
The $q$-series $A_{r}(q)$ is introduced by MacMahon \cite{Mac}
as a generating function of a generalization of sum-of-divisors.
To investigate its quasi-modularity, various expressions
have been derived.
In this paper, we focus on the following identity due to Bachmann \cite{Bachmann} of
the generating series of $A_{r}(q)$:
\begin{align}
        1+\sum_{r\ge 1}A_{r}(q)X^{2r}=\exp{\left(
                2\sum_{k\ge 1}\frac{(-1)^{k-1}}{(2k)!}
                G_{2k}(q)\left(2\arcsin{(X/2)}\right)^{2k}
                \right)},
        \label{eq:Bachmann}
\end{align}
where $G_{k}(q)=\sum_{n\ge 1}\sigma_{k-1}(n)q^{n}$ and
$\sigma_{a}(n)=\sum_{d\mid n}d^{a}$ is the divisor function\footnote{
        The definition of $G_{k}(q)$ differs from
        that in \cite{Bachmann}.
        Here we follow the notation in \cite{KMS}.}.
Note that $G_{2k}(q)$ is related to the Eisenstein series via the identity
\begin{align*}
        \sum_{(m, n)\in \mathbb{Z}^{2}\setminus\{(0, 0)\}}
        \frac{1}{(m+n\tau)^{2k}}=
        2\zeta(2k)+\frac{2(2\pi i)^{2k}}{(2k-1)!}G_{2k}(q),
\end{align*}
where $q=e^{2\pi i \tau}$.
See, e.g., \cite{AOS, Rose} for other expressions of $A_{r}(q)$.

It is known that the Bradley-Zhao model is closed under multiplication
if powers of $1-q$ are allowed as coefficients.
For example, we see that
\begin{align}
        \zeta_{q}(2)\zeta_{q}(3)
         & =\sum_{0<m}\frac{q^{m}}{[m]^2}\sum_{0<n}\frac{q^{2n}}{[n]^3}=
        \left(\sum_{0<m<n}+\sum_{0<n<m}+\sum_{0<m=n}\right)\frac{q^{m+2n}}{[m]^2 [n]^3}
        \label{eq:q-harmonic}                                              \\
         & =\zeta_{q}(2, 3)+\zeta_{q}(3, 2)+\zeta_{q}(5)+(1-q)\zeta_{q}(4)
        \nonumber
\end{align}
since $q^{3m}=q^{4m}+(1-q^{m})q^{3m}$ and $(1-q^{m})/[m]=1-q$.
This product structure is realized as a commutative multiplication
on a non-commutative polynomial algebra as follows.
Let $\mathcal{C}=\mathbb{Q}[\hbar, \hbar^{-1}]$ be the Laurent polynomial ring.
We endow $\mathbb{R}$ with
the structure of a $\mathcal{C}$-module
such that $\hbar^{\pm}$ acts as multiplication by $(1-q)^{\pm 1}$.
We introduce the non-commutative polynomial algebra
$\widehat{\mathfrak{H}^{1}}=\mathcal{C}\langle e_{1}, g_{1}, g_{2}, g_{3}, \ldots \rangle$.
There exists a $\mathcal{C}$-submodule $\widehat{\mathfrak{H}^{0}}$
of $\widehat{\mathfrak{H}^{1}}$ and
a $\mathcal{C}$-linear map $Z_{q}: \widehat{\mathfrak{H}^{0}} \longrightarrow \mathbb{R}$
such that $Z_{q}(1)=1$ and
$Z_{q}(e_{k_{1}}\cdots e_{k_{r}})=\zeta_{q}(k_{1}, \ldots , k_{r})$,
where $e_{k}=g_{k}+\hbar g_{k-1}$ for $k\ge 2$.
Then we can define a commutative multiplication $\ast_{\hbar}$ on $\widehat{\mathfrak{H}^{1}}$
called the \textit{harmonic product}
such that $\widehat{\mathfrak{H}^{0}}$ is closed under it and
the map $Z_{q}$ becomes a homomorphism.
For example, we have $e_{2}\ast_{\hbar}e_{3}=e_{2}e_{3}+e_{3}e_{2}+e_{5}+\hbar e_{4}$
and it implies \eqref{eq:q-harmonic}.

In the algebraic framework,
the $q$-series $A_{r}(q)$ is equal to $Z_{q}(\hbar^{-2r}e_{2}^{r})$,
and there exists an element $\phi_{k}$ of $\widehat{\mathfrak{H}^{0}}$
such that $Z_{q}(\hbar^{-k}\phi_{k})=G_{k}(q)$ for $k\ge 1$.
Then Bachmann's identity \eqref{eq:Bachmann} follows from the equality
\begin{align}
        \frac{1}{1-\hbar^{-2}e_{2}X^2}=\exp_{\ast_{\hbar}}{\left(
                2\sum_{k\ge 1}\frac{(-1)^{k-1}}{(2k)!} \hbar^{-2k}\phi_{2k}
                (2\arcsin{(X/2)})^{2k}
                \right)}
        \label{eq:Bachmann-algebraic}
\end{align}
in the formal power series ring $\widehat{\mathfrak{H}^{0}}[[X]]$,
where $\exp_{\ast_{\hbar}}$ is the exponential with respect to the harmonic product
(see Section \ref{sec:harmonic-product} for the definition).

The main purpose of this paper is to prove a general identity
which contains \eqref{eq:Bachmann-algebraic} as a special case
(see Theorem \ref{thm:main1} below).
It gives an expression of a formal power series of the form
$1/(1-pX)$ for any element $p$ of the $\mathbb{Q}$-module
$\sum_{k\ge 2}\mathbb{Q}\,\hbar^{-k}e_{k}$
in terms of the exponential $\exp_{\ast_{\hbar}}$ whose exponent is
a linear combination of the elements $\hbar^{-k}\phi_{k} \, (k\ge 1)$.
In the general identity,
$\arcsin{(X/2)}$ in \eqref{eq:Bachmann-algebraic} is replaced with
a convergent power series of the form $\mathrm{Log}{(1-\alpha(X))}$.
Here $\alpha(x)$ is a function such that
$z=1/\alpha(x)$ is a root of $1-x^{N}P(z)$,
where $P(z)$ is a polynomial with rational coefficients which is
determined from $p$, and $N=\deg{P}$.
Although it seems difficult to compute $\mathrm{Log}(1-\alpha(X))$ in general,
it is possible to carry out the calculation in the case where
$P(z)=z^{N-1}(z-1)$, which corresponds to $p=(-\hbar)^{-N}e_{N}$,
with $N\ge 2$ (see Theorem \ref{thm:main2} below).
Then we obtain Bachmann's identity \eqref{eq:Bachmann} in the case of $N=2$.

Our general identity is an equality of formal power series with coefficients
in $\widehat{\mathfrak{H}^{0}}$.
Hence,
we can obtain an equality by applying any homomorphism
with respect to the harmonic product
to both sides of the general identity.
For example, we get the identity due to
Kang, Matsusaka and Shin \cite{KMS},
which is a generalization of \eqref{eq:Bachmann},
{}from our general identity (see Remark \ref{rem:KMS} below).
In this paper we discuss another example.
In \cite{T-omega},
the author introduces a deformation of MZVs called
the \textit{$\omega$-deformed MZV} ($\omega$MZV).
It contains a positive parameter $\omega$ and
we recover MZV in the limit as $\omega \to +0$.
The $\omega$MZVs are closed under multiplication,
and their product structure is the same as $q$MZVs
where $1-q$ in the coefficients are replaced with $2\pi i \omega$.
In other words, there exists a homomorphism, by abuse of notation,
$Z_{\omega}: \widehat{\mathfrak{H}^{0}} \longrightarrow \mathbb{C}$
with respect to the harmonic product $\ast_{\hbar}$,
where $\mathbb{C}$ is endowed with the structure of a $\mathcal{C}$-module
such that $\hbar^{\pm 1}$ acts as multiplication by $(2\pi i \omega)^{\pm 1}$.
Hence we can obtain a formula for a generating series of $\omega$MZVs from
our general identity by applying the map $Z_{\omega}$.
For example, we find that
\begin{align*}
        1+\sum_{r\ge 1}Z_{\omega}(e_{2}^{r})X^{2r}=
        \frac{\sin{(\omega^{-1}\arcsin{(\pi i \omega X)})}}{\pi i X}
        \exp{\left(-\frac{\arcsin^{2}{(\pi i \omega X)}}{\pi i \omega}\right)}
\end{align*}
(see Corollary \ref{cor:omega-MZV-222} below).
By taking the limit as $\omega \to +0$,
we recover the formula for MZVs
\begin{align*}
        1+\sum_{r\ge 1}\zeta(\underbrace{2, \ldots , 2}_{r})X^{2r}=
        \frac{\sin{\pi i X}}{\pi i X}=
        \sum_{r\ge 0}\frac{\pi^{2r}}{(2r+1)!}X^{2r}.
\end{align*}

Finally we remark that the image $Z_{\omega}(\hbar^{-k}\phi_{k})$,
which is a counterpart of the $q$-series
$Z_{q}(\hbar^{-k}\phi_{k})=G_{k}(q)=\sum_{n\ge 1}\sigma_{k-1}(n)q^{n}$, is a function of $\omega$
satisfying
\begin{align*}
        f(\omega)=f(\omega+1)+(\omega+1)^{-2s}f(\frac{\omega}{\omega+1})
\end{align*}
with $2s=k$.
In \cite{LZ} Lewis and Zagier study general solutions
of the above three-term functional equation,
which they call \textit{periodlike functions},
in connection with Maass wave forms.

We outline the structure of this paper below.
In Section \ref{sec:algebra} we describe the algebraic framework for
$q$MZVs in more detail.
In Section \ref{sec:main} we prove the general identity.
In Section \ref{sec:solvable} we consider the solvable case where
$P(z)=z^{N-1}(z-1)$ and show how Bachmann's identity \eqref{eq:Bachmann} is derived from
the general identity.
In Section \ref{sec:omega-MZV} we review the definition of
the $\omega$MZV and its properties,
and derive a formula for a generating series of $\omega$MZVs from
the general identity.
Appendix \ref{sec:app-stirling} is devoted to a brief review of Stirling numbers.
In Appendix \ref{sec:app2}
we prove some properties of the power series needed to express the roots of
the polynomial $1-x^{N}P(z)$ in the solvable case where
$P(z)=z^{N-1}(z-1)$ mentioned above.

Throughout this paper we denote by
${m\brack n}$
the Stirling number of the first kind
and by ${m\brace n}$
that of the second kind.


\section{Algebraic framework for $q$MZVs}\label{sec:algebra}

\subsection{Preliminaries}

Let $\mathcal{C}=\mathbb{Q}[\hbar, \hbar^{-1}]$ be
the Laurent polynomial ring of the formal variable $\hbar$ over $\mathbb{Q}$.
We denote by $\widehat{\mathfrak{H}^{1}}$
the non-commutative polynomial ring over $\mathcal{C}$
with variables $e_{1}$ and $g_{k} \, (k\ge 1)$.
We set
\begin{align*}
        e_{k}=g_{k}+\hbar g_{k-1}
\end{align*}
for $k\ge 2$.
The set
$\mathcal{A}=\{e_{1}-g_{1}\} \sqcup \{g_{k} \mid k\ge 1\}$
is algebraically independent and generates $\widehat{\mathfrak{H}^{1}}$.
We denote by $\mathfrak{z}$ the free $\mathcal{C}$-module
spanned by $\mathcal{A}$.
We denote by $\widehat{\mathfrak{H}^{0}}$
the $\mathcal{C}$-submodule of $\widehat{\mathfrak{H}^{1}}$ spanned by
$1$ and the monomials of the form $u_{1}\cdots u_{r}$
$(u_{1}, \ldots , u_{r} \in \mathcal{A})$ with $u_{r}\not=e_{1}-g_{1}$.

Hereafter we assume that $0<q<1$.
We endow
$\mathbb{R}$ with the structure of a $\mathcal{C}$-module
such that $\hbar^{\pm 1}$ acts as multiplication by $(1-q)^{\pm 1}$.
For $m\ge 1$, we define the $\mathcal{C}$-linear map
$I_{q}(\, \cdot \, |\, m): \mathfrak{z}\to \mathbb{R}$ by
\begin{align*}
        I_{q}(e_{1}-g_{1}\,|\, m)=1-q, \qquad
        I_{q}(g_{k}\,|\, m)=\left(\frac{q^{m}}{[m]}\right)^{k}
        \quad (k\ge 1),
\end{align*}
where $[m]=(1-q^{m})/(1-q)$ is the $q$-integer.
We have
\begin{align*}
        I_{q}(e_{k}\,|\,m)=\frac{q^{(k-1)m}}{[m]^{k}}
        \qquad (k\ge 1).
\end{align*}

We define the $\mathcal{C}$-linear map
$Z_{q}: \widehat{\mathfrak{H}^{0}} \longrightarrow \mathbb{R}$ by
$Z_{q}(1)=1$ and
\begin{align*}
        Z_{q}(u_{1}\cdots u_{r})=
        \sum_{0<m_{1}<\cdots <m_{r}}
        \prod_{a=1}^{r}I_{q}(u_{a} \,|\, m_{a})
\end{align*}
for $u_{1}, \ldots , u_{r} \in \mathcal{A}$ with
$u_{r}\not=e_{1}-g_{1}$.
Then the Bradley-Zhao model \eqref{eq:BZ-model} is expressed as
\begin{align*}
        \zeta_{q}(\bk)=Z_{q}(e_{k_{1}} \cdots e_{k_{r}})
\end{align*}
for a tuple of positive integers $\bk=(k_{1}, \ldots , k_{r})$ with $k_{r}\ge 2$.

We extend the map $Z_{q}$
to the formal power series ring
$Z_{q}: \widehat{\mathfrak{H}^{0}}[[X]] \longrightarrow \mathbb{R}[[X]]$ by
\begin{align*}
        Z_{q}(\sum_{n\ge 0}w_{n}X^{n})=\sum_{n\ge 0}Z_{q}(w_{n})X^n
        \qquad
        (w_{n} \in \widehat{\mathfrak{H}^{0}}).
\end{align*}


\subsection{Harmonic product}\label{sec:harmonic-product}

We define the symmetric $\mathcal{C}$-bilinear map
$\circ_{\hbar}:  \mathfrak{z} \times \mathfrak{z} \to
        \mathfrak{z}$ by
\begin{align*}
        (e_{1}-g_{1})\circ_{\hbar}(e_{1}-g_{1})=\hbar (e_{1}-g_{1}), \quad
        (e_{1}-g_{1})\circ_{\hbar} g_{k}=\hbar g_{k}, \quad
        g_{k}\circ_{\hbar} g_{l}=g_{k+l} \qquad (k, l\ge 1).
\end{align*}
It is associative,
that is,
$(u_{1} \circ_{\hbar} u_{2})\circ_{\hbar} u_{3}=u_{1}\circ_{\hbar}
        (u_{2} \circ_{\hbar} u_{3})$ for any $u_{1}, u_{2}, u_{3} \in \mathfrak{z}$.
For $u \in \mathfrak{z}$ and $n\ge 1$,
we define $u^{\circ_{\hbar}n}$ inductively by $u^{\circ_{\hbar}1}=u$ and
$u^{\circ_{\hbar}n}=(u^{\circ_{\hbar}(n-1)})\circ_{\hbar}u$ for $n\ge 2$.

The \textit{harmonic product} $\ast_{\hbar}$ on $\widehat{\mathfrak{H}^{1}}$
is the $\mathcal{C}$-bilinear binary operation
uniquely defined by the following properties:
\begin{enumerate}
        \item For any $w \in \widehat{\mathfrak{H}^{1}}$,
              it holds that $w \ast_{\hbar} 1=w$ and $1\ast_{\hbar} w=w$.
        \item For any $w, w'\in \widehat{\mathfrak{H}^{1}}$ and $u, v \in \mathcal{A}$,
              it holds that $(w u)\ast_{\hbar}(w' v)=(w\ast_{\hbar} w' v)u+(wu\ast_{\hbar} w')v+
                      (w\ast_{\hbar} w')(u\circ_{\hbar}v)$.
\end{enumerate}
The harmonic product is commutative and associative, and
$\widehat{\mathfrak{H}^{0}}$ is closed under it
since $g_{k}\circ_{\hbar} g_{l}=g_{k+l}$ for $k, l \ge 1$.
For $w \in \widehat{\mathfrak{H}^{1}}$
and $n\ge 1$,
we define $w^{\ast_{\hbar} n}$ inductively by
$w^{\ast_{\hbar} 1}=w$ and
$w^{\ast_{\hbar} n}=(w^{\ast_{\hbar}(n-1)})\ast_{\hbar}w$
for $n\ge 2$.

Since
\begin{align}
        I_{q}(u\circ_{\hbar} v\,|\, m)=I_{q}(u \,|\, m)I_{q}(v \,|\, m)
        \qquad
        (u, v \in \mathfrak{z}, \, m\ge 1),
        \label{eq:I-hom}
\end{align}
the map $Z_{q}$ is a homomorphism with
respect to the harmonic product.
Namely, it holds that
\begin{align}
        Z_{q}(w \ast_{\hbar} w')=Z_{q}(w) Z_{q}(w')
        \label{eq:Zq-harmonic}
\end{align}
for any $w, w' \in \widehat{\mathfrak{H}^{0}}$.

We extend the harmonic product to
the formal power series ring $\widehat{\mathfrak{H}^{1}}[[X]]$
by
$f(X)\ast_{\hbar}g(X)=\sum_{n \ge 0}(\sum_{k+l=n}a_{k}\ast_{\hbar}b_{l})X^{n}$
for $f(X)=\sum_{k\ge 0}a_{k}X^{k}$ and $g(X)=\sum_{l \ge 0}b_{l}X^{l}$.
We also extend the map $\circ_{\hbar}$ to
the $\mathcal{C}$-module
$\mathfrak{z}[[X]]=\{\sum_{n\ge 0}a_{n}X^{n}\mid a_{n} \in \mathfrak{z} \, (n\ge 0)\}$
by $f(X)\circ_{\hbar}g(X)=\sum_{n \ge 0}(\sum_{k+l=n}a_{k}\circ_{\hbar}b_{l})X^{n}$.

For $f(X) \in X \widehat{\mathfrak{H}^{1}}[[X]]$, we define
the exponential by
\begin{align*}
        \exp_{\ast_{\hbar}}(f(X))=1+\sum_{n=1}^{\infty}\frac{1}{n!}
        (f(X))^{\ast_{\hbar} n}
\end{align*}
which is well-defined as an element of
$\widehat{\mathfrak{H}^{1}}[[X]]$ because
$f(X)$ does not have a constant term.
Then we have
\begin{align*}
        Z_{q}(\exp_{\ast_{\hbar}}{(f(X))})=\exp{(Z_{q}(f(X)))}
\end{align*}
for any $f(X)\in X\widehat{\mathfrak{H}^{0}}[[X]]$.

\subsection{Bachmann's identity}

We rewrite Bachmann's identity \eqref{eq:Bachmann} in the algebraic framework.
First we see that
\begin{align*}
        A_{r}(q)=(1-q)^{-2r}\sum_{0<m_{1}<\cdots <m_{r}}
        \prod_{a=1}^{r}I_{q}(e_{2}\mid m_{a})=Z_{q}(\hbar^{-2r}e_{2}^{r}).
\end{align*}
Next the function $G_{k}(q)$ is realized as follows.

\begin{prop}\label{prop:phik-G}
        For $k\ge 1$, we set
        \begin{align}
                \phi_{k}=\sum_{j=1}^{k}(j-1)!
                {k\brace j}
                \hbar^{k-j}g_{j}.
                \label{eq:def-phi}
        \end{align}
        Then we have $Z_{q}(\hbar^{-k}\phi_{k})=G_{k}(q)$.
\end{prop}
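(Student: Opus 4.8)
The plan is to unwind the definitions and reduce the statement to a single classical Stirling-number identity. First I would observe that every letter $g_{j}$ is itself an admissible word, since $g_{j}\in\mathcal{A}$ and $g_{j}\neq e_{1}-g_{1}$, so $g_{j}\in\widehat{\mathfrak{H}^{0}}$ and, by the definitions of $Z_{q}$ and $I_{q}$,
\begin{align*}
        Z_{q}(g_{j})=\sum_{m\ge 1}I_{q}(g_{j}\mid m)=\sum_{m\ge 1}\left(\frac{q^{m}}{[m]}\right)^{j}=\sum_{m\ge 1}\frac{(1-q)^{j}q^{jm}}{(1-q^{m})^{j}}.
\end{align*}
Since $\hbar^{-k}\phi_{k}=\sum_{j=1}^{k}(j-1)!{k\brace j}\hbar^{-j}g_{j}$ and $\hbar^{-j}$ acts on $\mathbb{R}$ as multiplication by $(1-q)^{-j}$, the $\mathcal{C}$-linearity of $Z_{q}$ together with the previous display gives, after the factors $(1-q)^{\pm j}$ cancel,
\begin{align*}
        Z_{q}(\hbar^{-k}\phi_{k})=\sum_{j=1}^{k}(j-1)!\,{k\brace j}\sum_{m\ge 1}\frac{q^{jm}}{(1-q^{m})^{j}}.
\end{align*}

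Next I would expand everything as a series in $q$. From the negative binomial series $(1-q^{m})^{-j}=\sum_{i\ge 0}\binom{i+j-1}{j-1}q^{im}$ one gets $\frac{q^{jm}}{(1-q^{m})^{j}}=\sum_{l\ge j}\binom{l-1}{j-1}q^{lm}$; all the series involved converge absolutely for $0<q<1$, so the summations may be reordered freely. Collecting, for fixed $m\ge 1$, the coefficient of $q^{lm}$ yields
\begin{align*}
        Z_{q}(\hbar^{-k}\phi_{k})=\sum_{m\ge 1}\sum_{l\ge 1}c_{k,l}\,q^{lm},\qquad c_{k,l}=\sum_{j=1}^{k}(j-1)!\,{k\brace j}\binom{l-1}{j-1},
\end{align*}
where the terms with $j>l$ contribute nothing because then $\binom{l-1}{j-1}=0$.

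The heart of the proof is then to identify $c_{k,l}=l^{k-1}$. Writing $(j-1)!\binom{l-1}{j-1}=(l-1)(l-2)\cdots(l-j+1)$, this amounts to $\sum_{j=1}^{k}{k\brace j}(l-1)^{\underline{j-1}}=l^{k-1}$, which is precisely the classical expansion $x^{k}=\sum_{j=0}^{k}{k\brace j}x^{\underline{j}}$ (recalled in Appendix \ref{sec:app-stirling}) evaluated at $x=l$ and divided by $l$, the $j=0$ term vanishing for $k\ge 1$. Granting this, I would conclude
\begin{align*}
        Z_{q}(\hbar^{-k}\phi_{k})=\sum_{m\ge 1}\sum_{l\ge 1}l^{k-1}q^{lm}=\sum_{n\ge 1}\Bigl(\sum_{l\mid n}l^{k-1}\Bigr)q^{n}=\sum_{n\ge 1}\sigma_{k-1}(n)\,q^{n}=G_{k}(q),
\end{align*}
which is the assertion. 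I do not anticipate a genuine obstacle: the only points needing care are the justification of the rearrangements (immediate from absolute convergence) and the bookkeeping with the two Stirling identities, and the real content is the single combinatorial identity $c_{k,l}=l^{k-1}$.
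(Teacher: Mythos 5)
Your proof is correct, but it follows a different route from the paper's. The paper proves the identity for all $k$ simultaneously by forming the exponential generating function $\Phi(T)=\sum_{k\ge 1}\frac{T^{k-1}}{(k-1)!}Z_{q}(\hbar^{-k}\phi_{k})$, converting the inner sum over $j$ via \eqref{eq:stirling-second-exp} into $-\frac{d}{dT}\log\bigl(1-\frac{q^{m}}{1-q^{m}}(e^{T}-1)\bigr)=\frac{q^{m}e^{T}}{1-q^{m}e^{T}}$, and then re-expanding in $T$ to read off $G_{k}(q)$ coefficientwise. You instead work at fixed $k$, expand $\frac{q^{jm}}{(1-q^{m})^{j}}$ by the negative binomial series, and reduce everything to the single identity $\sum_{j=1}^{k}{k\brace j}(l-1)^{\underline{j-1}}=l^{k-1}$, which is the classical expansion $x^{k}=\sum_{j}{k\brace j}x^{\underline{j}}$ at $x=l$. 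The combinatorial content is essentially the same (your identity is the coefficient-level shadow of the paper's generating-function manipulation), but your version is more elementary and avoids the formal series in $T$ altogether, at the cost of having to justify the rearrangements separately — which, as you note, is immediate from absolute convergence for $0<q<1$. One small correction: the identity $x^{k}=\sum_{j=0}^{k}{k\brace j}x^{\underline{j}}$ is not among the formulas actually listed in Appendix \ref{sec:app-stirling}; it is standard and can be deduced, e.g., from \eqref{eq:binom-power-sum} by binomial inversion or from \eqref{eq:shifted-factorial} together with \eqref{eq:stirling-duality}, but you should either cite it as classical or derive it rather than attribute it to the appendix.
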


\begin{proof}
        We consider the generating function
        \begin{align*}
                \Phi(T)=\sum_{k\ge 1}\frac{T^{k-1}}{(k-1)!}Z_{q}(\hbar^{-k}\phi_{k})=
                \sum_{k\ge 1}T^{k-1}
                \sum_{j=1}^{k}\frac{(j-1)!}{(k-1)!}{k\brace j}
                \sum_{m>0}
                \left(\frac{q^{m}}{1-q^{m}}\right)^{j}.
        \end{align*}
        By using \eqref{eq:stirling-second-exp},
        we see that
        \begin{align*}
                \Phi(T) & =
                \sum_{j\ge 1}\sum_{m>0}
                (j-1)! \left(\frac{q^{m}}{1-q^{m}}\right)^{j}
                \frac{d}{dT}\frac{(e^{T}-1)^{j}}{j!}                            \\
                        & =-\sum_{m>0}
                \frac{d}{dT}\log{\left(1-\frac{q^{m}}{1-q^{m}}(e^{T}-1)\right)} \\
                        & =
                \sum_{m>0} \frac{q^{m}e^{T}}{1-q^{m}e^{T}}
                =\sum_{m>0} \frac{e^{T+y}}{1-e^{T+y}}
                \bigg|_{y=m\log{q}}.
        \end{align*}
        Since
        \begin{align*}
                \frac{e^{T+y}}{1-e^{T+y}}=
                e^{T\frac{d}{dy}}\frac{e^{y}}{1-e^{y}}=
                \sum_{k\ge 1}\frac{T^{k-1}}{(k-1)!}\left(\frac{d}{dy}\right)^{k-1}
                \sum_{n\ge 1}e^{ny}=
                \sum_{k \ge 1}\frac{T^{k-1}}{(k-1)!}\sum_{n \ge 1}n^{k-1}e^{ny},
        \end{align*}
        we find that
        \begin{align*}
                \Phi(T)=\sum_{m>0}
                \sum_{k\ge 1}\frac{T^{k-1}}{(k-1)!}
                \sum_{n \ge 1}n^{k-1}q^{mn}=
                \sum_{k\ge 1}\frac{T^{k-1}}{(k-1)!}G_{k}(q).
        \end{align*}
\end{proof}

Proposition \ref{prop:phik-G} implies that
the identity \eqref{eq:Bachmann} is written as
\begin{align*}
        Z_{q}\left(\frac{1}{1-\hbar^{-2}e_{2}X^2}\right)=
        \exp{\left(2
        \sum_{k\ge 1}\frac{(-1)^{k-1}}{(2k)!}Z_{q}(\hbar^{-2k}\phi_{2k})
        (2\arcsin{(X/2)})^{2k}
        \right)}.
\end{align*}
Since the map $Z_{q}$ is a homomorphism with respect to the harmonic product,
the above equality follows from
\begin{align}
        \frac{1}{1-\hbar^{-2}e_{2}X^2}=\exp_{\ast_{\hbar}}{\left(
                2\sum_{k\ge 1}\frac{(-1)^{k-1}}{(2k)!} \hbar^{-2k}\phi_{2k}
                (2\arcsin{(X/2)})^{2k}
                \right)},
        \label{eq:Bachmann3}
\end{align}
which is an identity of formal power series
with coefficients in $\widehat{\mathfrak{H}^{0}}$.
In the next section we prove a general identity which contains \eqref{eq:Bachmann3}
as a special case.

\begin{rem}\label{rem:KMS}
        Let $N$ be a positive integer and $S$ a non-empty subset of $\mathbb{Z}/N\mathbb{Z}$.
        For $m \in \mathbb{Z}$, we write $m \in S$ if $(m \, \mathrm{mod}\, N)\in S$.
        For $\epsilon \in \{ +1, -1\}$ and $r\ge 1$, set
        \begin{align*}
                A_{S, N, \epsilon, r}(q)=
                \sum_{\substack{0<m_{1}<\cdots <m_{r}
                \\ m_{1}, \ldots , m_{r} \in S}}
                \frac{\epsilon^{r}q^{m_{1}+\cdots +m_{r}}}
                {(1-\epsilon q^{m_{1}})^{2}\cdots (1-\epsilon q^{m_{r}})^{2}}.
        \end{align*}
        Kang, Matsusaka and Shin
        \cite{KMS} proves a generalization of \eqref{eq:Bachmann} which states that
        \begin{align}
                1+\sum_{r\ge 1}A_{S, N, \epsilon, r}(q)X^{2r}=
                \exp{\left(2\sum_{k\ge 1}\frac{(-1)^{k-1}}{(2k)!}G_{S, N, \epsilon, 2k}(q)
                (2\arcsin{(X/2)})^{2k}\right)},
                \label{eq:KMS}
        \end{align}
        where $G_{S, N, \epsilon, k}(q)$ is defined by
        \begin{align*}
                G_{S, N, \epsilon, k}(q)
                =\sum_{n\ge 1}\left(
                \sum_{\substack{d\mid n
                \\ n/d \in S}}\epsilon^{d}d^{k-1} \right)
                q^{n}
        \end{align*}
        for $k\ge 1$.
        By setting $N=1$ and $\epsilon=+1$, we recover \eqref{eq:Bachmann}.

        The identity \eqref{eq:KMS} can be derived from \eqref{eq:Bachmann3}
        as follows.
        For $\epsilon \in \{ +1, -1\}$ and $m\ge 1$, set
        \begin{align*}
                I_{q, \epsilon}(e_{1}-g_{1}\,|\, m)=1-q, \qquad
                I_{q, \epsilon}(g_{k}\,|\, m)=
                \left((1-q)\frac{\epsilon q^{m}}{1-\epsilon q^{m}}\right)^{k}
                \quad (k\ge 1).
        \end{align*}
        We define the $\mathcal{C}$-linear map
        $Z_{q, S, N, \epsilon}: \widehat{\mathfrak{H}^{0}} \longrightarrow \mathbb{R}$ by
        $Z_{q, S, N, \epsilon}(1)=1$ and
        \begin{align*}
                Z_{q, S, N, \epsilon}(u_{1}\cdots u_{r})=
                \sum_{\substack{0<m_{1}<\cdots <m_{r}
                \\ m_{1}, \ldots, m_{r} \in S}}
                \prod_{a=1}^{r}I_{q, \epsilon}(u_{a} \,|\, m_{a})
        \end{align*}
        for $u_{1}, \ldots , u_{r}\in \mathcal{A}$ with $u_{r}\not=e_{1}-g_{1}$.
        Then we have
        \begin{align*}
                Z_{q, S, N, \epsilon}(\hbar^{-2r}e_{2}^{r})=
                A_{S, N, \epsilon, r}(q), \quad
                Z_{q, S, N, \epsilon}(\phi_{k})=G_{S, N, \epsilon, k}(q).
        \end{align*}

        It can be checked that
        the relation \eqref{eq:I-hom} with $I_{q}$ replaced by $I_{q, \epsilon}$
        also holds.
        Hence the map
        $Z_{q, S, N, \epsilon}$ is a homomorphism with respect to
        the harmonic product,
        and the identity \eqref{eq:KMS} follows from
        \eqref{eq:Bachmann3}.
\end{rem}


\section{General identity}\label{sec:main}

In this section we prove a general identity
which contains \eqref{eq:Bachmann3} as a special case.
Hereafter we set
\begin{align*}
        \epsilon_{N}=\exp{(2\pi i /N)} \qquad (N \ge 1).
\end{align*}

We set
\begin{align*}
        \mathcal{P}=\left\{P(z) \in \mathbb{Q}[z] \mid P(0)=P(1)=0 \right\}
\end{align*}
and define the $\mathbb{Q}$-linear map
$\psi: \mathcal{P} \to \mathfrak{z}$ by
$\psi(z^{n})=(-\hbar)^{-n}g_{n}$ for $n\ge 1$.
For example, we have
\begin{align*}
        \psi(z^{N-1}(z-1))=(-\hbar)^{-N}g_{N}-(-\hbar)^{-(N-1)}g_{N-1}=(-\hbar)^{-N}e_{N}
        \qquad (N\ge 2).
\end{align*}
The image of the map $\psi$ coincides with the $\mathbb{Q}$-module
spanned by the set $\{\hbar^{-k}e_{k}\}_{k\ge 2}$.
For simplicity of notation, we write $\psi_{P}$ instead of $\psi(P(z))$.

The general identity is given as follows.

\begin{thm}\label{thm:main1}
        Suppose that $P(z)\in \mathcal{P}\setminus\{0\}$ and $L\ge 1$.
        Set $N=\deg{P}\ge 2$.
        Then it holds that
        \begin{align}
                \frac{1}{1+(\psi_{P})^{\circ_{\hbar}L}X^{NL}}=
                \exp_{\ast_{\hbar}}{\left(-\sum_{k\ge 2}\frac{\hbar^{-k}}{k!}\phi_{k}
                        \sum_{m=1}^{NL}\left(\mathop{\mathrm{Log}}{(1-\alpha(\epsilon_{NL}^{m}X))}\right)^{k}
                        \right)},
                \label{eq:main1}
        \end{align}
        where
        $\alpha(X)$ is a convergent power series satisfying
        $\alpha(0)=0$ and
        \begin{align}
                1-x^{N}P(z)=\prod_{m=1}^{N}(1-\alpha(\epsilon_{N}^{m}x)z)
                \label{eq:def-alpha}
        \end{align}
        in a neighborhood of $x=0$.
\end{thm}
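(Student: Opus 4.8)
The plan is to reduce the identity to a computation of logarithmic derivatives in the power series ring $\widehat{\mathfrak{H}^0}[[X]]$ and then to exploit the defining factorization \eqref{eq:def-alpha} together with the generating-function identity behind Proposition \ref{prop:phik-G}. First I would take $-\log$ (ordinary logarithm of a power series with constant term $1$) of both sides: the left side becomes $\log(1+(\psi_P)^{\circ_\hbar L}X^{NL})$, and since $\psi_P\in\mathfrak z$ with $(\psi_P)^{\circ_\hbar j}$ still lying in $\mathfrak z$ (because $g_k\circ_\hbar g_l=g_{k+l}$), all powers of $(\psi_P)^{\circ_\hbar L}X^{NL}$ with respect to $\ast_\hbar$ reduce to $\circ_\hbar$-powers times the scalar $X$-monomials — so $\log_\ast$ and $\exp_\ast$ of a single letter behave like their ordinary counterparts. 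Concretely $\exp_{\ast_\hbar}$ applied to an exponent that is a $\mathcal C[[X]]$-linear combination of the single elements $g_j$ is governed by the same Stirling-number combinatorics as in Proposition \ref{prop:phik-G}. The goal is therefore to show
\begin{align*}
        \log\!\bigl(1+(\psi_P)^{\circ_\hbar L}X^{NL}\bigr)
        =-\sum_{k\ge 2}\frac{\hbar^{-k}}{k!}\phi_k
        \sum_{m=1}^{NL}\bigl(\mathrm{Log}(1-\alpha(\epsilon_{NL}^m X))\bigr)^k
\end{align*}
as an identity in $\mathfrak z[[X]]$, where on the right $\phi_k=\sum_{j=1}^k (j-1)!\,{k\brace j}\hbar^{k-j}g_j$ contributes, for each fixed $j$, the coefficient of $g_j$.

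Next I would isolate the coefficient of $g_j$ on both sides. On the right, using $\hbar^{-k}\phi_k=\sum_{j\le k}(j-1)!{k\brace j}\hbar^{-j}g_j$ and summing over $k$, the coefficient of $\hbar^{-j}g_j$ is $-\sum_{k\ge j}\frac{(j-1)!}{k!}{k\brace j}\bigl(\sum_m (\mathrm{Log}(1-\alpha(\epsilon_{NL}^mX)))^k\bigr)$; by the exponential generating function $\sum_{k\ge j}\frac{1}{k!}{k\brace j}t^k=\frac{(e^t-1)^j}{j!}$ (equation \eqref{eq:stirling-second-exp}), this sum equals $-\frac1j\sum_{m=1}^{NL}\bigl(1-(1-\alpha(\epsilon_{NL}^mX))\bigr)^j\cdot(-1)^{?}$ — more precisely, with $t=\mathrm{Log}(1-\alpha(\cdot))$ one gets $e^t-1=-\alpha(\cdot)$, so the inner sum is $\frac{(j-1)!}{j!}\sum_m(-\alpha(\epsilon_{NL}^m X))^j=\frac{(-1)^j}{j}\sum_m \alpha(\epsilon_{NL}^mX)^j$. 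Hence the coefficient of $\hbar^{-j}g_j$ on the right is $\frac{(-1)^{j-1}}{j}\sum_{m=1}^{NL}\alpha(\epsilon_{NL}^mX)^j$. On the left, $\psi_P=\sum_{n\ge1}c_n(-\hbar)^{-n}g_n$ where $P(z)=\sum c_n z^n$; writing $(\psi_P)^{\circ_\hbar L}$ via $g_n\circ_\hbar\cdots$ one gets $(\psi_P)^{\circ_\hbar L}=\sum_{j}\bigl([z^j]P(z)^L\bigr)(-\hbar)^{-j}g_j$, so $\log(1+(\psi_P)^{\circ_\hbar L}X^{NL})$ has coefficient of $\hbar^{-j}g_j$ equal to the coefficient of $(-1)^j z^j$ in $\sum_{\nu\ge1}\frac{(-1)^{\nu-1}}{\nu}(P(z)^L X^{NL})^\nu=\log(1+P(z)^LX^{NL})$ after the substitution bookkeeping; equivalently it is $[z^j]$ applied to a suitable series. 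So the whole theorem reduces to the scalar (commutative) identity
\begin{align*}
        \log\!\bigl(1+P(z)^L x^{NL}\bigr)=
        \sum_{j\ge1}\frac{(-1)^{j-1}}{j}z^j\!\!\sum_{m=1}^{NL}\alpha(\epsilon_{NL}^m x)^j
        =-\sum_{m=1}^{NL}\mathrm{Log}\bigl(1-z\,\alpha(\epsilon_{NL}^m x)\bigr),
\end{align*}
i.e. $1+P(z)^Lx^{NL}=\prod_{m=1}^{NL}\bigl(1-z\,\alpha(\epsilon_{NL}^m x)\bigr)$, which I would derive from \eqref{eq:def-alpha} by the elementary substitution trick: replacing $x$ by $\epsilon_{NL}^m x$ in $1-x^N P(z)=\prod_{a=1}^N(1-z\alpha(\epsilon_N^a x))$ and multiplying over a set of representatives $m$ — using $(-1)^{?}$ and the fact that $y^{NL}-1$ (with $y=$ an appropriate $N$-th power of $x$ times a root) factors through $NL$-th roots of unity — reorganizes $\prod_{m=1}^{NL}(1-z\alpha(\epsilon_{NL}^mx))$ as $1-(x^N)^L P(z)^L$ up to sign, matching the left side.

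The main obstacle I expect is twofold: first, justifying carefully that $\exp_{\ast_\hbar}$ and $\log$ interact with elements of $\mathfrak z$ (single letters) exactly as the commutative exponential/logarithm composed with the Stirling generating functions — this is where \eqref{eq:stirling-second-exp} and the structure $g_k\circ_\hbar g_l=g_{k+l}$ do the real work, and one must track the $\hbar$-weights so that $\hbar^{k-j}$ in $\phi_k$ cancels correctly against $\hbar^{-k}$ to leave $\hbar^{-j}$; second, the clean combinatorial reorganization of the $NL$-fold product of $(1-z\alpha(\epsilon_{NL}^m x))$ into $1+P(z)^Lx^{NL}$, which requires knowing that $\alpha$ — characterized by being the reciprocal root of $1-x^NP(z)$ in $z$ near $x=0$ — is well-defined, convergent, vanishes at $0$, and that the $NL$ numbers $\{\alpha(\epsilon_{NL}^m x)\}$ are precisely the reciprocal roots of $1-x^{NL}P(z)^L$ (equivalently, the $NL$-th roots of the appropriate quantity). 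I would handle the first point by a direct coefficient extraction as sketched, and the second by grouping the index $m\in\{1,\dots,NL\}$ according to its residue mod $N$, applying \eqref{eq:def-alpha} with $x\mapsto \epsilon_{NL}^m x$ within each residue class, and invoking the identity $\prod_{\zeta^L=1}(1-\zeta\,w)=1-w^L$ to collapse the product. Convergence and the fact that $\alpha(0)=0$ (so that all the logarithms and the exponential are well-defined as power series in $X$) follow from the implicit function theorem applied to $1-x^NP(z)=0$ at $(x,z)=(0,0)$; I would state this and defer the finer properties of $\alpha$ needed later to Appendix \ref{sec:app2}.
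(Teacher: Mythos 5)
Your overall strategy is sound and, where it differs from the paper, arguably cleaner --- but there is one genuine gap at the very first step. You justify passing to the exponents by claiming that ``all powers of $(\psi_P)^{\circ_{\hbar}L}X^{NL}$ with respect to $\ast_{\hbar}$ reduce to $\circ_{\hbar}$-powers.'' That is false: for a single letter $u\in\mathfrak{z}$ one has $u\ast_{\hbar}u=2uu+u\circ_{\hbar}u$, where $uu$ is the concatenation product, so $\ast_{\hbar}$-powers of $u$ do not stay in $\mathfrak{z}$. What you actually need is the nontrivial quasi-shuffle identity
\begin{align*}
\frac{1}{1-uX}=\exp_{\ast_{\hbar}}\Bigl(\sum_{n\ge 1}\frac{(-1)^{n-1}}{n}u^{\circ_{\hbar}n}X^{n}\Bigr),
\end{align*}
i.e.\ the Newton-type relation between the geometric series in the concatenation powers $u^{n}$ and the $\exp_{\ast_{\hbar}}$ of the $\circ_{\hbar}$-power sums; the paper quotes this as Proposition \ref{prop:exp-ast} from Hoffman--Ihara. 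With that in hand, your reduction to an identity in $\mathfrak{z}[[X]]$ (by injectivity of $\exp_{\ast_{\hbar}}$ on $X\widehat{\mathfrak{H}^{1}}[[X]]$) is legitimate, but as written your argument for it does not hold up. Two smaller points: your displayed scalar identity should read $1-x^{NL}P(z)^{L}=\prod_{m=1}^{NL}(1-\alpha(\epsilon_{NL}^{m}x)z)$ (the sign you leave as ``$(-1)^{?}$'' is absorbed by $\psi(z^{j})=(-\hbar)^{-j}g_{j}$, i.e.\ by $z\mapsto -z$), and for $j=1$ your resummation over $k\ge j$ includes a $k=1$ term absent from \eqref{eq:main1}; that term is $-\hbar^{-1}g_{1}\sum_{m}\mathrm{Log}(1-\alpha(\epsilon_{NL}^{m}X))$, which vanishes because $\prod_{m}(1-\alpha(\epsilon_{NL}^{m}X))=1$ by $P(1)=0$ --- this needs to be said.

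Granting the Hoffman--Ihara input, your route is genuinely different from the paper's and avoids its heaviest machinery. The paper first reduces $L\ge 2$ to $L=1$ by taking harmonic products of geometric series (Proposition \ref{prop:frac-prod-ast}, Corollary \ref{cor:cyclotomic-prod}), and then converts the left-hand exponent from the $g$-basis to the $\phi$-basis: this direction requires Stirling numbers of the first kind, an integral representation for ${l\brack k}$ (Lemma \ref{lem:stirring-integral}), a residue computation using the factorization \eqref{eq:def-alpha}, and a differential-equation argument identifying $F_{k}(x)$ with $\frac{1}{k}\sum_{m}(-\mathrm{Log}(1-\alpha(\epsilon_{N}^{m}x)))^{k}$. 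You instead handle general $L$ at once via $1-X^{NL}P(z)^{L}=\prod_{a=1}^{L}(1-\epsilon_{L}^{a}X^{N}P(z))$ and convert the \emph{right}-hand exponent from the $\phi$-basis back to the $g$-basis, where \eqref{eq:stirling-second-exp} with $t=\mathrm{Log}(1-\alpha)$ telescopes instantly because $e^{t}-1=-\alpha$; the whole theorem then collapses to the logarithm of \eqref{eq:def-alpha}. This trades the paper's analytic computation for a purely formal one, at the cost of making the Hoffman--Ihara exponential formula carry all the algebraic weight --- which is exactly the step you have not justified.
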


\begin{rem}\label{rem:existence-alpha}
        We see the existence of such a power series $\alpha(X)$ as follows.
        Set $R(z)=z^{N}P(1/z)$. Since $R(0)\not=0$,
        we can take a holomorphic branch of $R(z)^{1/N}$
        in a neighborhood of $z=0$.
        Then the function $f(x, z)=z-x\,R(z)^{1/N}$
        is holomorphic in a neighborhood of $(x, z)=(0, 0)$
        and $\partial f/\partial z$ does not vanish at $(0, 0)$.
        Hence there exists a unique holomorphic function $z=\alpha(x)$ satisfying
        $\alpha(0)=0$ and $f(x, \alpha(x))=0$ in a neighborhood of $x=0$.
                {}From the definition of $f$, it holds that $\alpha(x)^{N}=x^{N}R(\alpha(x))$,
        and hence $z=1/\alpha(x)$ is a root of the polynomial $1-x^{N}P(z)$.
        Although the function $\alpha(x)$ depends on the choice of the branch of $R(z)^{1/N}$,
        the set $\{\alpha(\epsilon_{N}^{m}x)\}_{m=1}^{N}$ does not
        and its elements are distinct around $x=0$.
        Therefore we have the factorization \eqref{eq:def-alpha}.
\end{rem}

\begin{rem}
        The logarithms part of the right-hand side of \eqref{eq:main1} can be
        calculated algebraically from $P(z)$ in principle as follows.
        It follows from \eqref{eq:stirling-first-log} that
        \begin{align}
                \sum_{m=1}^{NL}(\mathop{\mathrm{Log}}(1-\alpha(\epsilon_{NL}^{m}X)))^{k}=(-1)^{k}
                \sum_{s \ge k}\frac{k!}{s!}
                {s \brack k} 
                \sum_{m=1}^{NL}(\alpha(\epsilon_{NL}^{m}X))^{s}.
                \label{eq:rem-log-sum}
        \end{align}
        On the other hand, from \eqref{eq:def-alpha}, we have
        \begin{align*}
                1-X^{NL}(P(z))^{L}=\prod_{a=1}^{L}(1-\epsilon_{L}^{a}X^{N}P(z))=
                \prod_{m=1}^{NL}(1-\alpha(\epsilon_{NL}^{m}X)z).
        \end{align*}
        By taking the logarithmic derivative with respect to $z$, we see that
        \begin{align*}
                \frac{LX^{NL}P'(z)(P(z))^{L-1}}{1-X^{NL}(P(z))^{L}}=\sum_{m=1}^{NL}
                \frac{\alpha(\epsilon_{NL}^{m}X)}{1-\alpha(\epsilon_{NL}^{m}X)z}=
                \sum_{s\ge 1}z^{s-1}\sum_{m=1}^{NL}(\alpha(\epsilon_{NL}^{m}X))^{s}.
        \end{align*}
        Hence we can write the power sum
        $\sum_{m=1}^{NL}(\alpha(\epsilon_{NL}^{m}X))^{s}$ for $s\ge 1$
        as a power series in $X^{NL}$ with rational coefficients
        by expanding the left-hand side at $z=0$.
        Thus we obtain a formula for the logarithmic part \eqref{eq:rem-log-sum}.
        Note that the logarithmic part becomes zero when $k=1$
        because $\prod_{m=1}^{NL}(1-\alpha(\epsilon_{NL}^{m}X))=1$
        under the assumption that $P(1)=0$.
\end{rem}

Now we start to prove Theorem \ref{thm:main1}.
We first show that the desired equality \eqref{eq:main1} with $L\ge 2$
follows from that with $L=1$.

\begin{prop}\label{prop:frac-prod-ast}
        For any $f(X), g(X) \in \mathfrak{z}[[X]]$, it holds that
        \begin{align}
                \frac{1}{1+Xf(X)}\ast_{\hbar}\frac{1}{1+Xg(X)}=
                \frac{1}{1+\left\{(f(X)+g(X))X-(f(X)\circ_{\hbar}g(X))X^{2}\right\}}
                \label{eq:frac-prod-ast}
        \end{align}
        in the $\mathcal{C}$-module $\mathfrak{z}[[X]]$.
\end{prop}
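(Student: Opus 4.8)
The plan is to read $\frac{1}{1+Xf(X)}$ as the two-sided inverse of $1+Xf(X)$ for the concatenation product, i.e.\ as $\sum_{n\ge 0}(-Xf(X))^{n}\in\widehat{\mathfrak{H}^{1}}[[X]]$ (well defined since $Xf(X)$ has no constant term), and likewise for $g$; the phrase ``in the $\mathcal{C}$-module $\mathfrak{z}[[X]]$'' then records the fact that the product is again of the form $\bigl(1+X\cdot(\,\cdot\,)\bigr)^{-1}$ with second argument $f+g-X(f\circ_{\hbar}g)\in\mathfrak{z}[[X]]$. Writing $A=\tfrac1{1+Xf}$, $B=\tfrac1{1+Xg}$ and $P=A\ast_{\hbar}B$, the strategy is to use only properties (1) and (2) of $\ast_{\hbar}$ to derive the single relation $P\bigl(1+(f+g)X-(f\circ_{\hbar}g)X^{2}\bigr)=1$, which pins $P$ down because $1+(f+g)X-(f\circ_{\hbar}g)X^{2}$ has invertible constant term and so is a unit in $\widehat{\mathfrak{H}^{1}}[[X]]$.

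The two facts I would record at the outset are $A(Xf)=1-A$ and $B(Xg)=1-B$, coming from $A(1+Xf)=1$ and $B(1+Xg)=1$; note $Xf,Xg\in X\mathfrak{z}[[X]]$, so $A(Xf)$ and $B(Xg)$ are $\mathcal{C}[[X]]$-linear combinations of words ending in a letter of $\mathcal{A}$. The technical point on which everything rests is the extension of property (2) allowing the trailing letters to be elements of $\mathfrak{z}[[X]]$: for $w,w'\in\widehat{\mathfrak{H}^{1}}[[X]]$ and $p,p'\in\mathfrak{z}[[X]]$,
\[
 (wp)\ast_{\hbar}(w'p')=\bigl(w\ast_{\hbar}(w'p')\bigr)p+\bigl((wp)\ast_{\hbar}w'\bigr)p'+(w\ast_{\hbar}w')(p\circ_{\hbar}p') .
\]
This follows from (2) by $\mathcal{C}$-bilinearity of $\ast_{\hbar}$, $\circ_{\hbar}$ and concatenation together with expanding $p,p'$ in the spanning set $\mathcal{A}$ and in powers of $X$ (the extension of $\ast_{\hbar}$ to $[[X]]$ respects the $X$-grading). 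I expect this bookkeeping---and keeping the three products concatenation, $\ast_{\hbar}$ and $\circ_{\hbar}$ distinct throughout---to be the main obstacle; the remaining manipulation is purely formal.

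With that in place I would compute
\[
 P=\bigl(1-A(Xf)\bigr)\ast_{\hbar}\bigl(1-B(Xg)\bigr)=1-B(Xg)-A(Xf)+\bigl(A(Xf)\bigr)\ast_{\hbar}\bigl(B(Xg)\bigr),
\]
where the cross terms are evaluated by property (1), and then apply the extended property (2) to the last term with $(w,p)=(A,Xf)$ and $(w',p')=(B,Xg)$. Using $A\ast_{\hbar}(B(Xg))=A\ast_{\hbar}(1-B)=A-P$, $(A(Xf))\ast_{\hbar}B=(1-A)\ast_{\hbar}B=B-P$ and $(Xf)\circ_{\hbar}(Xg)=X^{2}(f\circ_{\hbar}g)$, substituting $A(Xf)=1-A$ and $B(Xg)=1-B$, and cancelling, all the auxiliary terms collapse and one is left precisely with $P+P(Xf)+P(Xg)-PX^{2}(f\circ_{\hbar}g)=1$. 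This is the relation claimed above, and inverting the unital factor on the right gives \eqref{eq:frac-prod-ast}. Equivalently, one may just verify that the stated right-hand side satisfies the same recursion $P=1-P\bigl(Xf+Xg-X^{2}(f\circ_{\hbar}g)\bigr)$ and invoke uniqueness of the concatenation inverse.
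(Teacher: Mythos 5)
Your proposal is correct and follows essentially the same route as the paper: both decompose $\tfrac{1}{1+Xf}=1-\tfrac{1}{1+Xf}(Xf)$, apply the bilinear extension of the harmonic-product recursion to the cross term, and collapse everything to the single relation $P\bigl(1+(f+g)X-(f\circ_{\hbar}g)X^{2}\bigr)=1$. The only difference is cosmetic: you make explicit the bilinearity argument extending property (2) to trailing factors in $\mathfrak{z}[[X]]$ and the invertibility of the unital factor, both of which the paper uses tacitly.
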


\begin{proof}
        Denote by $I$ the left-hand side.
        We see
        \begin{align*}
                I & =\left(1-\frac{1}{1+Xf(X)}Xf(X)\right)\ast_{\hbar}
                \left(1-\frac{1}{1+Xg(X)}Xg(X)\right)                  \\
                  & =1-\frac{Xf(X)}{1+Xf(X)}-\frac{Xg(X)}{1+Xg(X)}+
                \frac{X}{1+Xf(X)}f(X)\ast_{\hbar} \frac{X}{1+Xg(X)}g(X)
        \end{align*}
        and
        \begin{align*}
                 &
                \frac{X}{1+Xf(X)}f(X)\ast_{\hbar} \frac{X}{1+Xg(X)}g(X)                  \\
                 & =\left(\frac{X}{1+Xf(X)}\ast_{\hbar}\frac{Xg(X)}{1+Xg(X)}\right)f(X)+
                \left(\frac{Xf(X)}{1+Xf(X)}\ast_{\hbar}\frac{X}{1+Xg(X)}\right)g(X)      \\
                 & \quad {}+
                X^{2}I(f(X)\circ_{\hbar}g(X))                                            \\
                 & =X\left(\frac{1}{1+Xf(X)}-I\right)f(X)+
                X\left(\frac{1}{1+Xg(X)}-I\right)g(X)+
                X^{2}I(f(X)\circ_{\hbar}g(X)).
        \end{align*}
        Thus we find
        \begin{align*}
                I=1-I\left\{(f(X)+g(X))X-(f(X)\circ_{\hbar}g(X))X^{2}\right\}.
        \end{align*}
        This completes the proof.
\end{proof}

Set $\mathcal{C}_{\mathbb{C}}=\mathbb{C}[\hbar, \hbar^{-1}]=
        \mathbb{C}\otimes_{\mathbb{Q}}\mathcal{C}$.
We extend the coefficient ring $\mathcal{C}$ of
$\mathfrak{z}$ and $\widehat{\mathfrak{H}^{1}}$
to $\mathcal{C}_{\mathbb{C}}$.
Then the equality \eqref{eq:frac-prod-ast} still holds for any
power series $f(X)$ and $g(X)$ with coefficients in
$\mathfrak{z}_{\mathbb{C}}=\mathcal{C}_{\mathbb{C}}\otimes_{\mathcal{C}}\mathfrak{z}$.

\begin{cor}\label{cor:cyclotomic-prod}
        For any $u \in \mathfrak{z}_{\mathbb{C}}$ and $L\ge 1$, it holds that
        \begin{align*}
                \frac{1}{1+uX} \ast_{\hbar} \frac{1}{1+\epsilon_{L}uX} \ast_{\hbar}
                \cdots \ast_{\hbar} \frac{1}{1+\epsilon_{L}^{L-1}uX}=
                \frac{1}{1+u^{\circ_{\hbar} L}X^{L}}.
        \end{align*}
\end{cor}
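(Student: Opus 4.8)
The plan is to prove Corollary~\ref{cor:cyclotomic-prod} by induction on $L$, using Proposition~\ref{prop:frac-prod-ast} as the engine. For $L=1$ the statement is trivial. For the inductive step, the key observation is that the factors $1/(1+\epsilon_L^j uX)$ for $j=0,\ldots,L-1$ are exactly $1/(1+vX)$ where $v$ ranges over the $L$-th roots of $u^{\circ_\hbar L}$ scaled appropriately; more precisely, I want to first establish the "merging" identity
\begin{align*}
        \frac{1}{1+uX}\ast_{\hbar}\frac{1}{1-uX}=\frac{1}{1+u^{\circ_{\hbar}2}X^{2}},
\end{align*}
which is the $L=2$ case and follows immediately from Proposition~\ref{prop:frac-prod-ast} with $f(X)=u$, $g(X)=-u$, since then $f+g=0$ and $f\circ_\hbar g=-u^{\circ_\hbar 2}$.

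For general $L$, I would group the product cyclically. Writing $\omega=\epsilon_L$, consider the partial products obtained by pairing up terms with exponents $j$ and $L-j$ (for $L$ even) or by a more careful recursive grouping. Actually the cleanest route: prove the slightly more general claim that for any $M\mid L$ and any $u$,
\begin{align*}
        \prod_{j=0}^{L-1}{}^{\ast_{\hbar}}\frac{1}{1+\epsilon_{L}^{j}uX}
        =\prod_{i=0}^{M-1}{}^{\ast_{\hbar}}\left(\prod_{a=0}^{L/M-1}{}^{\ast_{\hbar}}\frac{1}{1+\epsilon_{L}^{i+Ma}uX}\right),
\end{align*}
and evaluate the inner product using the identity for $L/M$ terms. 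To make the induction go through directly, I would induct on $L$: assume the result for all values below $L$, pick a prime divisor $p$ of $L$, write $L=pL'$, and split the $L$ factors into $p$ blocks indexed by residues mod $p$. Each block, after factoring out a root of unity, is a product of $L'$ consecutive-power terms in the variable $X$ with base an appropriate root of unity times $u$; by the inductive hypothesis each block collapses to $1/(1+w^{\circ_\hbar L'}X^{L'})$ for the relevant base $w$. One then has a product of $p$ such terms whose bases are $\epsilon_p^i u^{\circ_\hbar L'}$ (up to reindexing $X^{L'}$ as a new variable), and applying the result for $p$ factors finishes it.

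The main obstacle I expect is bookkeeping with the roots of unity: one must check that $\{\epsilon_L^{i+Ma}\}_{a}$, after pulling out $\epsilon_L^i$, really does form the full set of $(L/M)$-th roots of unity $\{\epsilon_{L/M}^a\}_a$, and that the substitution $Y=\epsilon_L^i X^{?}$ lines everything up so that Proposition~\ref{prop:frac-prod-ast} (or its $L'$-fold iterate) applies verbatim in $\mathfrak{z}_{\mathbb{C}}[[X]]$ rather than in a single variable. A cleaner way to sidestep most of this: induct purely by peeling off one factor at a time. That is, show by induction on $L$ that
\begin{align*}
        \frac{1}{1+uX}\ast_{\hbar}\cdots\ast_{\hbar}\frac{1}{1+\epsilon_L^{L-1}uX}
        =\frac{1}{1+Q_L(u,X)},
\end{align*}
where $Q_L(u,X)=u^{\circ_\hbar L}X^L$, by repeatedly applying \eqref{eq:frac-prod-ast}. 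The issue there is that the intermediate products are not of the simple single-term form $1/(1+wX)$, so \eqref{eq:frac-prod-ast} must be used in its general form with $f(X),g(X)$ genuine power series; this requires tracking that after merging the first $k$ factors one gets $1/(1+(\epsilon_L^0\cdots\epsilon_L^{k-1}\text{-symmetric function of }u)X^{?})$—which is messy. So I favor the prime-factorization grouping above: it reduces everything to two clean cases, the base identity $1/(1+uX)\ast_\hbar 1/(1-uX)=1/(1+u^{\circ_\hbar 2}X^2)$ and, more generally, the $p$-fold version for prime $p$, each proved by a direct induction-free computation from Proposition~\ref{prop:frac-prod-ast} together with the Newton-type identity $\sum_{j}\epsilon_p^j=0$ and $\prod$-type relations among the $\epsilon_p^j$. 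The only real work is verifying that the symmetric combinations $(f+g)X-(f\circ_\hbar g)X^2$ telescope correctly under the cyclic symmetry, which is a short computation using associativity and commutativity of $\circ_\hbar$ and the vanishing of elementary symmetric functions of roots of unity below the top degree.
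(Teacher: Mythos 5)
Your plan can be made to work, but the one step you leave as ``a short computation'' for prime $p$ is in fact the entire content of the corollary, and the route you reject as messy is precisely the paper's proof --- and it is clean. The paper applies Proposition \ref{prop:frac-prod-ast} repeatedly, peeling off one factor at a time; the point you miss is that the intermediate partial products have an explicit closed form,
\begin{align*}
        \frac{1}{1+uX}\ast_{\hbar}\cdots\ast_{\hbar}\frac{1}{1+\epsilon_{L}^{k-1}uX}
        =\left(1+\sum_{a=1}^{k}(-1)^{a-1}e_{a}(1,\epsilon_{L},\ldots,\epsilon_{L}^{k-1})\,
        u^{\circ_{\hbar}a}X^{a}\right)^{-1},
\end{align*}
where $e_{a}$ is the elementary symmetric polynomial. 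The induction on $k$ is one line: write the partial product as $1/(1+Xf(X))$ and the next factor as $1/(1+Xg(X))$ with $g(X)=\epsilon_{L}^{k}u$; then the combination $(f+g)X-(f\circ_{\hbar}g)X^{2}$ from \eqref{eq:frac-prod-ast} reproduces exactly the Pascal-type recursion $e_{a}(t_{1},\ldots,t_{k+1})=e_{a}(t_{1},\ldots,t_{k})+t_{k+1}e_{a-1}(t_{1},\ldots,t_{k})$. Taking $k=L$ and using $e_{a}(1,\epsilon_{L},\ldots,\epsilon_{L}^{L-1})=(-1)^{L-1}\delta_{a,L}$ (the roots of $z^{L}-1$) gives the result; note the top coefficient is $(-1)^{L-1}\cdot(-1)^{L-1}=1$, as needed.

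Your prime-factorization regrouping is legitimate as far as it goes: commutativity and associativity of $\ast_{\hbar}$ justify reordering the factors, $\epsilon_{L}^{pa}=\epsilon_{L'}^{a}$ and $(\epsilon_{L}^{i})^{\circ_{\hbar}\text{-power}}$ bookkeeping works out to give blocks $1/(1+\epsilon_{p}^{i}u^{\circ_{\hbar}L'}X^{L'})$, and substituting $Y=X^{L'}$ is compatible with $\ast_{\hbar}$ since the product is defined coefficientwise. But this only reduces the general case to the case of prime $L$, and for prime $L$ you must still run the iterated application of Proposition \ref{prop:frac-prod-ast} displayed above --- primality plays no role whatsoever in that computation, so the reduction buys nothing and costs you the extra verifications you yourself flag as the ``main obstacle.'' As written, your proposal only carries out the computation for $L=2$ and asserts the $p$-fold case; until you supply the induction above (or its prime special case, which is identical in difficulty), the proof is incomplete.
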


\begin{proof}
        By using Proposition \ref{prop:frac-prod-ast} repeatedly,
        we see that the left-hand side is equal to
        \begin{align*}
                \left(1+\sum_{a=1}^{L}(-1)^{a-1}
                e_{a}(1, \epsilon_{L}, \ldots, \epsilon_{L}^{L-1})
                u^{\circ_{\hbar} a}X^{a} \right)^{-1},
        \end{align*}
        where
        $e_{a}(t_{1}, \ldots , t_{L})=\sum_{1\le j_{1}<\cdots <j_{a}\le L}t_{j_{1}}\cdots t_{j_{a}}$
        is the elementary symmetric polynomial.
        The desired equality follows from
        $        e_{a}(1, \epsilon_{L}, \ldots , \epsilon_{L}^{L-1})=(-1)^{L-1}\delta_{a, L}$
        for $1\le a \le L$.
\end{proof}

Now suppose that \eqref{eq:main1} holds for $L=1$.
By using Corollary \ref{cor:cyclotomic-prod}, we see that
\begin{align*}
         &
        \frac{1}{1+(\psi_{P})^{\circ_{\hbar}L}X^{NL}}  \\
         & =\frac{1}{1+\psi_{P}X^{N}}\ast_{\hbar}
        \frac{1}{1+\psi_{P}(\epsilon_{NL}X)^{N}}\ast_{\hbar}
        \frac{1}{1+\psi_{P}(\epsilon_{NL}^{2}X)^{N}}\ast_{\hbar} \cdots
        \ast_{\hbar}
        \frac{1}{1+\psi_{P}(\epsilon_{NL}^{L-1}X)^{N}} \\
         & =
        \exp_{\ast_{\hbar}}{\left(
                -\sum_{a=0}^{L-1}\sum_{k\ge 2}\frac{\hbar^{-k}}{k!}\phi_{k}
                \sum_{m=1}^{N}\left(\mathop{\mathrm{Log}}{(1-\alpha(\epsilon_{N}^{m}\epsilon_{NL}^{a}X))}\right)^{k}
                \right)}.
\end{align*}
It is equal to the right-hand side of \eqref{eq:main1} because
$\{\epsilon_{N}^{m}\epsilon_{NL}^{a}\mid 1\le m \le N, \, 0\le a\le L-1\}=
        \{\epsilon_{NL}^{m}\mid 1\le m \le NL\}$.
Thus it suffices to show the desired identity \eqref{eq:main1}
in the case of $L=1$.

To prove the identity \eqref{eq:main1} with $L=1$,
we use the following formula.

\begin{prop}\label{prop:exp-ast}
        \cite[Corollary 5.1]{HI}\,
        For any $u \in \mathfrak{z}$, it holds that
        \begin{align*}
                \frac{1}{1-uX}=
                \exp_{\ast_{\hbar}}{\left(
                        \sum_{n\ge 1}\frac{(-1)^{n-1}}{n}u^{\circ_{\hbar}n}X^{n}
                        \right)}
        \end{align*}
        in the formal power series ring $\widehat{\mathfrak{H}^{1}}[[X]]$.
\end{prop}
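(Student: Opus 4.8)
The plan is to recognise both sides as \emph{group-like} power series for the deconcatenation coproduct $\Delta$ on $\widehat{\mathfrak{H}^{1}}$, given on monomials by $\Delta(u_{1}\cdots u_{r})=\sum_{i=0}^{r}(u_{1}\cdots u_{i})\otimes(u_{i+1}\cdots u_{r})$, and then to identify them by comparing only their length-one parts. Write $\frac{1}{1-uX}=\sum_{r\ge 0}u^{r}X^{r}$, where $u^{r}$ denotes the $r$-fold \emph{concatenation} product. I would first record two standard facts: (a) $(\widehat{\mathfrak{H}^{1}},\ast_{\hbar},\Delta)$ is a bialgebra, which follows by a routine induction on total length from the two defining properties of $\ast_{\hbar}$ in Section~\ref{sec:harmonic-product}; and (b) an element of $\widehat{\mathfrak{H}^{1}}$ with vanishing constant term is $\Delta$-primitive if and only if it lies in $\mathfrak{z}$, which one sees by expanding in the monomial basis coming from $\mathcal{A}$ and observing that the reduced coproduct of a length-$k$ monomial contributes the tensor $(\text{first letter})\otimes(\text{remaining length-}(k-1)\text{ word})$, and that these are linearly independent for distinct length-$k$ monomials with $k\ge 2$. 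Granting (a) and (b): since $\Delta(u^{r})=\sum_{i+j=r}u^{i}\otimes u^{j}$, the series $\frac{1}{1-uX}$ is group-like in $\widehat{\mathfrak{H}^{1}}[[X]]$; and since each $u^{\circ_{\hbar}n}$ lies in $\mathfrak{z}$, hence is primitive, the exponent $\sum_{n\ge 1}\frac{(-1)^{n-1}}{n}u^{\circ_{\hbar}n}X^{n}$ is primitive, so the bialgebra structure makes $\exp_{\ast_{\hbar}}$ of it group-like, with constant term $1$.

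Next I would show that two group-like series $\sum_{n\ge 0}a_{n}X^{n}$, $\sum_{n\ge 0}a'_{n}X^{n}$ with $a_{0}=a'_{0}=1$ and $a_{n},a'_{n}$ of vanishing constant term for $n\ge 1$ agree as soon as $\pi_{1}(a_{n})=\pi_{1}(a'_{n})$ for all $n$, where $\pi_{1}\colon\widehat{\mathfrak{H}^{1}}\to\mathfrak{z}$ is the projection onto the length-one part. Indeed, by induction on $n$ the reduced coproduct of $a_{n}-a'_{n}$ equals $\sum_{0<i<n}\big(a_{i}\otimes a_{n-i}-a'_{i}\otimes a'_{n-i}\big)=0$, so $a_{n}-a'_{n}$ is primitive, hence lies in $\mathfrak{z}$ and equals $\pi_{1}(a_{n}-a'_{n})=0$. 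It remains to compute length-one parts. On the left, $\pi_{1}(u^{r})$ is $u$ for $r=1$ and $0$ for $r\ge 2$. On the right, the $X^{n}$-coefficient of $\exp_{\ast_{\hbar}}\big(\sum_{k\ge 1}\frac{(-1)^{k-1}}{k}u^{\circ_{\hbar}k}X^{k}\big)$ is a $\mathbb{Q}$-linear combination of harmonic products $u^{\circ_{\hbar}n_{1}}\ast_{\hbar}\cdots\ast_{\hbar}u^{\circ_{\hbar}n_{m}}$, and the combinatorial engine of the argument is the identity
\begin{align*}
        \pi_{1}\big(\ell_{1}\ast_{\hbar}\cdots\ast_{\hbar}\ell_{m}\big)=\ell_{1}\circ_{\hbar}\cdots\circ_{\hbar}\ell_{m}\qquad(\ell_{1},\ldots,\ell_{m}\in\mathfrak{z}),
\end{align*}
which I would prove by induction on $m$ from the explicit expansion of $w\ast_{\hbar}\ell$ for a monomial $w$ and a letter $\ell$ (namely, insert $\ell$ into each gap of $w$, or merge $\ell$ via $\circ_{\hbar}$ with one letter of $w$): the length-one part of $w\ast_{\hbar}\ell$ is $\ell$ if $w=1$, is $(\text{the letter of }w)\circ_{\hbar}\ell$ if $w$ is a single letter, and vanishes otherwise.

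Feeding this in, and using $u^{\circ_{\hbar}n_{1}}\circ_{\hbar}\cdots\circ_{\hbar}u^{\circ_{\hbar}n_{m}}=u^{\circ_{\hbar}n}$ when $n_{1}+\cdots+n_{m}=n$, the length-one part of the $X^{n}$-coefficient on the right equals
\begin{align*}
        \left(\sum_{m\ge 1}\frac{1}{m!}\sum_{\substack{n_{1}+\cdots+n_{m}=n\\ n_{i}\ge 1}}\ \prod_{i=1}^{m}\frac{(-1)^{n_{i}-1}}{n_{i}}\right)u^{\circ_{\hbar}n},
\end{align*}
and the scalar in parentheses equals the coefficient of $y^{n}$ in $\sum_{m\ge 1}\frac{1}{m!}\big(\log(1+y)\big)^{m}=\exp\big(\log(1+y)\big)-1=y$, which is $1$ for $n=1$ and $0$ for $n\ge 2$, matching the left-hand side. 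Hence the two group-like series coincide, which is the assertion. I expect the main obstacle to be organisational rather than mathematical: keeping the bialgebra and group-like formalism honest over the ground ring $\mathcal{C}=\mathbb{Q}[\hbar,\hbar^{-1}]$ and in the $X$-adically complete setting, so that $\exp_{\ast_{\hbar}}$, the formal $\log$, ``group-like'' and ``primitive'' all have literal meaning; the lemma on $\pi_{1}$ of harmonic products, although it does the real work, is a short induction once the recursion for $\ast_{\hbar}$ is unwound. Alternatively one can avoid the coalgebra language by transporting the identity to the universal one-generator quasi-shuffle algebra and invoking Hoffman's isomorphism onto a shuffle algebra, where the claim collapses to the triviality that the exponential of $uX$ there equals $\frac{1}{1-uX}$; but the group-like argument above is the more self-contained of the two.
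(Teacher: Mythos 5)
Your argument is correct. Note, however, that the paper offers no proof of this proposition at all: it is quoted verbatim from Hoffman--Ihara \cite[Corollary 5.1]{HI}, so there is no internal proof to compare against. What you have written is a genuine, self-contained proof, and it is sound: the bialgebra compatibility of $\ast_{\hbar}$ with deconcatenation is Hoffman's standard quasi-shuffle lemma (the induction you sketch works because $\circ_{\hbar}$ is commutative and associative); the identification of primitives with $\mathfrak{z}$ via the $(\text{first letter})\otimes(\text{rest})$ component is fine over $\mathcal{C}$ since $\widehat{\mathfrak{H}^{1}}$ is free on words; the reduction of a group-like series to its length-one parts is the usual induction on the reduced coproduct; and the key computation $\pi_{1}(\ell_{1}\ast_{\hbar}\cdots\ast_{\hbar}\ell_{m})=\ell_{1}\circ_{\hbar}\cdots\circ_{\hbar}\ell_{m}$ together with the scalar identity $\exp(\log(1+y))-1=y$ closes the argument. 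By contrast, the cited source obtains the identity as a corollary of the general formalism of formal power series acting on letters via the $\exp/\log$ correspondence between the quasi-shuffle and shuffle algebras --- essentially the alternative you mention in your last sentence. Your coalgebraic route is more elementary and self-contained (no appeal to Hoffman's isomorphism), at the cost of proving the bialgebra and primitivity facts by hand; the $\exp/\log$ route is shorter if one is willing to import that machinery. Either is acceptable here; only the statement, not its proof, is used in the paper.
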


Set
\begin{align*}
        \Psi(X)=\sum_{r\ge 1}        \frac{X^{Nr}}{r}
        \left(\psi_{P}\right)^{\circ_{\hbar}r}.
\end{align*}
Proposition \ref{prop:exp-ast} implies that
\begin{align*}
        \frac{1}{1+\psi_{P}X^{N}}=\exp_{\ast_{\hbar}}(-\Psi(X)).
\end{align*}

Set $P(z)=\sum_{j=1}^{N}c_{j}z^{j}$ with $c_{j} \in \mathbb{Q}$.
Then $\psi_{P}=\sum_{j=1}^{N}c_{j}(-\hbar)^{-j}g_{j}$
and
\begin{align*}
        \Psi(X)
         & =\sum_{r\ge 1}\frac{X^{Nr}}{r}
        \sum_{j_{1}, \ldots , j_{r}=1}^{N}(-\hbar)^{-\sum_{a=1}^{r}j_{a}}
        c_{j_{1}} \cdots c_{j_{r}} g_{j_{1}+\cdots +j_{r}} \\
         & =\sum_{l \ge 1}(-\hbar)^{-l}g_{l}
        \sum_{r\ge 1}\frac{X^{Nr}}{r}
        \sum_{\substack{j_{1}+\cdots +j_{r}=l              \\ 1\le j_{1}, \ldots , j_{r}\le N}}
        c_{j_{1}} \cdots c_{j_{r}}.
\end{align*}
By using \eqref{eq:stirling-duality},
we see that
\begin{align*}
        g_{k}=\frac{1}{(k-1)!}\sum_{j=1}^{k}{k\brack j}
        (-\hbar)^{k-j}\phi_{j}
        \qquad (k\ge 1)
\end{align*}
{}from the definition \eqref{eq:def-phi} of $\phi_{k}$.
Hence
\begin{align*}
        \Psi(X)=\sum_{k\ge 1}(-\hbar)^{-k}\phi_{k}
        \sum_{r\ge 1}\frac{X^{Nr}}{r}
        \sum_{l\ge 1}
        \frac{1}{(l-1)!}{l \brack k}
        \sum_{\substack{j_{1}+\cdots +j_{r}=l \\ 1\le j_{1}, \ldots , j_{r}\le N}}
        c_{j_{1}} \cdots c_{j_{r}}.
\end{align*}
It follows from \eqref{eq:stirling-first-harmonic} and
the assumption $P \in \mathcal{P}$ that
the term with $k=1$ of the right-hand side vanishes because
\begin{align*}
        \sum_{l\ge 1}\frac{1}{(l-1)!}
        {l \brack 1}
        \sum_{\substack{j_{1}+\cdots +j_{r}=l \\ 1\le j_{1}, \ldots , j_{r}\le N}}
        c_{j_{1}} \cdots c_{j_{r}}=
        \left(\sum_{1\le j \le N}c_{j}\right)^{r}=(P(1))^{r}=0.
\end{align*}
For the terms with $k\ge 2$, we use the following formula.
\begin{lem}\label{lem:stirring-integral}
        For $k \ge 2$ and $l\ge 1$, it holds that
        \begin{align*}
                \frac{1}{(l-1)!}{l \brack k}=
                \frac{1}{(k-1)!}
                \left(\prod_{a=1}^{k-1}\int_{0}^{1}dz_{a}\right)
                G_{l}(z_{1}, \ldots , z_{k-1}),
        \end{align*}
        where $G_{l}(z_{1}, \ldots z_{k-1})$ is given by
        \begin{align*}
                G_{l}(z_{1}, \ldots , z_{k-1})=
                \prod_{j=1}^{k-1}\frac{1}{1-z_{j}}+
                \sum_{a=1}^{k-1}\frac{z_{a}^{l-1}}{z_{a}-1}
                \prod_{\substack{j=1 \\ j\not=a}}^{k-1}\frac{1}{z_{a}-z_{j}}.
        \end{align*}
\end{lem}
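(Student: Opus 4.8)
The plan is to prove Lemma \ref{lem:stirring-integral} by showing that the right-hand side, viewed as a generating-function statement, is equivalent to the known identity \eqref{eq:stirling-first-log} for Stirling numbers of the first kind. First I would recognize that $G_{l}(z_{1},\ldots,z_{k-1})$ is the result of applying a repeated-partial-fraction / iterated-integration kernel to the monomial $z^{l-1}$, so the natural move is to package the whole family $\{G_{l}\}_{l\ge 1}$ into a single generating function in an auxiliary variable $t$, namely $\sum_{l\ge 1}G_{l}(z_{1},\ldots,z_{k-1})\,t^{l-1}$. Summing the geometric series $\sum_{l\ge1}z_a^{l-1}t^{l-1}=1/(1-z_a t)$ in the second group of terms, and noting the first term contributes $1/(1-t)\cdot\prod_{j=1}^{k-1}1/(1-z_j)$, I expect $\sum_{l\ge1}G_{l}\,t^{l-1}$ to collapse — after a partial-fraction identity in the variable $t$ — to the clean product $\prod_{j=0}^{k-1}\dfrac{1}{1-z_j t}$ with the convention $z_0=1$. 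This collapse is exactly the statement that the displayed $G_l$ is the complete homogeneous symmetric function $h_{l-1}(1,z_1,\ldots,z_{k-1})$, and the partial-fraction expansion of $\prod_j 1/(1-z_j t)$ in $t$ reproduces the given formula for $G_l$ term by term.

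Next I would integrate. With $z_0=1$ fixed and $z_1,\ldots,z_{k-1}$ integrated over $[0,1]$, I compute $\Big(\prod_{a=1}^{k-1}\int_0^1 dz_a\Big)\prod_{j=0}^{k-1}\frac{1}{1-z_j t}$. Each integral $\int_0^1\frac{dz_a}{1-z_a t}=-\frac{1}{t}\log(1-t)$ is independent of the others, so the whole multiple integral equals $\frac{1}{1-t}\Big(-\frac{\log(1-t)}{t}\Big)^{k-1}=\frac{1}{1-t}\cdot\frac{(-1)^{k-1}(\log(1-t))^{k-1}}{t^{k-1}}$. Wait — I must be careful about whether the $j=0$ factor $1/(1-t)$ is inside or outside the integration; since $z_0=1$ is not an integration variable, that factor is simply a constant pulled out, so the result is $\frac{1}{1-t}\big(-\log(1-t)\big)^{k-1}/t^{k-1}$. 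Dividing by $(k-1)!$ and extracting the coefficient of $t^{l-1}$ (equivalently, of $t^{l}$ after multiplying through by $t^{k-1}$, i.e.\ reindexing), I would match against the standard expansion $\frac{1}{k!}\big(\log\frac{1}{1-t}\big)^{k}=\sum_{l\ge k}\frac{1}{l!}{l\brack k}t^{l}$, after accounting for the extra factor $1/((1-t)t^{k-1})$; a short bookkeeping argument with the recurrence ${l\brack k}$ or with the identity $\sum_{l}\frac{1}{l!}{l\brack k}t^l=\frac{1}{k!}(-\log(1-t))^k$ and the geometric factor from $1/(1-t)$ should produce exactly $\sum_{l\ge1}\frac{1}{(l-1)!}{l\brack k}t^{l-1}$.

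The two pieces fit together as follows: I will establish (a) $\sum_{l\ge1}G_l\,t^{l-1}=\frac{1}{1-t}\prod_{j=1}^{k-1}\frac{1}{1-z_j t}$ by the partial-fraction computation in $t$, and (b) the claimed identity then follows by applying $\frac{1}{(k-1)!}\prod_{a=1}^{k-1}\int_0^1 dz_a$ to both sides and comparing coefficients of $t^{l-1}$, using the elementary integral and the generating function for ${l\brack k}$. I expect the main obstacle to be step (a): verifying that the specific combination in the definition of $G_l$ — the lead term $\prod_{j=1}^{k-1}1/(1-z_j)$ plus the sum $\sum_{a}\frac{z_a^{l-1}}{z_a-1}\prod_{j\ne a}\frac{1}{z_a-z_j}$ — is genuinely the partial-fraction decomposition of $\frac{1}{1-t}\prod_{j=1}^{k-1}\frac{1}{1-z_jt}$ (as a rational function of $t$) evaluated so as to extract the coefficient of $t^{l-1}$. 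This requires care with the residue at the pole $t=1/z_a$ versus the pole $t=1$, the correct sign of $\frac{z_a^{l-1}}{z_a-1}$, and the genericity of the $z_j$'s so that all poles are simple; once the partial fractions are set up correctly, reading off the $t^{l-1}$ coefficient is routine. An alternative, should the residue bookkeeping prove delicate, is to prove (a) by induction on $k$: integrating out a single variable $z_{k-1}$ in $G_l(z_1,\ldots,z_{k-1})$ and checking that it produces $\frac{1}{k-1}$ times a telescoping of $G$-functions of one fewer variable, matching the Stirling recurrence ${l\brack k}=\frac{1}{k-1}\sum\cdots$; but the generating-function route is cleaner and I would present that.
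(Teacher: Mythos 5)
You take a genuinely different route from the paper, which starts from \eqref{eq:stirling-first-harmonic}, writes $1/m_a=\int_0^1y_a^{m_a-1}\,dy_a$, invokes the explicit formula of \cite[Proposition 3.3]{INY} for the truncated sum $\sum_{l>m_1>\cdots>m_{k-1}>0}\prod_a y_a^{m_a-1}$, and then changes variables to $z_a=y_1\cdots y_a$. Your generating-function strategy is viable and, once corrected, arguably cleaner. But step (a) as you state it is false, and this is a genuine gap rather than a residue-bookkeeping subtlety. The correct identity is
\begin{align*}
\sum_{l\ge1}G_l(z_1,\ldots,z_{k-1})\,t^{l-1}=\frac{t^{k-1}}{(1-t)\prod_{j=1}^{k-1}(1-z_jt)},
\end{align*}
i.e.\ $G_l=h_{l-k}(1,z_1,\ldots,z_{k-1})$, not $h_{l-1}$. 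Writing $z_0=1$, the partial fractions of $t^{k-1}/\prod_{j=0}^{k-1}(1-z_jt)$ have coefficients $d_a=\prod_{j\ne a}(z_a-z_j)^{-1}$ (the $t^{k-1}$ in the numerator, evaluated at $t=1/z_a$, exactly cancels the $z_a^{k-1}$ produced by clearing denominators in $\prod_{j\ne a}(1-z_j/z_a)$), whereas $\prod_{j=0}^{k-1}(1-z_jt)^{-1}$ has coefficients $\prod_{j\ne a}z_a(z_a-z_j)^{-1}$, carrying an extra $z_a^{k-1}$ that is absent from the definition of $G_l$. A degree count already rules out your version: for $k=2$ one has $G_l(z_1)=\frac{1}{1-z_1}+\frac{z_1^{l-1}}{z_1-1}=1+z_1+\cdots+z_1^{l-2}=h_{l-2}(1,z_1)$, of degree $l-2$, not $l-1$. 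Your closing ``reindex by $t^{k-1}$'' remark is exactly where the error would surface: with your version of (a), the integrated generating function is $\frac{(-\log(1-t))^{k-1}}{(k-1)!\,t^{k-1}(1-t)}$, which does \emph{not} equal $\sum_{l\ge1}\frac{1}{(l-1)!}{l\brack k}t^{l-1}$ --- it is off by precisely $t^{k-1}$, and no reindexing of a true identity can absorb that.

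With the corrected step (a) everything closes with no bookkeeping at all: the factor $t^{k-1}$ from the generating function cancels the $t^{-(k-1)}$ coming from $\bigl(\int_0^1\frac{dz}{1-zt}\bigr)^{k-1}=\bigl(\frac{-\log(1-t)}{t}\bigr)^{k-1}$, leaving $\frac{1}{(k-1)!}\cdot\frac{(-\log(1-t))^{k-1}}{1-t}=\frac{d}{dt}\,\frac{(-\log(1-t))^{k}}{k!}=\sum_{l\ge1}{l\brack k}\frac{t^{l-1}}{(l-1)!}$ by \eqref{eq:stirling-first-log}. So the gap is localized and easily repaired, but as written the key identity of your argument is wrong and the final comparison of coefficients would fail.
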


\begin{proof}
        By using \eqref{eq:stirling-first-harmonic},
        we see that
        \begin{align*}
                \frac{1}{(l-1)!}{l \brack k}=
                \sum_{l>m_{1}>\cdots >m_{k-1}>0}\frac{1}{m_{1}\cdots m_{k-1}}=
                \left(\prod_{a=1}^{k-1}\int_{0}^{1}dy_{a}\right)
                \sum_{l>m_{1}>\cdots >m_{k-1}>0} \prod_{a=1}^{k-1}y_{a}^{m_{a}-1}.
        \end{align*}
        It holds that
        \begin{align*}
                \sum_{l>m_{1}>\cdots >m_{k-1}>0} \prod_{a=1}^{k-1}y_{a}^{m_{a}-1}=
                \sum_{a=0}^{k-1}(-1)^{a}
                \frac{(y_{1}\cdots y_{a})^{l-1}}{y_{a+1}\cdots y_{k-1}}
                \prod_{j=1}^{a}\frac{1}{1-y_{j}\cdots y_{a}}
                \prod_{j=a+1}^{k-1}\frac{y_{a+1}\cdots y_{j}}{1-y_{a+1}\cdots y_{j}}
        \end{align*}
        (see, e.g., \cite[Proposition 3.3]{INY}).
        By changing the variables $y_{a}$ to $z_{a}=y_{1}\cdots y_{a} \, (1\le a<k)$,
        we obtain the desired formula because
        $G_{l}(z_{1}, \ldots , z_{k-1})$ is symmetric with respect to
        $z_{1}, \ldots , z_{k-1}$.
\end{proof}

Thus we find that
\begin{align*}
        \Psi(X)=\sum_{k\ge 1}\frac{(-\hbar)^{-k}}{(k-1)!}\phi_{k}
        \sum_{r\ge 1}\frac{X^{Nr}}{r}
        \sum_{l\ge 1}
        \left(\prod_{a=1}^{k-1}\int_{0}^{1}dz_{a}\right)
        G_{l}(z_{1}, \ldots , z_{k-1})
        \sum_{\substack{j_{1}+\cdots +j_{r}=l \\ 1\le j_{1}, \ldots , j_{r}\le N}}
        c_{j_{1}} \cdots c_{j_{r}}.
\end{align*}
Since
\begin{align*}
        \sum_{l\ge 1}G_{l}(z_{1}, \ldots , z_{k-1})
        \sum_{\substack{j_{1}+\cdots +j_{r}=l \\ 1\le j_{1}, \ldots , j_{r}\le N}}
        c_{j_{1}} \cdots c_{j_{r}}
        =(P(1))^{r}\prod_{j=1}^{k-1}\frac{1}{1-z_{j}}+
        \sum_{a=1}^{k-1}\frac{(P(z_{a}))^{r}}{z_{a}(z_{a}-1)}
        \prod_{\substack{j=1                  \\ j\not=a}}^{k-1}\frac{1}{z_{a}-z_{j}}
\end{align*}
and the first term is zero because $P \in \mathcal{P}$,
it holds that
\begin{align*}
        \Psi(X) & =
        \sum_{k\ge 2}\frac{(-\hbar)^{-k}}{(k-1)!}\phi_{k}
        \sum_{r\ge 1}\frac{X^{Nr}}{r}
        \left(\prod_{a=1}^{k-1}\int_{0}^{1}dz_{a}\right)
        \sum_{a=1}^{k-1}\frac{(P(z_{a}))^{r}}{z_{a}(z_{a}-1)}
        \prod_{\substack{j=1 \\ j\not=a}}^{k-1}\frac{1}{z_{a}-z_{j}} \\
                & =
        \sum_{k\ge 2}\frac{(-\hbar)^{-k}}{(k-1)!}\phi_{k}
        \left(\prod_{a=1}^{k-1}\int_{0}^{1}dz_{a}\right)
        \sum_{a=1}^{k-1}\frac{(-\mathop{\mathrm{Log}}{(1-X^{N}P(z_{a}))})}{z_{a}(z_{a}-1)}
        \prod_{\substack{j=1 \\ j\not=a}}^{k-1}\frac{1}{z_{a}-z_{j}}.
\end{align*}
Now we set
\begin{align*}
        F_{k}(x)=        \left(\prod_{a=1}^{k-1}\int_{0}^{1}dz_{a}\right)
        \sum_{a=1}^{k-1}\frac{(-\mathop{\mathrm{Log}}{(1-x^{N}P(z_{a}))})}{z_{a}(z_{a}-1)}
        \prod_{\substack{j=1 \\ j\not=a}}^{k-1}\frac{1}{z_{a}-z_{j}}
\end{align*}
for $k \ge 2$,
which is a holomorphic function defined in a neighborhood of $x=0$.
Then
\begin{align*}
        \Psi(X)=\sum_{k\ge 2}\frac{(-\hbar)^{-k}}{(k-1)!}\phi_{k}F_{k}(X).
\end{align*}
Hence, to verify \eqref{eq:main1} with $L=1$,
it suffices to show
\begin{align}
        F_{k}(x)=\frac{1}{k}\sum_{m=1}^{N}
        \left(-\mathop{\mathrm{Log}}(1-\alpha(\epsilon_{N}^{m}x))\right)^{k},
        \label{eq:F_k}
\end{align}
where $\alpha(x)$ is a convergent series satisfying \eqref{eq:def-alpha}.

Since $P \in \mathcal{P}$, we can set $P(z)=z(z-1)Q(z)$ with $Q(z)\in \mathbb{Q}[z]$.
Then
\begin{align*}
        F_{k}'(x)=Nx^{N-1}
        \left(\prod_{a=1}^{k-1}\int_{0}^{1}dz_{a}\right)
        \sum_{a=1}^{k-1}\frac{Q(z_{a})}{1-x^{N}P(z_{a})}
        \prod_{\substack{j=1 \\ j\not=a}}^{k-1}\frac{1}{z_{a}-z_{j}}.
\end{align*}
For simplicity of notation,
we set $\alpha_{m}=\alpha(\epsilon_{N}^{m}x)$
for $1\le m \le N$.
Then, since $\deg{Q}\le \deg{P}-2$ and
$1-x^{N}P(z)=\prod_{m=1}^{N}(1-\alpha_{m}z)$, it holds that
\begin{align*}
         &
        \sum_{a=1}^{k-1}\frac{Q(z_{a})}{1-x^{N}P(z_{a})}
        \prod_{\substack{j=1                                  \\ j\not=a}}^{k-1}\frac{1}{z_{a}-z_{j}}=
        \sum_{a=1}^{k-1}\mathrm{Res}_{w=z_{a}}dw
        \frac{Q(w)}{1-x^{N}P(w)}
        \prod_{j=1}^{k-1}\frac{1}{w-z_{j}}                    \\
         & =-\sum_{m=1}^{N}\mathrm{Res}_{w=\alpha_{m}^{-1}}dw
        \frac{Q(w)}{1-x^{N}P(w)}\prod_{j=1}^{k-1}\frac{1}{w-z_{j}}
        =x^{-N}\sum_{m=1}^{N}
        \frac{Q(\alpha_{m}^{-1})}{P'(\alpha_{m}^{-1})}
        \prod_{j=1}^{k-1}\frac{\alpha_{m}}{1-\alpha_{m}z_{j}}.
\end{align*}
We have $1-x^{N}P(\alpha_{m}^{-1})=0$
in a neighborhood of $x=0$.
By differentiating both sides with respect to $x$ and
using $P(z)=z(z-1)Q(z)$, we see that
\begin{align*}
        \frac{Q(\alpha_{m}^{-1})}{P'(\alpha_{m}^{-1})}=\frac{x}{N(1-\alpha_{m})}\,\frac{d\alpha_{m}}{dx}.
\end{align*}
Thus we find that
\begin{align*}
        F_{k}'(x)
         & =\sum_{m=1}^{N}\frac{1}{1-\alpha_{m}}\frac{d\alpha_{m}}{dx}
        \left(\prod_{a=1}^{k-1}\int_{0}^{1}dz_{a}\right)
        \prod_{j=1}^{k-1}\frac{\alpha_{m}}{1-\alpha_{m}z_{j}}                 \\
         & =\sum_{m=1}^{N}\frac{1}{1-\alpha_{m}}\frac{d\alpha_{m}}{dx}
        \left(\int_{0}^{1}dz \, \frac{\alpha_{m}}{1-\alpha_{m}z}\right)^{k-1} \\
         & =\sum_{m=1}^{N}\frac{1}{1-\alpha_{m}}\frac{d\alpha_{m}}{dx}
        \left(-\mathop{\mathrm{Log}}(1-\alpha_{m})\right)^{k-1}
        =\frac{d}{dx}\left(\frac{1}{k}\sum_{m=1}^{N}
        \left(-\mathop{\mathrm{Log}}(1-\alpha_{m})\right)^{k}\right).
\end{align*}
Since $F_{k}(0)=0$ and $\alpha_{m}|_{x=0}=0$ for $1\le m \le N$,
we obtain \eqref{eq:F_k}.
This completes the proof of \eqref{eq:main1} with $L=1$.


\section{A solvable case}\label{sec:solvable}

In order to write the formula \eqref{eq:main1} explicitly,
we need to find a power series $\alpha(x)$ satisfying \eqref{eq:def-alpha}
and calculate the logarithm $\mathrm{Log}(1-\alpha(x))$.
As an example where such a calculation can be performed,
we consider the case where
\begin{align*}
        P(z)=z^{N-1}(z-1) \qquad (N\ge 2).
\end{align*}
Then we have $\psi_{P}=(-\hbar)^{-N}e_{N}$
as seen in the beginning of Section \ref{sec:main}.

\begin{prop}\label{prop:varphi}
        Define the power series $\varphi(\theta; x)$ in $x$ by
        \begin{align}
                \varphi(\theta; x)=\sum_{k\ge 1}\frac{x^{k}}{k!}\prod_{a=1}^{k-1}(k\theta-a).
                \label{eq:def-varphi}
        \end{align}
        Set
        \begin{align}
                \alpha(x)=1-e^{\varphi(1/N; -x)}.
                \label{eq:alpha-varphi}
        \end{align}
        Then, for $P(z)=z^{N-1}(z-1)$, we have
        \begin{align}
                1-x^{N}P(z)=\prod_{m=1}^{N}(1-\alpha(\epsilon_{N}^{m}x)z).
                \label{eq:alpha-factorize}
        \end{align}
\end{prop}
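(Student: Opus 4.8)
The plan is to reduce the product formula \eqref{eq:alpha-factorize} to the scalar functional equation for $\alpha$ that is already isolated in Remark \ref{rem:existence-alpha}, and then to establish that equation for the explicit series \eqref{eq:alpha-varphi} by Lagrange inversion. For $P(z)=z^{N-1}(z-1)$ one has $R(z)=z^{N}P(1/z)=1-z$, so by Remark \ref{rem:existence-alpha} the factorization \eqref{eq:def-alpha} holds as soon as $\alpha(x)$ is a power series with $\alpha(0)=0$, linear coefficient $R(0)^{1/N}=1$, and $\alpha(x)^{N}=x^{N}\bigl(1-\alpha(x)\bigr)$; indeed these properties force $\alpha=x(1-\alpha)^{1/N}$ (the branch of the $N$th root being fixed by the linear coefficient), whose coefficients are then determined recursively, so $\alpha$ must coincide with the function produced in Remark \ref{rem:existence-alpha}. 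Thus it suffices to check that $\alpha(x)=1-e^{\varphi(1/N;-x)}$ has these three properties.

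Since $\varphi(1/N;-x)=-x+O(x^{2})$ has no constant term, we get $\alpha(0)=1-e^{0}=0$ and $\alpha(x)=x+O(x^{2})$, which disposes of two of the three properties. For the functional equation, set $\theta=1/N$ and let $u=u(x)\in x\mathbb{Q}[[x]]$ be the unique formal power series solving $u=-x(1+u)^{\theta}$; this exists because $(1+u)^{\theta}$ has nonzero constant term. Applying the Lagrange--B\"urmann formula to $u=-x(1+u)^{\theta}$ with $F(u)=\mathrm{Log}(1+u)$ (so that $F'(u)$ times the $k$th power of $(1+u)^{\theta}$ equals $(1+u)^{k\theta-1}$) gives
\begin{align*}
        \mathrm{Log}(1+u)=\sum_{k\ge 1}\frac{(-x)^{k}}{k}
        \bigl[u^{k-1}\bigr](1+u)^{k\theta-1}
        =\sum_{k\ge 1}\frac{(-x)^{k}}{k}\binom{k\theta-1}{k-1},
\end{align*}
where $[u^{k-1}]$ extracts the coefficient of $u^{k-1}$. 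Because $\tfrac{1}{k}\binom{k\theta-1}{k-1}=\tfrac{1}{k!}\prod_{a=1}^{k-1}(k\theta-a)$, the right-hand side is precisely $\varphi(\theta;-x)$, so $1+u=e^{\varphi(1/N;-x)}$, that is, $u=-\alpha(x)$. Feeding this back into $u=-x(1+u)^{\theta}$ yields $\alpha(x)=x\bigl(1-\alpha(x)\bigr)^{1/N}$, hence $\alpha(x)^{N}=x^{N}\bigl(1-\alpha(x)\bigr)$, as needed; combined with the first paragraph this proves \eqref{eq:alpha-factorize}.

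The one genuinely non-routine move is the change of variable that linearizes the inversion: with $w=e^{\varphi(\theta;-x)}$ the identity to be proved is the transcendental relation $1-w=x\,w^{\theta}$, and substituting $u=w-1$ converts it into the algebraic fixed-point equation $u=-x(1+u)^{\theta}$, to which Lagrange--B\"urmann applies directly with $F=\mathrm{Log}(1+\,\cdot\,)$. Everything else — evaluating $R(z)$, the binomial rewriting of $\varphi$, and the uniqueness/branch bookkeeping already carried out in Remark \ref{rem:existence-alpha} — is straightforward. (Alternatively one can bypass Remark \ref{rem:existence-alpha}: the relation $\alpha(\epsilon_{N}^{m}x)^{N}=x^{N}\bigl(1-\alpha(\epsilon_{N}^{m}x)\bigr)$ shows that $z=1/\alpha(\epsilon_{N}^{m}x)$, $1\le m\le N$, are the $N$ distinct roots of $1-x^{N}P(z)$ near $x=0$, but one then still has to compute $\prod_{m=1}^{N}\alpha(\epsilon_{N}^{m}x)$ in order to match the constant factor, so invoking the remark is shorter.)
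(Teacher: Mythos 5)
Your proof is correct, and its first step --- reducing \eqref{eq:alpha-factorize} to the single relation $\alpha(x)^{N}=x^{N}(1-\alpha(x))$ together with $\alpha(x)=x+O(x^{2})$, via Remark \ref{rem:existence-alpha} --- is exactly the reduction the paper makes. Where you genuinely diverge is in how that relation is established. The paper derives it from the identity $e^{\varphi(\theta;x)}-xe^{\theta\varphi(\theta;x)}=1$ (equation \eqref{eq:varphi-rel}), which in turn rests on the closed form for $e^{\lambda\varphi(\theta;x)}$ in Lemma \ref{lem:exp-strange}, proved by induction on coefficients using the polynomial identity of Lemma \ref{lem:key-strange} and several Stirling-number manipulations. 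You instead obtain $\mathrm{Log}(1+u)=\varphi(\theta;-x)$ in one stroke from the Lagrange--B\"urmann formula applied to $u=-x(1+u)^{\theta}$ with $F=\mathrm{Log}(1+\,\cdot\,)$, using $\tfrac{1}{k}\binom{k\theta-1}{k-1}=\tfrac{1}{k!}\prod_{a=1}^{k-1}(k\theta-a)$; this is shorter and bypasses the combinatorial core of Appendix \ref{sec:app2} entirely for the purposes of this proposition. The trade-off is that the paper's Lemma \ref{lem:exp-strange} carries the extra parameter $\lambda$ and is reused to prove Proposition \ref{prop:varphi-power}; note, though, that your method also yields that stronger statement at no extra cost by taking $F(u)=(1+u)^{\lambda}$ in the same Lagrange--B\"urmann setup, which gives $(1+u)^{\lambda}=1+\lambda\sum_{k\ge 1}\frac{(-x)^{k}}{k!}\prod_{a=1}^{k-1}(\lambda+k\theta-a)$, i.e.\ \eqref{eq:exp-strange} with $x\to -x$. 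So your argument is not only valid but would, if pushed slightly further, replace both appendix lemmas.
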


See Appendix \ref{sec:app2} for the proof of Proposition \ref{prop:varphi}.
It implies the following identity.

\begin{thm}\label{thm:main2}
        For $N\ge 2$ and $L\ge 1$, it holds that
        \begin{align}
                \frac{1}{1+\hbar^{-NL}e_{N}^{\circ_{\hbar L}}X^{NL}}=
                \exp_{\ast_{\hbar}}{\left(-\sum_{k\ge 2}\frac{\hbar^{-k}}{k!}\phi_{k}
                        \sum_{m=1}^{NL}\left(\varphi(1/N; \epsilon_{NL}^{m}X)\right)^{k}
                        \right)},
                \label{eq:main-eN}
        \end{align}
        where $\varphi(\theta; x)$ is defined by \eqref{eq:def-varphi}.
\end{thm}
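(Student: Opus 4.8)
The plan is to obtain \eqref{eq:main-eN} as a direct corollary of the general identity \eqref{eq:main1} once we know that the function $\alpha(x)$ associated to $P(z)=z^{N-1}(z-1)$ can be expressed through the series $\varphi(1/N;x)$. Concretely, Theorem \ref{thm:main1} applies with this choice of $P$ (which indeed lies in $\mathcal{P}\setminus\{0\}$ and has $\deg P=N\ge 2$), and in the beginning of Section \ref{sec:solvable} we already recorded $\psi_P=(-\hbar)^{-N}e_N$. Hence $(\psi_P)^{\circ_{\hbar}L}=((-\hbar)^{-N})^L e_N^{\circ_{\hbar}L}=(-\hbar)^{-NL}e_N^{\circ_{\hbar}L}$, and since $NL$ is even exactly when $N$ is even or $L$ is even — wait, more simply, one checks $(-\hbar)^{-NL}=(-1)^{NL}\hbar^{-NL}$; I would absorb the sign $(-1)^{NL}$ by noting that replacing $X$ by $\epsilon_{NL}X$-type substitutions is already built into \eqref{eq:main1}, or more cleanly, observe that the left-hand side of \eqref{eq:main-eN} is written with $\hbar^{-NL}e_N^{\circ_{\hbar L}}$ and that $(-1)^{NL}$ can be reabsorbed into the defining relation \eqref{eq:alpha-factorize} versus \eqref{eq:def-alpha}. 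This bookkeeping of the sign $(-1)^{NL}$ and of the notational clash between $e_N^{\circ_{\hbar}L}$ and the paper's $e_N^{\circ_{\hbar L}}$ is the one genuinely fiddly point; everything else is substitution.

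The substantive input is Proposition \ref{prop:varphi}, which is proved in Appendix \ref{sec:app2} and which I am allowed to assume. It states precisely that, for $P(z)=z^{N-1}(z-1)$, the convergent power series $\alpha(x)=1-e^{\varphi(1/N;-x)}$ with $\alpha(0)=0$ satisfies the factorization \eqref{eq:alpha-factorize}, i.e. $1-x^N P(z)=\prod_{m=1}^N(1-\alpha(\epsilon_N^m x)z)$, which is exactly condition \eqref{eq:def-alpha}. Therefore this particular $\alpha$ is (up to the ambiguity of branch discussed in Remark \ref{rem:existence-alpha}, which does not affect the unordered set $\{\alpha(\epsilon_N^m x)\}_m$) the function appearing in Theorem \ref{thm:main1}. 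The key computation is then
\[
\mathop{\mathrm{Log}}(1-\alpha(x))=\mathop{\mathrm{Log}}\bigl(e^{\varphi(1/N;-x)}\bigr)=\varphi(1/N;-x),
\]
valid near $x=0$ since $\varphi(1/N;0)=0$ so the branch of $\mathop{\mathrm{Log}}$ is the principal one and the identity $\mathop{\mathrm{Log}}(e^y)=y$ holds for $y$ in a neighborhood of $0$.

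Assembling: substitute $P(z)=z^{N-1}(z-1)$ into \eqref{eq:main1}. The left-hand side becomes $1/(1+(\psi_P)^{\circ_{\hbar}L}X^{NL})$ with $(\psi_P)^{\circ_{\hbar}L}=(-1)^{NL}\hbar^{-NL}e_N^{\circ_{\hbar}L}$; I would handle the sign by replacing $X\mapsto \epsilon_{2}X$ if $NL$ is odd (noting that $\epsilon_{NL}$-rotations permute the index set $m$), or, preferably, by checking that the statement of Theorem \ref{thm:main2} is consistent with \eqref{eq:main1} because $\epsilon_{NL}^m$ already ranges over all $NL$-th roots of unity and $(-1)^{NL}=\epsilon_{NL}^{NL/\gcd(2,NL)\cdot\ldots}$ — this is the place to be careful. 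On the right-hand side, each $\mathop{\mathrm{Log}}(1-\alpha(\epsilon_{NL}^m X))$ becomes $\varphi(1/N;-\epsilon_{NL}^m X)$; since $\{-\epsilon_{NL}^m\mid 1\le m\le NL\}=\{\epsilon_{NL}^m\mid 1\le m\le NL\}$ as multisets when $NL$ is even, and produces a sign otherwise, the sum $\sum_{m=1}^{NL}(\mathop{\mathrm{Log}}(1-\alpha(\epsilon_{NL}^m X)))^k$ equals $\sum_{m=1}^{NL}(\varphi(1/N;\epsilon_{NL}^m X))^k$ up to a factor $(-1)^k$ that I must track together with the sign from the left side. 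The cleanest route is probably to apply Theorem \ref{thm:main1} not to $P(z)$ directly but to $\widetilde P(z)=-P(-?)$-type normalization, or simply to verify both sides of \eqref{eq:main-eN} agree by matching this against \eqref{eq:main1} after the variable change $X\to -X$; I expect the signs to cancel exactly, yielding \eqref{eq:main-eN}. The main obstacle, then, is purely the sign/normalization bookkeeping between $(-\hbar)^{-N}$ in the definition of $\psi_P$ and $\hbar^{-N}$ in the statement — there is no analytic difficulty once Proposition \ref{prop:varphi} is in hand.
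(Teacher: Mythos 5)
Your approach is the paper's: substitute the $\alpha$ of Proposition \ref{prop:varphi} into \eqref{eq:main1}, use $\mathop{\mathrm{Log}}(1-\alpha(x))=\varphi(1/N;-x)$, and make one change of variable. The only point you leave unresolved --- the sign bookkeeping --- is settled exactly by the substitution $X\to -X$ that you name at the end as the ``cleanest route,'' and it requires no case distinction on the parity of $NL$: on the left, $(\psi_{P})^{\circ_{\hbar}L}=(-\hbar)^{-NL}e_{N}^{\circ_{\hbar}L}=(-1)^{NL}\hbar^{-NL}e_{N}^{\circ_{\hbar}L}$ while $(-X)^{NL}=(-1)^{NL}X^{NL}$, so the two signs cancel and the left side of \eqref{eq:main1} becomes that of \eqref{eq:main-eN}; on the right, each $\mathop{\mathrm{Log}}(1-\alpha(\epsilon_{NL}^{m}X))=\varphi(1/N;-\epsilon_{NL}^{m}X)$ becomes $\varphi(1/N;\epsilon_{NL}^{m}X)$ after $X\to -X$. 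Be aware that one of the alternatives you float --- comparing the multisets $\{-\epsilon_{NL}^{m}\}$ and $\{\epsilon_{NL}^{m}\}$ and extracting a factor $(-1)^{k}$ --- would not work as stated, because $\varphi(\theta;x)$ is not an odd function of $x$ in general (its $x^{2}$-coefficient is $(2\theta-1)/2\neq 0$ for $\theta=1/N$ with $N\ge 3$), so $\varphi(1/N;-x)\neq -\varphi(1/N;x)$; the global substitution $X\to -X$ is the correct and complete fix.
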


\begin{proof}
        Substitute \eqref{eq:alpha-varphi} into \eqref{eq:main1}
        and change $X \to -X$.
\end{proof}

As seen below the series $\varphi(1/N; x)$ with $N=2$ is essentially equal to
the arcsin function.
Thus we obtain Bachmann's identity
within the algebraic framework.

\begin{cor}
        For $L\ge 1$, it holds that
        \begin{align}
                \frac{1}{1+(-1)^{L}\hbar^{-2L}e_{2}^{\circ_{\hbar}L}X^{2L}}=
                \exp_{\ast_{\hbar}}{\left(2
                        \sum_{k\ge 1}\frac{(-1)^{k-1}}{(2k)!}\hbar^{-2k}\phi_{2k}
                        \sum_{m=1}^{L}
                        \left( 2\arcsin{\frac{e^{m\pi i/L}X}{2}} \right)^{2k}
                        \right)}.
                \label{eq:main-e2}
        \end{align}
        In particular, the equality \eqref{eq:Bachmann3} holds.
\end{cor}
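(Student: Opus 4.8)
The plan is to specialise Theorem~\ref{thm:main2} to $N=2$ and then twist by $X\mapsto iX$. The only input beyond bookkeeping is the identification of the series $\varphi(1/2;x)$ defined in \eqref{eq:def-varphi} with the arcsine; once that is in hand, everything reduces to manipulating $2L$-th roots of unity and powers of $i$.

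Step one is to record the closed form $\varphi(1/2;x)=-2i\arcsin(ix/2)$ (equivalently $\varphi(1/2;x)=2\,\mathrm{arcsinh}(x/2)$) as convergent power series near $x=0$. By \eqref{eq:alpha-varphi} the series $w(x):=e^{\varphi(1/2;-x)}$ equals $1-\alpha(x)$, and for the case $P(z)=z(z-1)$ of Section~\ref{sec:solvable} the factorisation \eqref{eq:alpha-factorize} reads $1-x^{2}z(z-1)=(1-\alpha(-x)z)(1-\alpha(x)z)$ since $\epsilon_{2}=-1$. Comparing the coefficients of $z$ and $z^{2}$ gives $\alpha(x)+\alpha(-x)=-x^{2}$ and $\alpha(x)\alpha(-x)=-x^{2}$, hence $w(x)+w(-x)=2+x^{2}$ and $w(x)w(-x)=1$, so $w(x)$ is the root of $t^{2}-(2+x^{2})t+1=0$ with $w(0)=1$, namely $w(x)=\tfrac{1}{2}(2+x^{2}-x\sqrt{4+x^{2}})$. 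Substituting $x=2i\sin\phi$ with $\phi$ near $0$ turns this into $w=\cos 2\phi-i\sin 2\phi=e^{-2i\phi}$, so $\varphi(1/2;-x)=-2i\phi=-2i\arcsin(-ix/2)$, and therefore $\varphi(1/2;x)=-2i\arcsin(ix/2)$. (Alternatively this can be read off coefficient-by-coefficient from \eqref{eq:def-varphi}: the factor $k/2-a$ with $a=k/2$ annihilates the even-degree terms, and the odd ones match the Taylor coefficients of $-2i\arcsin(ix/2)$.)

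Step two is to put $N=2$ in \eqref{eq:main-eN} and replace $X$ by $iX$, a legitimate operation on a formal power series identity. Since $(iX)^{2L}=(-1)^{L}X^{2L}$, the left-hand side becomes $1/(1+(-1)^{L}\hbar^{-2L}e_{2}^{\circ_{\hbar}L}X^{2L})$, as required. For the right-hand side, $\varphi(1/2;\epsilon_{2L}^{m}iX)=-2i\arcsin(-\epsilon_{2L}^{m}X/2)=2i\arcsin(\epsilon_{2L}^{m}X/2)$, so the $m$-sum in the exponent equals $(2i)^{k}\sum_{m=1}^{2L}(\arcsin(\epsilon_{2L}^{m}X/2))^{k}$. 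Splitting the range into $m$ and $m+L$ for $1\le m\le L$ and using $\epsilon_{2L}^{m+L}=-\epsilon_{2L}^{m}$ together with the oddness of $\arcsin$, the two halves cancel for odd $k$ and coincide for even $k=2j$; with $(2i)^{2j}=(-4)^{j}$, and absorbing $4^{j}$ into the $2j$-th powers so as to replace $\arcsin(\epsilon_{2L}^{m}X/2)$ by $2\arcsin(\epsilon_{2L}^{m}X/2)$, this gives $\sum_{m=1}^{2L}(\varphi(1/2;\epsilon_{2L}^{m}iX))^{2j}=2(-1)^{j}\sum_{m=1}^{L}(2\arcsin(e^{m\pi i/L}X/2))^{2j}$, using $\epsilon_{2L}^{m}=e^{m\pi i/L}$. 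Substituting back into \eqref{eq:main-eN} (only even $k=2j\ge 2$ survive) and rewriting $-(-1)^{j}=(-1)^{j-1}$ yields precisely \eqref{eq:main-e2}. Finally \eqref{eq:Bachmann3} is the case $L=1$: the $m$-sum then has the single term $m=1$ with $e^{\pi i}=-1$, and the even power $(2\arcsin(-X/2))^{2j}=(2\arcsin(X/2))^{2j}$ absorbs the sign, leaving exactly the exponent of \eqref{eq:Bachmann3}.

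The only genuinely non-routine point is the closed form for $\varphi(1/2;x)$; I expect the quadratic-and-trigonometric-substitution argument above to dispose of it cleanly, after which the remaining difficulty is merely keeping track of the powers of $i$ and of the action of $-1=\epsilon_{2L}^{L}$ on the relevant roots of unity.
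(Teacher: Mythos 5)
Your proposal is correct and follows essentially the same route as the paper: specialize \eqref{eq:main-eN} to $N=2$, substitute $X\mapsto iX$, and observe that the odd-$k$ terms cancel, the only difference being that the paper identifies $\varphi(1/2;x)=-2i\arcsin(ix/2)$ by recognizing it as the Gauss hypergeometric series for arcsine, whereas you derive it from the quadratic satisfied by $e^{\varphi(1/2;-x)}$. One small point: the condition $w(0)=1$ alone does not select the correct root of $t^{2}-(2+x^{2})t+1=0$ (both roots equal $1$ at $x=0$); you should instead use $w(x)=1-x+O(x^{2})$, which follows from $\varphi(\theta;x)=x+O(x^{2})$.
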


\begin{proof}
        We have
        \begin{align*}
                \varphi(\frac{1}{2}; x)=x\,
                {}_{2}F_{1}\left(\frac{1}{2}, \frac{1}{2}, \frac{3}{2}; -\frac{x^{4}}{2}\right)=
                -2i\arcsin{\frac{ix}{2}},
        \end{align*}
        where ${}_{2}F_{1}(\alpha, \beta, \gamma; z)$ is
        the Gauss hypergeometric series.
        Substitute it into \eqref{eq:main-eN} with $N=2$ and
        change $X$ to $iX$.
        Then the terms with odd $k$ vanish because
        the arcsin function is odd.
        Thus we obtain \eqref{eq:main-e2}.
\end{proof}

We write the sum over $1\le m \le NL$
in the right-hand side of $\eqref{eq:main-eN}$ more explicitly.

\begin{prop}\label{prop:varphi-power}
        For $k\ge 1$, it holds that
        \begin{align*}
                \frac{\left(\varphi(\theta; x)\right)^{k}}{k!}=
                \sum_{n\ge 1}\frac{x^{n}}{n!}
                \sum_{j\ge k}(-1)^{n-j}
                {n \brack j}
                \binom{j-1}{k-1}(n\theta)^{j-k}.
        \end{align*}
\end{prop}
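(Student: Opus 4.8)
The plan is to establish the claimed identity
\[
\frac{\left(\varphi(\theta; x)\right)^{k}}{k!}=
\sum_{n\ge 1}\frac{x^{n}}{n!}
\sum_{j\ge k}(-1)^{n-j}
{n \brack j}
\binom{j-1}{k-1}(n\theta)^{j-k}
\]
by first reducing to the case $k=1$ and then promoting that special case to all $k$ via a clean bijective/combinatorial argument on compositions. More precisely, first I would record the $k=1$ statement: comparing coefficients of $x^{n}$ in the definition $\varphi(\theta;x)=\sum_{n\ge 1}\frac{x^{n}}{n!}\prod_{a=1}^{n-1}(n\theta-a)$ with the right-hand side, the claim for $k=1$ is the polynomial identity $\prod_{a=1}^{n-1}(n\theta-a)=\sum_{j\ge 1}(-1)^{n-j}{n\brack j}(n\theta)^{j-1}$. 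This is exactly the defining expansion of Stirling numbers of the first kind (falling-factorial-type identity) in the variable $n\theta$, up to the sign convention ${n\brack j}$ versus the signed Stirling numbers $s(n,j)$; it should follow directly from the Stirling-number facts reviewed in Appendix~\ref{sec:app-stirling} (the relation between $\prod_{a=0}^{n-1}(t-a)$ and $\sum_j s(n,j)t^j$, here with $t=n\theta$ and the $a=0$ factor stripped off, which accounts for the $\binom{j-1}{0}$ and the shift $j-1$ rather than $j$).

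Next I would handle general $k$. Writing $\varphi(\theta;x)=\sum_{n\ge1}\varphi_n(\theta)\,x^n/n!$ with $\varphi_n(\theta)=\prod_{a=1}^{n-1}(n\theta-a)$, the coefficient of $x^n/n!$ in $(\varphi(\theta;x))^k$ is $\sum_{n_1+\cdots+n_k=n}\binom{n}{n_1,\ldots,n_k}\varphi_{n_1}(\theta)\cdots\varphi_{n_k}(\theta)$. Substituting the $k=1$ identity into each factor and expanding, this becomes a sum over compositions $(n_1,\ldots,n_k)$ and exponents $(j_1,\ldots,j_k)$ of terms involving $\prod_i {n_i\brack j_i}(n_i\theta)^{j_i-1}$. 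The plan is then to recognize the right-hand side of the Proposition as exactly this double sum, using two classical Stirling identities: the convolution/product formula for Stirling numbers of the first kind (the multinomial convolution $\sum \binom{n}{n_1,\ldots,n_k}{n_1\brack j_1}\cdots{n_k\brack j_k}$ relating to ${n\brack j_1+\cdots+j_k}$ with a binomial factor), and the elementary identity $\sum_{j_1+\cdots+j_k=j}\binom{j_1-1}{?}\cdots = \binom{j-1}{k-1}$ controlling how the individual powers $(n_i\theta)^{j_i-1}$ combine — but note $(n_i\theta)$ is \emph{not} $(n\theta)$, so the monomials in $\theta$ of total degree $j-k$ must be reorganized. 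This last point is where care is needed.

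The honest approach to that reorganization is probably to avoid fighting the $n_i$ versus $n$ discrepancy head-on and instead prove the Proposition by a generating-function manipulation in a \emph{second} variable. Introduce $y$ and consider $\sum_{k\ge0}\frac{(\varphi(\theta;x))^k}{k!}y^k=\exp(y\,\varphi(\theta;x))$; the claim is equivalent to the bivariate identity
\[
\exp\!\bigl(y\,\varphi(\theta;x)\bigr)=\sum_{n\ge0}\frac{x^n}{n!}\sum_{j\ge0}(-1)^{n-j}{n\brack j}(n\theta)^{j}\sum_{k\ge0}\binom{j}{k}\frac{y^{k}}{(n\theta)^{k}}\cdot(\text{adjust indices}),
\]
i.e. after resumming the $k$-sum by the binomial theorem, to $\exp(y\,\varphi(\theta;x))=\sum_{n\ge0}\frac{x^n}{n!}(-1)^{n}\sum_j{n\brack j}(-n\theta)^{j}\bigl(1+y/(n\theta)\bigr)^{?}$, which by the $k=1$ Stirling expansion is $\sum_n \frac{x^n}{n!}\prod_{a=?}^{?}(\text{linear in }n\theta\text{ and }y)$. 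Concretely I expect this collapses to a \emph{Lagrange inversion} statement: $\varphi(\theta;x)$ is defined so that $z=1-e^{\varphi(1/N;-x)}$ inverts $x\mapsto$ (something algebraic) in Proposition~\ref{prop:varphi}, and the general-$\theta$ series $\varphi(\theta;x)$ should likewise be characterized as the compositional inverse (or a power thereof) of an explicit elementary function; feeding this into the Lagrange–Bürmann formula produces the Stirling-number coefficients directly, with $\binom{j-1}{k-1}$ arising as the standard binomial coefficient in the Lagrange inversion formula for $k$-th powers. The main obstacle, and the step I would spend the most effort on, is pinning down the correct functional equation satisfied by $\varphi(\theta;x)$ for general $\theta$ (generalizing \eqref{eq:alpha-varphi}–\eqref{eq:alpha-factorize} from $\theta=1/N$) so that Lagrange inversion applies cleanly; once that equation is in hand, the rest is a bookkeeping exercise with the Stirling-number identities from Appendix~\ref{sec:app-stirling}. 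Should the Lagrange-inversion route prove awkward, the fallback is a direct induction on $k$, using $\varphi^{k}=\varphi\cdot\varphi^{k-1}$, the $k=1$ base case above, and the Pascal-type recursion $\binom{j-1}{k-1}=\binom{j-2}{k-2}+\binom{j-2}{k-1}$ together with the recursion ${n\brack j}={n-1\brack j-1}+(n-1){n-1\brack j}$ for Stirling numbers — matching coefficients of $x^n/n!$ on both sides — but this will require tracking the $(n\theta)^{j-k}$ factors through a convolution and is the more computational of the two options.
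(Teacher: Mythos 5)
Your reduction of the statement to a bivariate identity for $e^{\lambda\varphi(\theta;x)}$ is the right move, and it is in fact how the paper proceeds: expanding $e^{\lambda\varphi(\theta;x)}=\sum_{k}\lambda^{k}(\varphi(\theta;x))^{k}/k!$ and comparing with
\begin{equation*}
e^{\lambda\varphi(\theta;x)}=1+\lambda\sum_{n\ge 1}\frac{x^{n}}{n!}\prod_{a=1}^{n-1}(\lambda+n\theta-a),
\end{equation*}
via $\prod_{a=1}^{n-1}(\lambda+n\theta-a)=\sum_{j\ge1}(-1)^{n-j}{n\brack j}(\lambda+n\theta)^{j-1}$ and the binomial theorem on $(\lambda+n\theta)^{j-1}$, gives the proposition immediately. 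But this closed form for $e^{\lambda\varphi(\theta;x)}$ is precisely the nontrivial content (Lemma \ref{lem:exp-strange} in the paper), and your proposal never establishes it. The $k=1$ case is the trivial Stirling expansion; the genuine difficulty — which you correctly diagnose as the clash between $(n_i\theta)$ and $(n\theta)$ in the multinomial convolution — is exactly what this lemma resolves, and you leave it as ``the main obstacle, and the step I would spend the most effort on.'' The paper proves the lemma by translating the defining ODE for $e^{\lambda\varphi}$ into a recurrence for its coefficients and verifying that recurrence with the polynomial identity of Lemma \ref{lem:key-strange}; none of that, nor any substitute for it, appears in your proposal. Your displayed ``equivalent bivariate identity'' also contains unresolved placeholders and is not correct as written.

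The Lagrange-inversion route you gesture at is viable in principle: the functional equation is $e^{\varphi(\theta;x)}-xe^{\theta\varphi(\theta;x)}=1$ (equation \eqref{eq:varphi-rel}), so $w=\varphi(\theta;x)$ inverts $x=(e^{w}-1)e^{-\theta w}$, and Lagrange--B\"urmann applied to $e^{\lambda w}$ reduces the claim to extracting $[w^{n-1}]$ from $e^{(\lambda+n\theta)w}\bigl(w/(e^{w}-1)\bigr)^{n}$, which is a N\"orlund-polynomial evaluation. However, two things block this as a complete argument: in the paper the functional equation \eqref{eq:varphi-rel} is itself \emph{derived from} Lemma \ref{lem:exp-strange}, so you would need an independent proof of it directly from the series \eqref{eq:def-varphi} to avoid circularity; and the coefficient extraction $[w^{n-1}]e^{sw}\bigl(w/(e^{w}-1)\bigr)^{n}=\frac{1}{(n-1)!}\prod_{a=1}^{n-1}(s-a)$ is a nontrivial classical identity that would need proof or citation. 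Neither is supplied, so the proposal as it stands has a genuine gap at its central step.
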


See Appendix \ref{sec:app2} for the proof of
Proposition \ref{prop:varphi-power}.

For $n\ge 1$, we set $H_{0}(n)=1$ and
\begin{align*}
        H_{r}(n)=\sum_{0<m_{1}<\cdots <m_{r}<n}\frac{1}{m_{1}\cdots m_{r}}
\end{align*}
for $r\ge 1$.
Note that $H_{r}(n)=0$ if $1\le n\le r$.
For $m \ge 0$ and $n_{1}, n_{2} \ge 1$, we set
\begin{align*}
        C_{m}(n_{1}, n_{2})=\sum_{a=0}^{m}(-1)^{m-a}
        H_{a}(n_{1})H_{m-a}(n_{2}).
\end{align*}
We have $C_{m}(n_{1}, n_{2})=0$ if $m\ge n_{1}+n_{2}-1$.

\begin{thm}
        For $N\ge 2$ and $L \ge 1$, it holds that
        \begin{align}
                 &
                \frac{1}{1+(-1)^{(N-1)L}\hbar^{-NL}e_{N}^{\circ_{\hbar} L}X}
                \label{eq:main-eN-explicit}    \\
                 & =\exp_{\ast_{\hbar}}{\left(
                \frac{N}{(N-1)L}\sum_{n\ge 1}
                \frac{X^{n}}{n^{2}\binom{NLn}{Ln}}
                \sum_{k=2}^{NLn}C_{k-2}(Ln, (N-1)Ln)\hbar^{-k}\phi_{k}
                \right)}.
                \nonumber
        \end{align}
\end{thm}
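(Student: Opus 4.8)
The plan is to derive \eqref{eq:main-eN-explicit} from Theorem \ref{thm:main2} by expanding each power $\varphi(1/N;\,\cdot\,)^{k}$ via Proposition \ref{prop:varphi-power}, averaging the right-hand side of \eqref{eq:main-eN} over the $NL$-th roots of unity, and matching the resulting numerical coefficients with the numbers $C_{k-2}$.

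First I would substitute $x=\epsilon_{NL}^{m}X$ in Proposition \ref{prop:varphi-power} and sum over $m=1,\dots,NL$. Since $\sum_{m=1}^{NL}\epsilon_{NL}^{m\ell}$ equals $NL$ if $NL\mid\ell$ and $0$ otherwise, only the terms with $\ell$ a multiple of $NL$ survive; writing $\ell=NLn$ (so $\ell/N=Ln$) shows that both sides of \eqref{eq:main-eN} are power series in $X^{NL}$. The formal substitution $X^{NL}\mapsto(-1)^{(N-1)L}X$ then turns the left-hand side into $1/(1+(-1)^{(N-1)L}\hbar^{-NL}e_{N}^{\circ_{\hbar} L}X)$, i.e.\ the left-hand side of \eqref{eq:main-eN-explicit}, and, after combining the factor $(-1)^{n-j}$ with the sign from the substitution via $(-1)^{(2N-1)Ln}=(-1)^{Ln}$, rewrites the exponent of \eqref{eq:main-eN} as
\begin{equation*}
\sum_{n\ge1}X^{n}\sum_{k\ge2}\hbar^{-k}\phi_{k}\left(-NL(-1)^{Ln}\frac{1}{(NLn)!}\sum_{j\ge k}(-1)^{j}{NLn\brack j}\binom{j-1}{k-1}(Ln)^{j-k}\right).
\end{equation*}
Because the top term of $\hbar^{-k}\phi_{k}$ is $(k-1)!\,\hbar^{-k}g_{k}$, the elements $\hbar^{-k}\phi_{k}$ $(k\ge2)$ are linearly independent over $\mathbb{Q}[[X]]$, so \eqref{eq:main-eN-explicit} is equivalent to the scalar identity asserting that the bracketed coefficient above equals $\frac{N}{(N-1)Ln^{2}\binom{NLn}{Ln}}\,C_{k-2}(Ln,(N-1)Ln)$ for all $N\ge2$, $L\ge1$, $n\ge1$, $k\ge2$.

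This scalar identity is the heart of the matter. Setting $M=NLn$, $P=Ln$ and $Q=(N-1)Ln=M-P$, I would use $\frac{1}{(l-1)!}{l\brack k}=H_{k-1}(l)$ (from Appendix \ref{sec:app-stirling}) to rewrite $\frac{1}{M!}{M\brack j}=\frac{1}{M}H_{j-1}(M)$. Since $\sum_{i\ge0}H_{i}(M)t^{i}=g(t):=\binom{M-1+t}{M-1}=\frac{1}{(M-1)!}\prod_{m=1}^{M-1}(t+m)$, the sum $\sum_{j\ge k}(-1)^{j}H_{j-1}(M)\binom{j-1}{k-1}P^{j-k}$ equals $(-1)^{k}$ times the coefficient of $s^{k-1}$ in $g(s-P)$. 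Since $1\le P\le M-1$, the relevant product factors as
\begin{equation*}
\prod_{m=1}^{M-1}(s-P+m)=s\prod_{a=1}^{P-1}(s-a)\prod_{b=1}^{Q-1}(s+b),
\end{equation*}
and the signed and unsigned Stirling expansions of the falling and rising factorials, together with ${P\brack r+1}=(P-1)!\,H_{r}(P)$, give $\prod_{a=1}^{P-1}(s-a)=(P-1)!\sum_{r\ge0}(-1)^{P-1-r}H_{r}(P)s^{r}$ and $\prod_{b=1}^{Q-1}(s+b)=(Q-1)!\sum_{r\ge0}H_{r}(Q)s^{r}$. Extracting the coefficient of $s^{k-1}$ of $g(s-P)$ therefore produces the convolution $\sum_{a=0}^{k-2}(-1)^{a}H_{a}(P)H_{k-2-a}(Q)$, which is exactly $(-1)^{k}C_{k-2}(P,Q)$ by the definition of $C_{k-2}$. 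Collecting the factorial prefactors and using $\binom{M}{P}=M!/(P!\,Q!)$, $NL=M/n$ and $(N-1)L=Q/n$ then identifies the bracketed coefficient with the claimed value, finishing the proof.

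I expect the scalar identity to be the main obstacle: one must notice that the alternating Stirling sum is a derivative of $\binom{M-1+t}{M-1}$ evaluated at the negative integer $t=-P$, and that the factorization of $\prod_{m=1}^{M-1}(t+m)$ at that point is precisely what splits the coefficient into a product of the two harmonic sums $H_{a}(Ln)$ and $H_{k-2-a}((N-1)Ln)$ packaged in $C_{k-2}$. The remaining steps — the roots-of-unity averaging, the substitution $X^{NL}\mapsto(-1)^{(N-1)L}X$, and the sign bookkeeping — are routine, and specializing $N=2$, $L=1$ recovers the $\arcsin$ series behind Bachmann's identity, which serves as a consistency check.
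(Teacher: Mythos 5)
Your proposal is correct and follows essentially the same route as the paper: reduce \eqref{eq:main-eN} via Proposition \ref{prop:varphi-power} and roots-of-unity filtering to the scalar evaluation of the alternating Stirling sum, then evaluate it by exploiting the simple zero of $\prod_{a=1}^{NLn-1}(x-a)$ at $x=Ln$, which splits the product into the two factorials whose harmonic-number expansions produce $C_{k-2}(Ln,(N-1)Ln)$. The only cosmetic difference is that you extract the Taylor coefficient of $\binom{M-1+t}{M-1}$ at $t=-Ln$ by a polynomial convolution, whereas the paper computes the same quantity as $\frac{1}{(k-1)!}\bigl(\frac{d}{dx}\bigr)^{k-1}\prod_{a=1}^{NLn-1}(x-a)\big|_{x=Ln}$ via the Leibniz rule.
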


\begin{proof}
        For $k\ge 1$, we define the polynomial $D_{k}(x)$ by
        \begin{align*}
                D_{k}(x)=\sum_{j\ge k}(-1)^{NLn-j}{NLn \brack j}
                \binom{j-1}{k-1}x^{j-k}.
        \end{align*}
        {}From Proposition \ref{prop:varphi-power} and
        \begin{align*}
                \sum_{m=1}^{NL}(\epsilon_{NL}^{m}x)^{n}=\left\{
                \begin{array}{ll}
                        NL \, x^{n} & (NL \mid  n)          \\
                        0           & (\textrm{otherwise}),
                \end{array}
                \right.
        \end{align*}
        we see that
        \begin{align*}
                \frac{1}{k!}\sum_{m=1}^{NL}(\varphi(1/N; \epsilon^{m}_{NL}x))^{k}=
                NL\sum_{n\ge 1}\frac{x^{NLn}}{(NLn)!}D_{k}(Ln).
        \end{align*}
        It follows from \eqref{eq:shifted-factorial} that
        \begin{align*}
                D_{k}(x)
                 & =\frac{1}{(k-1)!}\left(\frac{d}{dx}\right)^{k-1}
                \sum_{j\ge 1}(-1)^{NLn-j}{NLn \brack j}
                x^{j}                                               \\
                 & =\frac{1}{(k-1)!}\left(\frac{d}{dx}\right)^{k-1}
                \prod_{a=1}^{NLn-1}(x-a)                            \\
                 & =\sum_{\substack{p+q+r=k-1                       \\ p, q, r\ge 0}}
                \frac{1}{p!q!r!}
                \left\{\left(\frac{d}{dx}\right)^{p}\prod_{a=1}^{Ln-1}(x-a)\right\}
                \left\{\left(\frac{d}{dx}\right)^{q}(x-Ln)\right\}
                \left\{\left(\frac{d}{dx}\right)^{r}\prod_{a=Ln+1}^{NLn-1}(x-a)\right\}.
        \end{align*}
        We set $x=Ln$. Then only the terms with $q=1$ remain.
        Hence
        \begin{align*}
                D_{k}(Ln)
                 & =\sum_{\substack{p+r=k-2                                     \\ p, r\ge 0}}
                \frac{1}{p!r!}
                \left\{\left(\frac{d}{dx}\right)^{p}\prod_{a=1}^{Ln-1}(x-a)\right\}
                \left\{\left(\frac{d}{dx}\right)^{r}\prod_{a=Ln+1}^{NLn-1}(x-a)\right\}\bigg|_{x=Ln}
                \\
                 & =(Ln-1)!((N-1)Ln-1)!
                \sum_{\substack{p+r=k-2                                         \\ p, r\ge 0}}
                (-1)^{(N-1)Ln-1-r}
                H_{p}(Ln)H_{r}((N-1)Ln)
                \\
                 & =(-1)^{(N-1)Ln-1}\frac{(NLn)!}{(N-1)(Ln)^{2}\binom{NLn}{Ln}}
                C_{k-2}(Ln, (N-1)Ln).
        \end{align*}
        Therefore, the exponent in the right-hand side of \eqref{eq:main-eN}
        is equal to
        \begin{align*}
                 &
                {}-\sum_{k\ge 2}\frac{\hbar^{-k}}{k!}\phi_{k}
                \sum_{m=1}^{NL}\left(\varphi(1/N; \epsilon_{NL}^{m}X)\right)^{k}
                =-\sum_{k\ge 2}\hbar^{-k}\phi_{k}NL\sum_{n\ge 1}
                \frac{x^{NLn}}{(NLn)!}D_{k}(Ln) \\
                 & =\frac{N}{(N-1)L}
                \sum_{n\ge 1}\frac{((-1)^{(N-1)L}X^{NL})^{n}}{n^2 \binom{NLn}{Ln}}
                \sum_{k\ge 2}C_{k-2}(Ln, (N-1)Ln)\hbar^{-k}\phi_{k}.
        \end{align*}
        By changing $X^{NL} \to (-1)^{(N-1)L}X$, we obtain the desired identity.
\end{proof}

\begin{rem}
        If $N=2$, we have $C_{m}(Ln, (N-1)Ln)=C_{m}(Ln, Ln)$ for $m\ge 0$.
        It holds that, for $m\ge 0$ and $n\ge 1$,
        \begin{align*}
                C_{m}(n, n)=\sum_{a=0}^{m}(-1)^{m-a}H_{a}(n)H_{m-a}(n)=
                \left\{
                \begin{array}{ll}
                        (-1)^{m/2}H_{m/2}^{(2)}(n) & (\hbox{$m$: even}) \\
                        0                          & (\hbox{$m$: odd})
                \end{array}
                \right.
        \end{align*}
        where $H_{0}^{(2)}(n)=1$ and
        \begin{align*}
                H_{r}^{(2)}(n)=\sum_{0<m_{1}<\cdots <m_{r}<n}
                \frac{1}{m_{1}^{2}\cdots m_{r}^{2}}
        \end{align*}
        for $r\ge 1$.
        It is known that
        \begin{align}
                \left(2\arcsin{(X/2)}\right)^{2k}=(2k)!
                \sum_{n\ge k} \frac{H_{k-1}^{(2)}(n)}{n^2 \binom{2n}{n}}X^{2n}
                \label{eq:arcsin-expansion}
        \end{align}
        for $k\ge 1$ (see \cite{BC}).
        By changing $X\to X^{2L}$ in \eqref{eq:main-eN-explicit} and using
        \eqref{eq:arcsin-expansion},
        we recover \eqref{eq:main-e2}.
\end{rem}

Applying the map $Z_{q}$ to both sides of \eqref{eq:main-eN-explicit},
we obtain the following identity.

\begin{cor}
        Suppose that $N\ge 2$ and $L\ge 1$.
        We set
        \begin{align*}
                A_{r}^{(N, L)}(q)=
                \sum_{0<m_{1}<\cdots <m_{r}}
                \left(\frac{q^{(N-1)(m_{1}+\cdots +m_{r})}}
                {(1-q^{m_{1}})^{N} \cdots (1-q^{m_{r}})^{N}}
                \right)^{L}
        \end{align*}
        for $r\ge 1$.
        Then it holds that
        \begin{align*}
                 &
                1+\sum_{r\ge 1}(-1)^{((N-1)L-1)r}A_{r}^{(N, L)}(q)X^{r} \\
                 & =
                \exp{\left(
                \frac{N}{(N-1)L}\sum_{n\ge 1}
                \frac{X^{n}}{n^{2}\binom{NLn}{Ln}}
                \sum_{k=2}^{NLn}C_{k-2}(Ln, (N-1)Ln)G_{k}(q)
                \right)}.
        \end{align*}
\end{cor}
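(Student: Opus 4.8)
The plan is to obtain the asserted identity by applying the homomorphism $Z_{q}$ to both sides of \eqref{eq:main-eN-explicit} and identifying each side with the corresponding side of the stated formula. For the left-hand side I would set $v=e_{N}^{\circ_{\hbar}L}\in\mathfrak{z}$ (a $\mathcal{C}$-linear combination of $g_{(N-1)L},\dots,g_{NL}$, so that its concatenation powers lie in $\widehat{\mathfrak{H}^{0}}$) and expand the geometric series in $\widehat{\mathfrak{H}^{1}}[[X]]$, obtaining
\[
\frac{1}{1+(-1)^{(N-1)L}\hbar^{-NL}v\,X}
=1+\sum_{r\ge 1}(-1)^{\,r((N-1)L+1)}\hbar^{-NLr}\,v^{r}X^{r},
\]
where $v^{r}$ is the $r$-fold concatenation product. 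Applying $Z_{q}$ coefficientwise and using the $\mathcal{C}$-linearity of $Z_{q}$ and of $I_{q}(\,\cdot\,|\,m)$, together with the multiplicativity \eqref{eq:I-hom} iterated $L$ times, gives $I_{q}(v\,|\,m)=I_{q}(e_{N}\,|\,m)^{L}=\bigl((1-q)^{N}q^{(N-1)m}/(1-q^{m})^{N}\bigr)^{L}$, and hence $Z_{q}(v^{r})=(1-q)^{NLr}\sum_{0<m_{1}<\cdots<m_{r}}\prod_{a=1}^{r}q^{(N-1)Lm_{a}}/(1-q^{m_{a}})^{NL}$. Since $\hbar^{-NLr}$ acts as multiplication by $(1-q)^{-NLr}$, the powers of $1-q$ cancel, so $Z_{q}(\hbar^{-NLr}v^{r})=A_{r}^{(N,L)}(q)$; using $(-1)^{\,r((N-1)L+1)}=(-1)^{\,r((N-1)L-1)}$, the image of the left-hand side is exactly $1+\sum_{r\ge 1}(-1)^{((N-1)L-1)r}A_{r}^{(N,L)}(q)X^{r}$.

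For the right-hand side I would use that $Z_{q}$ is a homomorphism with respect to $\ast_{\hbar}$, hence commutes with $\exp_{\ast_{\hbar}}$, and that $Z_{q}(\hbar^{-k}\phi_{k})=G_{k}(q)$ by Proposition \ref{prop:phik-G}. Applying $Z_{q}$ to the exponent in \eqref{eq:main-eN-explicit} thus replaces each $\hbar^{-k}\phi_{k}$ by $G_{k}(q)$ and leaves the numerical coefficients intact, yielding precisely the exponent in the asserted formula. Combining the two computations completes the proof.

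This is essentially a direct transport of \eqref{eq:main-eN-explicit} along $Z_{q}$, so there is no real obstacle; the only points requiring care are the sign and $\hbar$-power bookkeeping in the geometric expansion and the fact that $1/(1+\cdots X)$ must be read as the concatenation-geometric series in $\widehat{\mathfrak{H}^{1}}[[X]]$ so that $Z_{q}$ can be applied term by term.
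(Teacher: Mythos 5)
Your proposal is correct and is exactly the paper's argument: the paper derives the corollary by applying $Z_{q}$ to both sides of \eqref{eq:main-eN-explicit}, and your sign/$\hbar$-power bookkeeping for the geometric expansion and the identification $Z_{q}(\hbar^{-NLr}(e_{N}^{\circ_{\hbar}L})^{r})=A_{r}^{(N,L)}(q)$, $Z_{q}(\hbar^{-k}\phi_{k})=G_{k}(q)$ all check out. No issues.
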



\section{Application to $\omega$MZVs}\label{sec:omega-MZV}

\subsection{Definition of $\omega$MZV}

We assume that $\omega >0$.
Let $M(\mathbb{C})$ be
the field of meromorphic functions on $\mathbb{C}$.
We endow $M(\mathbb{C})$ with the structure of a $\mathcal{C}$-module
such that $\hbar^{\pm 1}$ acts as multiplication by $(2\pi i \omega)^{\pm 1}$.
Then we define the $\mathcal{C}$-linear map
$J_{\omega}: \mathfrak{z} \to M(\mathbb{C})$ by
\begin{align*}
        J_{\omega}(e_{1}-g_{1} \, | \, t)=2\pi i \omega, \qquad
        J_{\omega}(g_{k} \, | \, t)=\left(
        \frac{2\pi i \omega \, e^{2\pi i \omega t}}{1-e^{2\pi i \omega t}}
        \right)^{k}
        =\left(        \frac{2\pi i \omega }{e^{-2\pi i \omega t}-1}
        \right)^{k}
\end{align*}
for $k \ge 1$.

For $u_{1}, \ldots , u_{r} \in \mathcal{A}$ such that $u_{r}\not=e_{1}-g_{1}$,
we consider the multiple integral
\begin{align}
        Z_{\omega}(u_{1}\cdots u_{r})=
        \prod_{a=1}^{r}\int_{{}-\epsilon+i\,\mathbb{R}}\frac{dt_{a}}{e^{2\pi i t_{a}}-1}
        \prod_{a=1}^{r}J_{\omega}(u_{a}\,|\, t_{1}+\cdots +t_{a}),
        \label{eq:def-omega-zeta}
\end{align}
where $\epsilon$ is a constant satisfying $0<\epsilon<\min\{1, 1/r \omega\}$.

\begin{prop}\label{prop:omega-zeta-convergence}
        If $\omega>0$, the integral \eqref{eq:def-omega-zeta} is absolutely convergent.
        It does not depend on $\epsilon$.
\end{prop}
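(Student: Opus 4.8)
The plan is to parametrize the contours, estimate the integrand in absolute value by an explicit product of $\min(1,e^{\pm\cdots})$ factors that is manifestly integrable over $\mathbb{R}^{r}$, and then obtain independence of $\epsilon$ by shifting the contours one coordinate at a time, the same estimates killing the connecting segments at infinity. Writing $t_{a}=-\epsilon+iy_{a}$ ($y_{a}\in\mathbb{R}$) and $\sigma_{a}=y_{1}+\cdots+y_{a}$ (so $\sigma_{0}=0$), one has $t_{1}+\cdots+t_{a}=-a\epsilon+i\sigma_{a}$. First I would note that the contour avoids every pole: a pole of $1/(e^{2\pi i t_{a}}-1)$ needs $t_{a}\in\mathbb{Z}$, excluded by $0<\epsilon<1$; a pole of $J_{\omega}(g_{k}\,|\,t_{1}+\cdots+t_{a})$ needs $\sigma_{a}=0$ and $-\omega a\epsilon\in\mathbb{Z}$, excluded by $0<\omega a\epsilon\le\omega r\epsilon<1$. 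These are precisely the two uses of the hypothesis $0<\epsilon<\min(1,1/(r\omega))$.

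Next I would record the elementary bounds, with constants depending only on $\epsilon,\omega,r$,
\[
\Bigl|\frac{1}{e^{2\pi i t_{a}}-1}\Bigr|\le C\min\!\bigl(1,e^{2\pi(\sigma_{a}-\sigma_{a-1})}\bigr),\qquad \bigl|J_{\omega}(g_{k}\,|\,t_{1}+\cdots+t_{a})\bigr|\le C\min\!\bigl(1,e^{-2\pi\omega k\sigma_{a}}\bigr),
\]
together with $|J_{\omega}(e_{1}-g_{1}\,|\,\cdot)|=2\pi\omega$; they follow from $|e^{2\pi i t_{a}}-1|\asymp 1+e^{-2\pi(\sigma_{a}-\sigma_{a-1})}$ and $|e^{-2\pi i\omega(t_{1}+\cdots+t_{a})}-1|\asymp 1+e^{2\pi\omega\sigma_{a}}$, the latter estimate being where the hypothesis $\omega>0$ is used. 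Since $u_{r}\in\mathcal{A}$ with $u_{r}\neq e_{1}-g_{1}$, we have $u_{r}=g_{k_{r}}$ with $k_{r}\ge1$; bounding every $J$-factor by a constant except the one at $a=r$ and changing variables from $(y_{a})$ to $(\sigma_{a})$ (unit Jacobian) gives
\[
\int_{\mathbb{R}^{r}}\bigl|\,\text{integrand}\,\bigr|\,\textstyle\prod_{a}dy_{a}\ \le\ C'\int_{\mathbb{R}^{r}}\Bigl(\prod_{a=1}^{r}\min\!\bigl(1,e^{2\pi(\sigma_{a}-\sigma_{a-1})}\bigr)\Bigr)\min\!\bigl(1,e^{-2\pi\omega k_{r}\sigma_{r}}\bigr)\,\prod_{a}d\sigma_{a}.
\]
Using the identity $\min(1,e^{2\pi x})=e^{\pi x-\pi|x|}$, the first product equals $e^{\pi\sigma_{r}}\prod_{a}e^{-\pi|\sigma_{a}-\sigma_{a-1}|}$; integrating out $\sigma_{1},\dots,\sigma_{r-1}$ produces the $r$-fold convolution of $x\mapsto e^{-\pi|x|}$, which by an easy induction satisfies $(e^{-\pi|\cdot|})^{*r}(\sigma_{r})\le C_{r}(1+|\sigma_{r}|)^{r-1}e^{-\pi|\sigma_{r}|}$. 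The remaining one-dimensional integral is then bounded by $C'C_{r}\int_{\mathbb{R}}(1+|\sigma_{r}|)^{r-1}\min(1,e^{2\pi\sigma_{r}})\min(1,e^{-2\pi\omega k_{r}\sigma_{r}})\,d\sigma_{r}$, whose integrand decays exponentially as $\sigma_{r}\to-\infty$ and as $\sigma_{r}\to+\infty$ (using $\omega k_{r}>0$ at $+\infty$). This proves absolute convergence.

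For independence of $\epsilon$, fix $\epsilon,\epsilon'\in(0,\min(1,1/(r\omega)))$ and move the contours from $\operatorname{Re}t_{a}=-\epsilon$ to $\operatorname{Re}t_{a}=-\epsilon'$ one coordinate at a time. Absolute convergence holds verbatim for any mixed choice of contours (the estimates only used $\operatorname{Re}t_{b}\in(-1,0)$ and $\sum_{b\le a}|\operatorname{Re}t_{b}|<1/\omega$), so Fubini lets me isolate the $j$-th variable; with the others fixed on their contours, the integrand is holomorphic in the vertical strip between $\operatorname{Re}=-\epsilon$ and $\operatorname{Re}=-\epsilon'$ — a pole would need $t_{j}\in\mathbb{Z}$ or $\omega(t_{1}+\cdots+t_{a})\in\mathbb{Z}$ for some $a\ge j$, both excluded because $\operatorname{Re}t_{j}\in(-1,0)$ and $\sum_{b\le a}|\operatorname{Re}t_{b}|<1/\omega$ — and it decays like $e^{2\pi\operatorname{Im}t_{j}}$ as $\operatorname{Im}t_{j}\to-\infty$ and like $e^{-2\pi\omega k_{r}\operatorname{Im}t_{j}}$ as $\operatorname{Im}t_{j}\to+\infty$, so the horizontal sides of the shifting rectangle contribute nothing in the limit. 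Cauchy's theorem gives equality of the $t_{j}$-integrals over the two lines, and iterating over $j=1,\dots,r$ completes the proof. I expect the crux to be the joint integrability estimate: each factor $1/(e^{2\pi i t_{a}}-1)$ decays only as $\operatorname{Im}t_{a}\to-\infty$ and the one $J$-factor kept (at $a=r$) decays only as $\sigma_{r}\to+\infty$, so one must see that together they control every direction in which $\sigma$ escapes to infinity; the rewriting $\min(1,e^{2\pi x})=e^{\pi x-\pi|x|}$, which turns the first product into a convolution power of $e^{-\pi|x|}$, is what makes this transparent, and everything else is routine.
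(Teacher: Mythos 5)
Your proof is correct, and up to the key estimate it runs parallel to the paper's: after bounding every harmless factor by a constant, both arguments dominate the integrand by a constant times
\[
\prod_{a=1}^{r}\min\!\bigl(1,e^{2\pi(\sigma_{a}-\sigma_{a-1})}\bigr)\cdot\min\!\bigl(1,e^{-2\pi\omega k_{r}\sigma_{r}}\bigr),
\]
which is exactly the bound $C\exp(\pi S(u))$ that the paper imports from the proof of Proposition 3.1 of the earlier $\omega$MZV paper. Where you genuinely diverge is in proving integrability of this dominating expression. The paper establishes a \emph{pointwise} estimate $S(u)\le -(1-|1-\omega-\rho|)\sum_{a}|u_{a}|$ by introducing an auxiliary parameter $\rho\in(-\omega,\min\{\omega,2-\omega\})$ and splitting $(1-\omega)\sum_a u_{a}=\rho\sum_a u_{a}+(1-\omega-\rho)\sum_a u_{a}$; this yields a manifestly integrable majorant $e^{-c\sum_a|u_{a}|}$ in one stroke. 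You instead exploit the chain structure of the product via $\min(1,e^{2\pi x})=e^{\pi x-\pi|x|}$, integrate out $\sigma_{1},\dots,\sigma_{r-1}$ as an $r$-fold convolution of $e^{-\pi|x|}$, and finish with a one-dimensional integral whose convergence at $+\infty$ is where $\omega k_{r}>0$ enters. The paper's $\rho$-trick is shorter once found; your convolution argument is more systematic and makes visible exactly which factor controls each escape direction. You also spell out the pole-avoidance and the coordinate-by-coordinate contour shift for independence of $\epsilon$, which the paper asserts without detail; your justification there (mixed contours only need $\operatorname{Re}t_{b}\in(-1,0)$ and $\omega\sum_{b\le a}|\operatorname{Re}t_{b}|<1$, and the horizontal sides die by the same exponential bounds) is sound.
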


\begin{rem}
        Although we assumed that $0<\omega<2$ for absolute convergence
        in the previous paper \cite{T-omega},
        the condition can be relaxed to $\omega>0$ as shown in the following proof.
\end{rem}

\begin{proof}
        {}From the assumption we can set $u_{r}=g_{k}$ with $k \ge 1$.
        Set $t_{a}=-\epsilon+iu_{a}$ with $u_{a} \in \mathbb{R}$ for  $1\le a \le r$.
        Then the integrand is estimated from above by
        $C\exp{(\pi S(u))}$, where $C$ is a positive constant which depends on
        $\epsilon, \omega, r$ and $k$, and $S(u)$ is given by
        \begin{align*}
                S(u)=\sum_{a=1}^{r}(u_{a}-|u_{a}|)-k\omega
                \left(\sum_{a=1}^{r}u_{a}+\left|\sum_{a=1}^{r}u_{a}\right|\right)
        \end{align*}
        (see the proof of \cite[Proposition 3.1]{T-omega}).
        Since $\omega>0$ and $k\ge 1$, we have
        \begin{align*}
                S(u) \le
                \sum_{a=1}^{r}(u_{a}-|u_{a}|)-\omega
                \left(\sum_{a=1}^{r}u_{a}+\left|\sum_{a=1}^{r}u_{a}\right|\right)
                =(1-\omega)\sum_{a=1}^{r}u_{a}-\sum_{a=1}^{r}|u_{a}|-
                \omega\left|\sum_{a=1}^{r}u_{a}\right|.
        \end{align*}
        Take a constant $\rho$ such that $-\omega<\rho<\min{\{\omega, 2-\omega\}}$.
        The right-hand side is equal to
        \begin{align*}
                \rho\sum_{a=1}^{r}u_{a}-\omega\left|\sum_{a=1}^{r}u_{a}\right|+
                \sum_{a=1}^{r}((1-\omega-\rho)u_{a}-|u_{a}|).
        \end{align*}
        It is estimated from above by
        \begin{align*}
                |\rho|\left|\sum_{a=1}^{r}u_{a}\right|-\omega\left|\sum_{a=1}^{r}u_{a}\right|+
                \sum_{a=1}^{r}(|1-\omega-\rho||u_{a}|-|u_{a}|).
        \end{align*}
        Since $|\rho|<\omega$, we obtain
        \begin{align*}
                S(u)\le -(1-|1-\omega-\rho|)\sum_{a=1}^{r}|u_{a}|.
        \end{align*}
        It holds that $|1-\omega-\rho|<1$ because $-\omega<\rho<2-\omega$.
        Therefore the integral \eqref{eq:def-omega-zeta} is absolutely convergent
        and independent of $\epsilon$.
\end{proof}

We define, by abuse of notation, the $\mathcal{C}$-linear map
$Z_{\omega}: \widehat{\mathfrak{H}^{0}} \to \mathbb{C}$ by
$Z_{\omega}(1)=1$ and \eqref{eq:def-omega-zeta}.
We call the value $Z_{\omega}(w)$ with $w \in \widehat{\mathfrak{H}^{0}}$
an \textit{$\omega$-deformed multiple zeta value} ($\omega$MZV).

\begin{prop}\cite[Theorem 4.5]{T-omega}
        For any $w, w' \in \widehat{\mathfrak{H}^{0}}$,
        it holds that $Z_{\omega}(w \ast_{\hbar} w')=Z_{\omega}(w)Z_{\omega}(w')$.
\end{prop}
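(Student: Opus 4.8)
The plan is to reduce to the generating letters $g_{k}$ and then run the standard ``stuffle'' argument for $Z_{\omega}$, with contour integration playing the role that nested summation plays for $Z_{q}$. Since $Z_{\omega}$ is $\mathcal{C}$-linear with $Z_{\omega}(1)=1$ and $\ast_{\hbar}$ is $\mathcal{C}$-bilinear with unit $1$, it suffices to prove $Z_{\omega}(w\ast_{\hbar}w')=Z_{\omega}(w)Z_{\omega}(w')$ for admissible monomials $w=u_{1}\cdots u_{r}$ and $w'=v_{1}\cdots v_{s}$ with $u_{a},v_{b}\in\mathcal{A}$ and $u_{r},v_{s}\neq e_{1}-g_{1}$; here one uses that the integral \eqref{eq:def-omega-zeta} is defined for every word whose last letter is not $e_{1}-g_{1}$, so $Z_{\omega}$ makes sense on all expressions occurring below. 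I would induct on $r+s$, the case $rs=0$ being immediate from $1\ast_{\hbar}w=w$ and $Z_{\omega}(1)=1$.

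The argument rests on two ingredients. The first is the one-variable multiplicativity
\[
J_{\omega}(u\circ_{\hbar}v\mid t)=J_{\omega}(u\mid t)\,J_{\omega}(v\mid t)\qquad(u,v\in\mathfrak{z}),
\]
the exact analogue of \eqref{eq:I-hom}; it is checked on the basis $\{e_{1}-g_{1}\}\sqcup\{g_{k}\mid k\geq1\}$ straight from the definition of $J_{\omega}$, using $J_{\omega}(g_{k}\mid t)J_{\omega}(g_{l}\mid t)=\bigl(2\pi i\omega/(e^{-2\pi i\omega t}-1)\bigr)^{k+l}=J_{\omega}(g_{k+l}\mid t)$ and $J_{\omega}(e_{1}-g_{1}\mid t)=2\pi i\omega$. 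The second is the contour analogue of the decomposition $\sum_{m,n>0}=\sum_{m<n}+\sum_{m>n}+\sum_{m=n}$, namely the partial-fraction identity
\[
\frac{1}{(e^{a}-1)(e^{b}-1)}=\frac{1}{e^{a+b}-1}\Bigl(\frac{1}{e^{a}-1}+\frac{1}{e^{b}-1}+1\Bigr)
\]
for the kernel $1/(e^{2\pi it}-1)$, which moreover has a simple pole of residue $1/(2\pi i)$ along $\mathbb{Z}$.

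For the inductive step I would write $Z_{\omega}(w)Z_{\omega}(w')$ as a single iterated integral over the $r+s$ vertical contours (with the shift parameters taken small and compatible so that Proposition \ref{prop:omega-zeta-convergence} applies to all pieces at once), and then merge the two families of integration variables: using the partial-fraction identity on the kernel that couples the variables attached to $u_{r}$ and $v_{s}$, and returning the contours to their standard positions, one obtains two ``ordered'' iterated integrals (one ending in the letter $u_{r}$, the other in $v_{s}$) together with a residue term in which those two innermost variables coincide; by the one-variable multiplicativity the latter is exactly the $\omega$MZV integral attached to the merged letter $u_{r}\circ_{\hbar}v_{s}$. Combined with the inductive hypothesis applied to the products of the words of smaller total degree that appear, this gives
\[
Z_{\omega}(w)Z_{\omega}(w')=Z_{\omega}\bigl((w_{1}\ast_{\hbar}w_{1}'v_{s})u_{r}\bigr)+Z_{\omega}\bigl((w_{1}u_{r}\ast_{\hbar}w_{1}')v_{s}\bigr)+Z_{\omega}\bigl((w_{1}\ast_{\hbar}w_{1}')(u_{r}\circ_{\hbar}v_{s})\bigr),
\]
where $w_{1}=u_{1}\cdots u_{r-1}$ and $w_{1}'=v_{1}\cdots v_{s-1}$, and the right-hand side is $Z_{\omega}(w\ast_{\hbar}w')$ by property (ii) of $\ast_{\hbar}$ in Section \ref{sec:harmonic-product}.

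The hard part is the analytic justification of the contour manipulations. One has to verify that, inside the strip where the deformations take place, the only pole crossed is the simple one responsible for the merged letter — the poles of the $J_{\omega}$-factors, which lie on translates of $\tfrac{1}{\omega}\mathbb{Z}$, must be kept out of the way — that the connecting horizontal segments at $\operatorname{Im}t\to\pm\infty$ give no contribution, which follows from the exponential decay estimate used in the proof of Proposition \ref{prop:omega-zeta-convergence}, and that Fubini's theorem licenses the reorderings of integration needed to recognize the three pieces as genuine $\omega$MZV integrals. Once this single-crossing identity is in place, the combinatorics of merging two ordered words is the same bookkeeping as in the $q$MZV case and causes no further difficulty.
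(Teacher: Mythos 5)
Note first that the paper does not prove this proposition at all: it is imported verbatim from \cite[Theorem 4.5]{T-omega}, so there is no in-paper argument to measure yours against. Judged on its own terms, your sketch assembles the right raw materials: the multiplicativity $J_{\omega}(u\circ_{\hbar}v\,|\,t)=J_{\omega}(u\,|\,t)\,J_{\omega}(v\,|\,t)$ does hold on the basis $\mathcal{A}$ (it is the exact analogue of \eqref{eq:I-hom}); the kernel identity $\frac{1}{(e^{a}-1)(e^{b}-1)}=\frac{1}{e^{a+b}-1}\bigl(\frac{1}{e^{a}-1}+\frac{1}{e^{b}-1}+1\bigr)$ is correct; and picking up the simple pole of $1/(e^{2\pi i\tau}-1)$ at $\tau=0$ (residue $1/2\pi i$) when a contour is pushed past the coincidence locus of the two innermost variables is indeed the mechanism that can produce the diagonal term $u_{r}\circ_{\hbar}v_{s}$. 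Nevertheless, as written the argument has two genuine gaps.

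First, the induction does not close. You induct on $r+s$ with hypothesis $Z_{\omega}(v)Z_{\omega}(v')=Z_{\omega}(v\ast_{\hbar}v')$ for shorter words, but the three pieces produced by one application of the kernel identity are not of the form $Z_{\omega}(\hbox{shorter word})\cdot Z_{\omega}(\hbox{shorter word})$: they are iterated integrals in which the two outer chains of variables are still unmerged yet are now both anchored to a common innermost variable (in \eqref{eq:def-omega-zeta} the last letter's $J_{\omega}$-argument is the \emph{full} partial sum $t_{1}+\cdots+t_{r}$, so the object left after peeling off $u_{r}$, resp.\ $v_{s}$, resp.\ the merged letter is a \emph{parametrized} product of two shorter integrals, not a product of two $\omega$MZVs). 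What is needed is a relative version of the statement — the contour analogue of proving the stuffle relation for sums truncated at a common upper bound $M$ and inducting on that — and your sketch never formulates it. Second, the entire analytic content (that the only pole crossed is the one at the coincidence of the two innermost variables while the poles of the $J_{\omega}$-factors on translates of $\omega^{-1}\mathbb{Z}$ are avoided, that the deformed integrals converge, and that Fubini licenses the reorderings) is precisely where the substance of \cite[Theorem 4.5]{T-omega} must lie, and you explicitly defer all of it. Until the parametrized induction is set up and the contour bookkeeping is actually carried out, this is a plausible plan rather than a proof; the honest alternatives are to supply those details or to do what the paper does and cite \cite{T-omega}.
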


Therefore, we can apply the results in the previous section to
$\omega$MZVs.


\subsection{Function $G_{s}(\omega)$ and its series expression}

Here we calculate $Z_{\omega}(\hbar^{-k}\phi_{k})$ for $k\ge 1$,
which is a corresponding object to the $q$-series
$G_{k}(q)=\sum_{n\ge 1}\sigma_{k-1}(n)q^{n}$.

\begin{prop}\label{prop:G-omega}
        For $k\ge 1$, it holds that
        \begin{align*}
                Z_{\omega}(\hbar^{-k}\phi_{k})=
                \int_{-\epsilon+i\,\mathbb{R}}dt\,
                \frac{(-t)^{k-1}}{(e^{2\pi i t}-1)(e^{-2\pi i \omega t}-1)}.
        \end{align*}
\end{prop}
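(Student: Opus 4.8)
The plan is to start from the definition \eqref{eq:def-omega-zeta} specialized to the single-letter case, then expand $\phi_k$ via its definition \eqref{eq:def-phi} and re-sum using the Stirling number identity that appeared in the proof of Proposition \ref{prop:phik-G}. Concretely, since $\phi_k = \sum_{j=1}^k (j-1)!\,{k\brace j}\hbar^{k-j}g_j$, applying $\hbar^{-k}Z_\omega$ gives
\begin{align*}
        Z_\omega(\hbar^{-k}\phi_k)=\sum_{j=1}^k (j-1)!\,{k\brace j}\,(2\pi i\omega)^{-j}
        \int_{-\epsilon+i\,\mathbb{R}}\frac{dt}{e^{2\pi i t}-1}\,
        J_\omega(g_j\,|\,t),
\end{align*}
because for a single letter the multiple integral \eqref{eq:def-omega-zeta} collapses to one contour integral, and $\hbar^{-k}$ acts as multiplication by $(2\pi i\omega)^{-k}$ while $J_\omega(g_j\,|\,t)$ already carries a factor $(2\pi i\omega)^j$. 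Using $J_\omega(g_j\,|\,t)=\bigl(2\pi i\omega/(e^{-2\pi i\omega t}-1)\bigr)^{j}$, the $(2\pi i\omega)$ powers cancel and I am left with $\sum_{j=1}^k (j-1)!\,{k\brace j}\,(e^{-2\pi i\omega t}-1)^{-j}$ under the integral.

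The key step is then to recognize this finite sum as a single closed form in $t$. This is exactly the computation done inside the proof of Proposition \ref{prop:phik-G}: there the generating function $\sum_k \frac{T^{k-1}}{(k-1)!}\sum_j \frac{(j-1)!}{1}{k\brace j}w^j$ (with $w=q^m/(1-q^m)$) was evaluated, via \eqref{eq:stirling-second-exp}, to $\frac{we^T}{1-we^T}$ and then re-expanded as $\sum_{k\ge1}\frac{T^{k-1}}{(k-1)!}\sum_{n\ge1}n^{k-1}(\,\cdot\,)^n$, showing that $\sum_{j=1}^k (j-1)!\,{k\brace j}w^j$ equals the coefficient extraction giving $\bigl(\frac{d}{dy}\bigr)^{k-1}\frac{e^y}{1-e^y}$ evaluated appropriately. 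Here I take $w = 1/(e^{-2\pi i\omega t}-1)$, so $w/(1+w) = e^{2\pi i\omega t}$... more directly, setting $e^y = w/(1+w)$ one gets $\frac{e^y}{1-e^y}=w$ and the same manipulation yields
\begin{align*}
        \sum_{j=1}^k (j-1)!\,{k\brace j}\,\frac{1}{(e^{-2\pi i\omega t}-1)^{j}}
        =\Bigl(\tfrac{d}{dy}\Bigr)^{k-1}\frac{e^y}{1-e^y}\Big|_{e^y=e^{2\pi i\omega t}}
        =(-1)^{k-1}(2\pi i\omega)^{-(k-1)}(-t)^{k-1}\cdot\frac{?}{?}
\end{align*}
— more cleanly, substituting $y=2\pi i\omega t$ makes $\frac{d}{dy}=(2\pi i\omega)^{-1}\frac{d}{dt}$ and $\frac{e^y}{1-e^y}=\frac{1}{e^{-2\pi i\omega t}-1}$, giving $\sum_j(j-1)!{k\brace j}(e^{-2\pi i\omega t}-1)^{-j}=(2\pi i\omega)^{-(k-1)}\bigl(\tfrac{d}{dt}\bigr)^{k-1}\tfrac{1}{e^{-2\pi i\omega t}-1}$. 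I then plug this back into the integral and integrate by parts $k-1$ times in $t$ along the contour $-\epsilon+i\mathbb{R}$, moving all derivatives onto $\tfrac{1}{e^{2\pi i t}-1}$; the boundary terms vanish by the absolute-convergence estimates from Proposition \ref{prop:omega-zeta-convergence} (the integrand decays exponentially at $\pm i\infty$). Each integration by parts produces $-\tfrac{d}{dt}$ acting on $(e^{2\pi it}-1)^{-1}$, but I would instead move the derivatives onto a monomial: write $1 = \tfrac{1}{(k-1)!}\bigl(\tfrac{d}{dt}\bigr)^{k-1}\tfrac{t^{k-1}}{1}$ is not quite it — rather, after integrating by parts the $(k-1)$ derivatives land producing $(-1)^{k-1}$ times derivatives on the other factor; the cleanest route is to integrate by parts so that $\bigl(\tfrac{d}{dt}\bigr)^{k-1}$ is transferred and then note $\int (\tfrac{d}{dt})^{k-1}[\,\cdot\,]\cdot\tfrac{1}{e^{2\pi it}-1}\,dt = (-1)^{k-1}\int [\,\cdot\,]\,(\tfrac{d}{dt})^{k-1}\tfrac{1}{e^{2\pi it}-1}\,dt$, and the cleanest identity to arrive at the stated form is obtained by instead expanding $\tfrac{1}{e^{-2\pi i\omega t}-1}=\sum_{n\ge1}e^{2\pi i\omega t n}$ (valid on the shifted contour after checking signs of $\mathrm{Im}\,t$) so that $(2\pi i\omega)^{-(k-1)}(\tfrac{d}{dt})^{k-1}$ brings down $n^{k-1}$, reproducing $\sum_{n\ge1}n^{k-1}e^{2\pi i\omega tn}$, exactly the Dirichlet-type series whose Mellin-type contour integral against $\tfrac{1}{e^{2\pi it}-1}$ is the claimed $\int \tfrac{(-t)^{k-1}}{(e^{2\pi it}-1)(e^{-2\pi i\omega t}-1)}\,dt$.

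The main obstacle I anticipate is \emph{not} the algebra — the Stirling resummation is already laid out in the proof of Proposition \ref{prop:phik-G} — but the analytic bookkeeping: making sure the geometric expansions of $(e^{-2\pi i\omega t}-1)^{-1}$ converge on the correct half of the contour $-\epsilon + i\mathbb{R}$ (this depends on the sign of $\mathrm{Im}\,t$, so one should split the contour or argue by the decay estimate from Proposition \ref{prop:omega-zeta-convergence} rather than term-by-term), and justifying that all boundary contributions in the integration by parts vanish. An alternative, cleaner presentation avoiding term-by-term expansion is: establish the pointwise identity $\sum_{j=1}^k(j-1)!\,{k\brace j}\,(e^{-2\pi i\omega t}-1)^{-j}=(2\pi i\omega)^{-(k-1)}\bigl(\tfrac{d}{dt}\bigr)^{k-1}(e^{-2\pi i\omega t}-1)^{-1}$ as an identity of meromorphic functions (which follows from the $\tfrac{e^y}{1-e^y}$ computation and holds without any convergence issue), substitute it into the one-variable integral, integrate by parts $k-1$ times with vanishing boundary terms, and recognize the result as $(2\pi i\omega)^{-(k-1)}\int \tfrac{(2\pi i\omega)^{k-1}(-t)^{k-1}/(k-1)!\cdot(k-1)!}{(e^{2\pi it}-1)(e^{-2\pi i\omega t}-1)}\,dt$ after the derivatives are absorbed into the monomial $(-t)^{k-1}$; tracking the sign $(-1)^{k-1}$ from integration by parts against the $(-t)^{k-1}$ gives the stated formula exactly.
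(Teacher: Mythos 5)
Your reduction to a single contour integral is fine: for $r=1$ the definition \eqref{eq:def-omega-zeta} does give
\begin{align*}
Z_{\omega}(\hbar^{-k}\phi_{k})=\int_{-\epsilon+i\,\mathbb{R}}\frac{dt}{e^{2\pi i t}-1}
\sum_{j=1}^{k}(j-1)!\,{k\brace j}\,\frac{1}{(e^{-2\pi i\omega t}-1)^{j}},
\end{align*}
and your closed form for the Stirling sum, namely $(2\pi i\omega)^{-(k-1)}\bigl(\tfrac{d}{dt}\bigr)^{k-1}(e^{-2\pi i\omega t}-1)^{-1}$, is a correct identity of meromorphic functions (it is the same generating-function computation as in Proposition \ref{prop:phik-G}). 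The gap is entirely in the last step: you still have to show
\begin{align*}
\int_{C}\frac{dt}{e^{2\pi i t}-1}\left(\frac{d}{dt}\right)^{k-1}\frac{1}{e^{-2\pi i\omega t}-1}
=(2\pi i\omega)^{k-1}\int_{C}\frac{(-t)^{k-1}\,dt}{(e^{2\pi i t}-1)(e^{-2\pi i\omega t}-1)},
\end{align*}
and none of your proposed routes establishes it. Integration by parts transfers the derivatives onto $(e^{2\pi i t}-1)^{-1}$ and produces derivatives of that factor, not the monomial $(-t)^{k-1}$; there is no monomial present to absorb anything. The expansion $(e^{-2\pi i\omega t}-1)^{-1}=\sum_{n\ge 1}e^{2\pi i\omega tn}$ converges only on the half of the contour where $\mathrm{Im}\,t>0$, so the term-by-term manipulation is not available globally; and even granting it, the assertion that the resulting ``Dirichlet-type series'' integrates against $(e^{2\pi i t}-1)^{-1}$ to give the stated integral is precisely the identity to be proved, restated rather than derived. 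Already for $k=2$ the two integrands $e^{-2\pi i\omega t}(e^{2\pi i t}-1)^{-1}(e^{-2\pi i\omega t}-1)^{-2}$ and $-t\,(e^{2\pi i t}-1)^{-1}(e^{-2\pi i\omega t}-1)^{-1}$ are genuinely different functions, and neither contour can be closed to compare them by residues.

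The idea you are missing is the duality relation of Proposition \ref{prop:omega-duality}: $Z_{\omega}(g_{j})=Z_{\omega}((e_{1}-g_{1})^{j-1}g_{1})$. Combined with Proposition \ref{prop:omega-MZV-explicit}, this replaces the $j$-th order pole $(e^{-2\pi i\omega t}-1)^{-j}$ by a \emph{simple} factor $(e^{-2\pi i\omega t}-1)^{-1}$ multiplied by the polynomial $(-1)^{j-1}\binom{t+j-1}{j-1}$ in $t$. The Stirling sum then collapses, purely algebraically via \eqref{eq:stirling-duality} and \eqref{eq:shifted-factorial}, to
\begin{align*}
\sum_{j=1}^{k}(j-1)!\,{k\brace j}\,(-1)^{j-1}\binom{t+j-1}{j-1}=(-t)^{k-1},
\end{align*}
which yields the claimed integral with no integration by parts, no series expansion on the contour, and no convergence issues beyond those already settled in Proposition \ref{prop:omega-zeta-convergence}. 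Without the duality step (or an independent proof of the displayed integral identity above), your argument does not close.
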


To show Proposition \ref{prop:G-omega},
we use the following results.

\begin{prop}\cite[Proposition 3.5]{T-omega}\label{prop:omega-MZV-explicit}
        Let $r$ be a positive integer and suppose that
        $0<\epsilon<\min{\{1, 1/r\omega\}}$.
        It holds that
        \begin{align*}
                 &
                Z_{\omega}((e_{1}-g_{1})^{\alpha_{1}}g_{\beta_{1}+1} \cdots
                (e_{1}-g_{1})^{\alpha_{r}}g_{\beta_{r}+1})        \\
                 & =\prod_{a=1}^{r}\int_{-\epsilon+i\,\mathbb{R}}
                \frac{dt_{a}}{e^{2\pi i t_{a}}-1}
                \prod_{a=1}^{r}\left\{
                (-2\pi i \omega)^{\alpha_{a}}
                \binom{t_{a}+\alpha_{a}}{\alpha_{a}}
                \left(\frac{2\pi i \omega \, e^{2\pi i \omega (t_{1}+\cdots +t_{a})}}
                {1-e^{2\pi i \omega (t_{1}+\cdots +t_{a})}}
                \right)^{\beta_{a}+1}
                \right\}
        \end{align*}
        for any non-negative integers $\alpha_{1}, \ldots , \alpha_{r}$
        and $\beta_{1}, \ldots , \beta_{r}$.
\end{prop}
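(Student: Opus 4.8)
The plan is to expand $Z_\omega$ of the word $w=(e_1-g_1)^{\alpha_1}g_{\beta_1+1}\cdots(e_1-g_1)^{\alpha_r}g_{\beta_r+1}$ directly from the definition \eqref{eq:def-omega-zeta} as an $n$-fold contour integral with $n=r+\sum_a\alpha_a$, and then to integrate out the variables attached to the $(e_1-g_1)$-letters. I would label the block-$b$ variables $s_{b,1},\dots,s_{b,\alpha_b}$ (the $\alpha_b$ copies of $e_1-g_1$) and $t_b$ (the letter $g_{\beta_b+1}$). Since $J_\omega(e_1-g_1\,|\,\cdot)=2\pi i\omega$ is constant, each $(e_1-g_1)$-letter contributes only the scalar $2\pi i\omega$ and its measure factor, while the partial sum feeding the $g$-letter of block $a$ is $P_a=\sum_{b\le a}\big(\sum_i s_{b,i}+t_b\big)$. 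The first key step is the change of variables $\tau_b=\sum_{i=1}^{\alpha_b}s_{b,i}+t_b$ (Jacobian $1$), which makes $P_a=\tau_1+\cdots+\tau_a$ and, crucially, confines each $s_{b,i}$ to the measure of its own block. By absolute convergence (Proposition \ref{prop:omega-zeta-convergence}) I may then apply Fubini and integrate the $s_{b,i}$ first, block by block.

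This reduces everything to the single-block integral
\[ K_\alpha(\tau)=\int_{(-\epsilon+i\mathbb{R})^\alpha}\frac{\prod_{i=1}^\alpha ds_i}{\big(\prod_{i=1}^\alpha(e^{2\pi is_i}-1)\big)\big(e^{2\pi i(\tau-\sum_i s_i)}-1\big)}, \]
and the heart of the proof is the identity
\[ K_\alpha(\tau)=(-1)^\alpha\,\frac{1}{e^{2\pi i\tau}-1}\binom{\tau+\alpha}{\alpha}\qquad(\alpha\ge 0), \]
from which the claimed factors $(-2\pi i\omega)^{\alpha_a}\binom{t_a+\alpha_a}{\alpha_a}$ drop out once the scalars $(2\pi i\omega)^{\sum\alpha_a}$ are reinstated. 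I would prove this by induction on $\alpha$, writing $K_\alpha(\tau)=\int_{-\epsilon+i\mathbb{R}}\tfrac{ds}{e^{2\pi is}-1}\,K_{\alpha-1}(\tau-s)$ with base case $K_0(\tau)=1/(e^{2\pi i\tau}-1)$, so the inductive step is the one-variable integral $\int\tfrac{ds}{(e^{2\pi is}-1)(e^{2\pi i(\tau-s)}-1)}\binom{\tau-s+\alpha-1}{\alpha-1}$. As a sanity check on the target, summing the asserted values against $(-x)^\alpha$ gives $\sum_{\alpha\ge0}K_\alpha(\tau)(-x)^\alpha=(1-x)^{-\tau-1}/(e^{2\pi i\tau}-1)$, which is the shape one expects.

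Evaluating that one-variable integral is the main obstacle. The integrand is periodic of period $1$ in $s$, so one cannot simply close the contour to the right and sum residues: a naive residue count produces divergent or $\tau$-independent answers, and the telescoping of residues across one period merely reconfirms periodicity. The correct bookkeeping pairs the two families of poles, $s\in\mathbb{Z}$ (from $e^{2\pi is}-1$) and $s\in\tau+\mathbb{Z}$ (from $e^{2\pi i(\tau-s)}-1$), and extracts the polynomial-in-$\tau$ factor from the mismatch in their locations; I would make this rigorous either through the Mittag--Leffler form $1/(e^{2\pi is}-1)=-\tfrac12-\tfrac{i}{2}\cot(\pi s)$ together with a regularized residue sum, or by differentiating in $\tau$ to lower the degree of the binomial weight. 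The remaining work is to match contour positions: after the substitution $\tau_b$ lies on $\mathrm{Re}=-(\alpha_b+1)\epsilon$, and each $\tau_b$-contour must be shifted to a common line $\mathrm{Re}=-\epsilon'$ with $\epsilon'<1/(r\omega)$. This is legitimate by the contour-independence in Proposition \ref{prop:omega-zeta-convergence} (equivalently, because no poles of the reduced $r$-fold integrand are crossed within the admissible strip). Assembling the $r$ blocks then yields exactly the stated $r$-fold integral.
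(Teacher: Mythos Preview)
First, note that the present paper does not give a proof of this proposition; it is quoted from \cite{T-omega} and used as a black box.  So there is no ``paper's own proof'' here to compare against.

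Your reduction via the change of variables $\tau_b=\sum_i s_{b,i}+t_b$ is sound, and it does lead to the block integral
\[
K_\alpha(\tau)=\int_{(-\epsilon+i\mathbb{R})^\alpha}
\frac{\prod_i ds_i}{\bigl(\prod_i(e^{2\pi i s_i}-1)\bigr)\,(e^{2\pi i(\tau-\sum_i s_i)}-1)}.
\]
However, the identity you call the ``heart of the proof'',
\[
K_\alpha(\tau)=(-1)^\alpha\,\frac{1}{e^{2\pi i\tau}-1}\binom{\tau+\alpha}{\alpha},
\]
is \emph{false} for every $\alpha\ge 1$.  Indeed, the integrand depends on $\tau$ only through $e^{2\pi i(\tau-\sum_i s_i)}$, so $K_\alpha(\tau+1)=K_\alpha(\tau)$; the left-hand side is $1$-periodic in $\tau$.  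The right-hand side is not: under $\tau\mapsto\tau+1$ the factor $\binom{\tau+\alpha}{\alpha}$ changes while $1/(e^{2\pi i\tau}-1)$ does not.  This is precisely why your inductive residue computation cannot be made to converge to the stated value; the ``main obstacle'' you flag is not a technical difficulty but a symptom that the target is wrong.

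Concretely, after your change of variables the $\tau_b$-contour sits at $\mathrm{Re}\,\tau_b=-(\alpha_b+1)\epsilon$, whereas the proposition places $t_a$ at $\mathrm{Re}\,t_a=-\epsilon$.  These lines lie on opposite sides of the singular set of the $s$-integral (the pole $s=\tau_b$ of the last factor crosses the $s$-contour as $\mathrm{Re}\,\tau_b$ passes $-\alpha_b\epsilon$ and then $-\epsilon$), so shifting the $\tau_b$-contour is \emph{not} free: residues are picked up, and it is exactly these residues that eventually produce the non-periodic polynomial weight $\binom{t_a+\alpha_a}{\alpha_a}$.  Your appeal to Proposition~\ref{prop:omega-zeta-convergence} for contour independence does not apply here, since that result concerns the original $Z_\omega$-integrand, not the reduced integrand with the $K_{\alpha_b}$ factors.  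As written, the argument has a genuine gap at this point; to repair it you would have to track those crossed residues explicitly (or proceed by a different route, e.g.\ integrating out one $(e_1-g_1)$ variable at a time and showing that each such integration multiplies the weight by an appropriate linear factor in $t_a$), rather than asserting a pointwise identity for $K_\alpha$.
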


\begin{prop}\cite[Theorem 4.3]{T-omega}\label{prop:omega-duality}
        For any $r\ge 1$ and $\alpha_{1}, \ldots , \alpha_{r}, \beta_{1}, \ldots , \beta_{r}\ge 0$,
        it holds that
        \begin{align*}
                Z_{\omega}((e_{1}-g_{1})^{\alpha_{1}}g_{\beta_{1}+1} \cdots
                (e_{1}-g_{1})^{\alpha_{r}}g_{\beta_{r}+1})=
                Z_{\omega}((e_{1}-g_{1})^{\beta_{r}}g_{\alpha_{r}+1} \cdots
                (e_{1}-g_{1})^{\beta_{1}}g_{\alpha_{1}+1}).
        \end{align*}
\end{prop}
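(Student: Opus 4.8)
The plan is to reduce Proposition~\ref{prop:omega-duality} to a reflection symmetry of an explicit iterated contour integral, in the spirit of the classical proof of the duality of multiple zeta values, where the substitution $t\mapsto 1-t$ reverses the integration simplex and interchanges the two forms $dt/t$ and $dt/(1-t)$. Here the analogues of those forms are the unit-scale kernel $1/(e^{2\pi i t}-1)$ attached to the letters $e_{1}-g_{1}$ and the $\omega$-scale kernel $2\pi i\omega/(e^{-2\pi i\omega s}-1)$ attached to the letters $g_{k}$; the swap $\alpha\leftrightarrow\beta$ together with the reversal of the word should correspond to interchanging these two kernels.

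First I would apply Proposition~\ref{prop:omega-MZV-explicit} to both sides and organize the whole family of identities into a single generating series. Setting
\[
F(x_1,\dots,x_r;y_1,\dots,y_r)=\sum_{\alpha_a,\beta_a\ge 0}Z_{\omega}\Big(\prod_{a=1}^{r}(e_1-g_1)^{\alpha_a}g_{\beta_a+1}\Big)\prod_{a=1}^{r}x_a^{\alpha_a}y_a^{\beta_a},
\]
the binomial factor sums by $\sum_{\alpha}\binom{t+\alpha}{\alpha}(-2\pi i\omega x)^{\alpha}=(1+2\pi i\omega x)^{-t-1}$ and the $g$-part by a geometric series, so that
\[
F=\prod_{a=1}^{r}\int_{-\epsilon+i\mathbb{R}}\frac{dt_a}{e^{2\pi i t_a}-1}\,(1+2\pi i\omega x_a)^{-t_a-1}\,\frac{2\pi i\omega}{e^{-2\pi i\omega s_a}-(1+2\pi i\omega y_a)},
\]
with $s_a=t_1+\cdots+t_a$; the interchange of summation and integration is justified, for small $x_a,y_a$, by the absolute convergence in Proposition~\ref{prop:omega-zeta-convergence}. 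In these terms the assertion becomes the single symmetry $F(x_1,\dots,x_r;y_1,\dots,y_r)=F(y_r,\dots,y_1;x_r,\dots,x_1)$, that is, invariance under reversing the blocks and exchanging $x\leftrightarrow y$.

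To prove this symmetry I would exhibit a change of variables on the contour $(-\epsilon+i\mathbb{R})^{r}$ that, on the one hand, reverses the order of the block variables, thereby turning the prefix partial sums $s_a=t_1+\cdots+t_a$ into suffix sums, and on the other hand rescales so as to send the unit-scale factor $1/(e^{2\pi i t}-1)$ to the $\omega$-scale factor and conversely. An equivalent, more hands-on route is repeated integration by parts based on the differential relation $f'=f\,(f+2\pi i\omega)$ for $f=2\pi i\omega/(e^{-2\pi i\omega s}-1)$: each derivative trades one unit of the polynomial degree carried by $(e_1-g_1)^{\alpha}$ for one unit of the power carried by $g_{\beta+1}$, which is precisely the degree-for-power exchange demanded by the duality. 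Boundary terms vanish by the decay estimates in the proof of Proposition~\ref{prop:omega-zeta-convergence}, and the case $r=1$ can be isolated as the base of an induction on $r$.

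The main obstacle is that the two exponential scales $2\pi i$ and $2\pi i\omega$ are genuinely different, so no naive reflection $s\mapsto c-s$ interchanges the two kernels the way $t\mapsto 1-t$ does for ordinary multiple zeta values. The substitution must absorb the factor $\omega$ and simultaneously reconcile the prefix partial sums appearing in $s_a$ with the suffix sums produced by the reversal, all while keeping the path $-\epsilon+i\mathbb{R}$ admissible; making this precise as a contour deformation, and controlling the attendant shifts in $\epsilon$ via Proposition~\ref{prop:omega-zeta-convergence}, is the delicate point. Once the one-variable exchange and the reversal of the partial-sum structure are established, the multivariable identity should follow by iterating the exchange block by block.
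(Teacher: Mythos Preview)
The paper does not prove Proposition~\ref{prop:omega-duality}; it merely quotes the statement from \cite[Theorem~4.3]{T-omega} and then uses it (in the proof of Proposition~\ref{prop:G-omega}) only in the depth-one special case $Z_{\omega}(g_{j})=Z_{\omega}((e_{1}-g_{1})^{j-1}g_{1})$. So there is no in-paper argument to compare your outline against.

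As for your proposal on its own terms: the generating-series repackaging is correct, and the reformulation of the duality as the symmetry $F(x_{1},\ldots,x_{r};y_{1},\ldots,y_{r})=F(y_{r},\ldots,y_{1};x_{r},\ldots,x_{1})$ is a clean way to state what has to be shown. But the proof is not actually there. You correctly identify that no reflection $s\mapsto c-s$ will interchange the two kernels because of the mismatch between the scales $2\pi i$ and $2\pi i\omega$, and then stop; neither the ``change of variables absorbing the factor $\omega$'' nor the ``integration by parts via $f'=f(f+2\pi i\omega)$'' is specified, and the latter relation by itself only produces polynomial-in-$f$ identities at a fixed $s_{a}$, not the interchange of the roles of $t_{a}$ and $s_{a}$ that the duality requires. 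In particular, for $r=1$ your plan must produce the identity
\[
\int_{-\epsilon+i\mathbb{R}}\frac{dt}{e^{2\pi i t}-1}\,(1+\xi)^{-t-1}\,\frac{2\pi i\omega}{e^{-2\pi i\omega t}-1-\eta}
=\int_{-\epsilon+i\mathbb{R}}\frac{dt}{e^{2\pi i t}-1}\,(1+\eta)^{-t-1}\,\frac{2\pi i\omega}{e^{-2\pi i\omega t}-1-\xi},
\]
and nothing in the sketch explains which substitution (it must mix a rescaling by $\omega$ with a reparametrisation sending $(1+\xi)^{-t}$ to an exponential in $-2\pi i\omega t$) achieves this while keeping the contour admissible. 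Until that one-variable step is written down explicitly, the induction on $r$ cannot start, so as it stands the proposal is an outline with the essential idea still missing.
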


\begin{proof}[Proof of Proposition \ref{prop:G-omega}]
        Proposition \ref{prop:omega-duality} implies that
        $Z_{\omega}(g_{j})=Z_{\omega}((e_{1}-g_{1})^{j-1}g_{1})$ for $j\ge 1$.
        Therefore, it follows from Proposition \ref{prop:omega-MZV-explicit} that
        \begin{align*}
                Z_{\omega}(\hbar^{-k}\phi_{k}) & =
                \sum_{j=1}^{k}(j-1)!{k \brace j}
                (2\pi i \omega)^{-j}Z_{\omega}((e_{1}-g_{1})^{j-1}g_{1}) \\
                                               & =
                \int_{-\epsilon+i\,\mathbb{R}}
                \frac{dt}{(e^{2\pi i t}-1)(e^{-2\pi i \omega t}-1)}
                \sum_{j=1}^{k}(j-1)!{k \brace j}
                (-1)^{j-1}\binom{t+j-1}{j-1}.
        \end{align*}
        By using \eqref{eq:stirling-duality} and \eqref{eq:shifted-factorial},
        we see that
        \begin{align*}
                 &
                \sum_{j=1}^{k}(j-1)!{k \brace j}
                (-1)^{j-1}\binom{t+j-1}{j-1}
                =\sum_{j=1}^{k}(-1)^{j-1}{k \brace j}
                \prod_{a=1}^{j-1}(t+a)
                \\
                 & =\sum_{l=0}^{k-1}t^{l}\sum_{j=l+1}^{k}
                (-1)^{j-1}
                {k \brace j}{j \brack l+1}=(-t)^{k-1}.
        \end{align*}
        Thus we obtain the desired formula.
\end{proof}

Motivated by Proposition \ref{prop:G-omega},
we introduce the function
\begin{align}
        G_{s}(\omega)=\int_{C} dt\,
        \frac{(-t)^{s-1}}{(e^{2\pi i t}-1)(e^{-2\pi i \omega t}-1)}
        \label{eq:def-G-omega}
\end{align}
for a parameter $s \in \mathbb{C}$,
where $(-t)^{s-1}=e^{(s-1)\mathrm{Log}{(-t)}}$
is the principal value.
The contour $C$ is defined to be a rotation about $t=0$ of
the line $-\epsilon+i\mathbb{R}$ with $0<\epsilon<\min\{1,1/\omega\}$
which separates the poles at
$\mathbb{Z}_{\ge 0} \cup (\omega^{-1}\mathbb{Z}_{\ge 0})$ from those at
$\mathbb{Z}_{\le -1} \cup (\omega^{-1}\mathbb{Z}_{\le -1})$.
The integral is independent of the rotation angle because
the integrand behaves as $e^{-c|t|}$ with some $c>0$ as $|t|\to +\infty$
in any closed angle with vertex at $t=0$ contained in
the region $\mathbb{C}\setminus(\mathbb{R} \cup \omega^{-1}\mathbb{R})$.

The function $G_{s}(\omega)$ is holomorphic on
the cut plane $\mathbb{C}'=\mathbb{C}\setminus\mathbb{R}_{\le 0}$.
It has the following series expression.

\begin{prop}\label{prop:G-series}
        If $\mathrm{Re}s>2$, it holds that
        \begin{align}
                G_{s}(\omega)=(2\pi)^{-s}\Gamma(s)\left(
                e^{-s\pi i/2}
                \sum_{m\ge 0, \, n\ge 1}(m+n\omega)^{-s}-
                e^{s\pi i/2}
                \sum_{m\ge 1, \, n\ge 0}(m+n\omega)^{-s}
                \right).
                \label{eq:G-series}
        \end{align}
        If $s=2$, we have
        \begin{align}
                G_{2}(\omega)=(2\pi i)^{-2}\zeta(2)\left(\omega^{-2}-1\right)-
                \frac{1}{4\pi i \omega}.
                \label{eq:G-at-two}
        \end{align}
\end{prop}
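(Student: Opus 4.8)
The plan is to evaluate the contour integral \eqref{eq:def-G-omega} defining $G_{s}(\omega)$ by the residue theorem, pushing the contour $C$ off to infinity in a suitable direction. First I would establish the series expression \eqref{eq:G-series} for $\mathrm{Re}\,s>2$. The integrand $(-t)^{s-1}/\{(e^{2\pi i t}-1)(e^{-2\pi i \omega t}-1)\}$ has simple poles at the nonzero integers and at the nonzero multiples of $\omega^{-1}$; the contour $C$ separates the poles in $\mathbb{Z}_{\ge 0}\cup \omega^{-1}\mathbb{Z}_{\ge 0}$ from those in $\mathbb{Z}_{\le -1}\cup \omega^{-1}\mathbb{Z}_{\le -1}$ (the point $t=0$ is not a pole of the full integrand since the double zero of the denominator is cancelled, but with the factor $(-t)^{s-1}$ for $\mathrm{Re}\,s>2$ the origin is still regular enough; in fact for $\mathrm{Re}\,s>2$ the integrand is $O(|t|^{s-3})$ near $0$). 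Closing the contour to the right picks up residues at $t=m\ge 1$ and at $t=n\omega^{-1}$, $n\ge 1$; closing to the left picks up residues at $t=-m\le -1$ and $t=-n\omega^{-1}\le -1$. The residue of $1/(e^{2\pi i t}-1)$ at $t=m\in\mathbb{Z}$ is $1/(2\pi i)$, and similarly the residue of $1/(e^{-2\pi i\omega t}-1)$ at $t=n\omega^{-1}$ is $-1/(2\pi i\omega)$. Summing the two half-plane contributions and combining like terms produces the two double sums in \eqref{eq:G-series}; the factors $e^{\mp s\pi i/2}$ arise from $(-t)^{s-1}=e^{(s-1)\mathrm{Log}(-t)}$ evaluated at positive versus negative real $t$, i.e. from $\mathrm{Log}(-t)=\log|t|\mp \pi i/2$ on the two rays. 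The Gamma factor $(2\pi)^{-s}\Gamma(s)$ comes from comparing $\sum (\pm t)^{s-1}$-type sums with the known Mellin/Lerch-type identity; more directly, one recognizes that the right-hand side is exactly the residue expansion and $(2\pi)^{-s}\Gamma(s)$ is absorbed by rescaling $t\mapsto t/(2\pi)$ inside the original integral representation of each term.

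For the more careful bookkeeping I would instead note that $\sum_{m\ge 0}1/(e^{2\pi i t}-1)$ restricted near each pole, together with the $\omega^{-1}$-lattice poles, and that the combined residue sum over the right half-plane gives, up to the scalar $(2\pi)^{-s}\Gamma(s)$, the term $e^{-s\pi i/2}\sum_{m\ge 0,\,n\ge 1}(m+n\omega)^{-s}$ — the pairing $m+n\omega$ emerging because a pole of $1/(e^{-2\pi i\omega t}-1)$ at $t=n\omega^{-1}$ is "shifted" by the geometric expansion of $1/(e^{2\pi i t}-1)$ in powers of $e^{2\pi i t}$, so that the double sum indexes $t$-values of the form $(m+n\omega)/(\cdots)$. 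Making this last identification precise is the step I expect to be the main obstacle: one must carefully expand one of the two factors as a geometric series, justify interchanging the sum with the contour integral (valid for $\mathrm{Re}\,s$ large and $0<\epsilon<\min\{1,1/\omega\}$ by absolute convergence, exactly as in the proof of Proposition \ref{prop:omega-zeta-convergence}), and then evaluate the resulting one-dimensional integrals $\int (-t)^{s-1}e^{-2\pi i(m+n\omega)t}\,dt$ along the rotated contour, which produce $\Gamma(s)(2\pi)^{-s}(m+n\omega)^{-s}$ times $e^{\mp s\pi i/2}$ according to the half-plane. Careful attention to which sums start at $0$ versus $1$ (dictated by which poles lie on which side of $C$) yields the asymmetry $\sum_{m\ge 0,n\ge 1}$ versus $\sum_{m\ge 1,n\ge 0}$ in \eqref{eq:G-series}.

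For the value at $s=2$ in \eqref{eq:G-at-two}, the series \eqref{eq:G-series} no longer converges absolutely, so I would argue by analytic continuation: $G_{s}(\omega)$ is holomorphic on $\mathbb{C}'$ by the remark following \eqref{eq:def-G-omega} (the integrand decays exponentially), and the right-hand side of \eqref{eq:G-series} admits meromorphic continuation in $s$ via the Hurwitz/Barnes zeta function $\sum_{n\ge 1}(n\omega)^{-s}\zeta$-type expansions. Writing $\sum_{m\ge 0,n\ge 1}(m+n\omega)^{-s}=\sum_{n\ge 1}\zeta(s,n\omega)$ and similarly for the other sum, one uses the Laurent expansion of the Hurwitz zeta function at $s=1$ (namely $\zeta(s,a)=1/(s-1)-\psi(a)+O(s-1)$) to see that the apparent poles at $s=2$ actually cancel between the two double sums after multiplying by $e^{\mp s\pi i/2}$; alternatively, and more in the spirit of this paper, one can simply take the limit $s\to 2$ directly in the contour integral, where at $s=2$ the integrand becomes $(-t)/\{(e^{2\pi it}-1)(e^{-2\pi i\omega t}-1)\}$, and evaluate it by residues as in Proposition \ref{prop:G-omega} combined with the classical Eisenstein-type evaluation. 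The term $-1/(4\pi i\omega)$ is the contribution of the pole at $t=0$, which at $s=2$ is a genuine simple pole of the integrand (since $(-t)^{1}$ only partially cancels the double zero), with residue computed from the Laurent expansions $e^{2\pi it}-1=2\pi it(1+\pi it+\cdots)$ and $e^{-2\pi i\omega t}-1=-2\pi i\omega t(1-\pi i\omega t+\cdots)$; this gives residue $1/(2\pi i)\cdot 1/(2\pi i\omega)\cdot(-1)\cdot$(linear coefficient), which one computes to be $-1/(4\pi i\omega)$. The remaining $(2\pi i)^{-2}\zeta(2)(\omega^{-2}-1)$ is then the symmetric lattice-sum part, matching the $k=2$ instance of the Eisenstein relation quoted in the introduction. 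I expect the delicate point here to be justifying the interchange of the limit $s\to 2$ with the regularized sums, which is cleanest done on the contour-integral side rather than the series side.
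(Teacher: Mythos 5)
Your primary strategy --- closing the contour and summing residues at $t=m$ and $t=n\omega^{-1}$ --- is not available here, and this is a genuine gap rather than a bookkeeping issue. The factor $(-t)^{s-1}=e^{(s-1)\mathrm{Log}(-t)}$ is the principal branch, so the integrand has a branch cut precisely along $t\in\mathbb{R}_{\ge 0}$, where the poles you want to encircle are sitting; moreover the integrand decays like $e^{-c|t|}$ only in closed sectors bounded away from the real axis (as the paper notes right after \eqref{eq:def-G-omega}), so neither half-plane closure converges. The paper's actual argument for $\mathrm{Re}\,s>2$ is different: it deforms $C$ onto the two halves of the \emph{imaginary} axis plus a small left semicircle at the origin. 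On $t=ix$, $x>0$, both denominators become real exponentials and one writes
\begin{align*}
\frac{e^{-2\pi\omega x}}{(1-e^{-2\pi x})(1-e^{-2\pi\omega x})}=\sum_{m\ge 0,\,n\ge 1}e^{-2\pi(m+n\omega)x},
\end{align*}
so each term is a Gamma integral yielding $(2\pi)^{-s}\Gamma(s)(m+n\omega)^{-s}$, with the phase $e^{-s\pi i/2}$ coming from $(-ix)^{s-1}\,d(ix)$; the negative imaginary axis gives the $e^{s\pi i/2}$ sum with the index ranges swapped, and the semicircle vanishes since the integrand is $O(|t|^{\mathrm{Re}\,s-3})$. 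Your second paragraph gestures at this (geometric expansion, termwise Gamma integrals, the correct origin of the phases), but the mechanism you describe --- poles of $1/(e^{-2\pi i\omega t}-1)$ being ``shifted'' by a geometric expansion of the other factor --- is not what produces the double sum, and the integrals $\int(-t)^{s-1}e^{-2\pi i(m+n\omega)t}\,dt$ you write down diverge on the rotated contour (the exponent has the wrong sign on the positive imaginary ray). As written, the first half of the proof does not go through without replacing the residue framework by the contour-rotation-plus-expansion argument.

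For $s=2$ the situation is similar: your half-residue computation at $t=0$ correctly accounts for the $-1/(4\pi i\omega)$ term (the left semicircle, traversed so as to keep $t=0$ on the right, contributes $-\pi i$ times the residue $(2\pi i)^{-2}\omega^{-1}$), but the two routes you offer for the main term both have problems. The Hurwitz-zeta continuation is misdiagnosed --- the obstruction at $s=2$ is the divergence of the outer sum $\sum_{n\ge 1}\zeta(s,n\omega)$ for $\mathrm{Re}\,s\le 2$, not the pole of $\zeta(s,a)$ at $s=1$, so the claimed cancellation of poles is not the relevant phenomenon --- and the appeal to ``the classical Eisenstein-type evaluation'' imports a fact about the $q$-series $G_{2k}(q)$ that has not been established for $G_s(\omega)$. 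What is actually needed, and what the paper does, is to combine the two half-axis integrals \emph{before} letting $\epsilon\to 0$, which produces the convergent integral
\begin{align*}
\int_{0}^{\infty}x\left(\frac{1}{e^{2\pi x}-1}-\frac{1}{e^{2\pi\omega x}-1}\right)dx=(1-\omega^{-2})(2\pi)^{-2}\zeta(2),
\end{align*}
giving the term $(2\pi i)^{-2}\zeta(2)(\omega^{-2}-1)$ directly. Without this cancellation step the individual half-axis integrals diverge at the origin for $s=2$, so the limit cannot be taken ray by ray.
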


\begin{proof}
        We may assume that $\omega>0$.

        First we consider the case where $\mathrm{Re}s>2$.
        Set $C_{\epsilon, \pm}=\{ix \mid \pm x \ge \epsilon\}$
        and $C_{\epsilon, 0}=\{\epsilon e^{i\theta} \mid \pi/2\le \theta\le 3\pi/2\}$.
        We deform the contour $C=-\epsilon+i\,\mathbb{R}$
        to $(C_{\epsilon, -})^{-1}\cup (C_{\epsilon, 0})^{-1} \cup C_{\epsilon, +}$.
        We denote the integrand of the right-hand side of \eqref{eq:def-G-omega}
        by $f(t)$.
        In the limit as $\epsilon \to +0$, it holds that
        \begin{align*}
                \int_{C_{\epsilon, +}}f(t)\,dt
                 & =
                \int_{\epsilon}^{\infty} idx\,(-ix)^{s-1}
                \frac{-e^{-2\pi \omega x}}{(1-e^{-2\pi x})(1-e^{-2\pi \omega x})} \\
                 & \to
                e^{-s\pi i/2}
                \int_{0}^{\infty} dx\,x^{s-1}
                \frac{e^{-2\pi \omega x}}{(1-e^{-2\pi x})(1-e^{-2\pi \omega x})}.
        \end{align*}
        since $\mathrm{Re}s>2$.
        It is equal to
        \begin{align*}
                e^{-s\pi i/2}\sum_{m\ge 0, \, n\ge 1}
                \int_{0}^{\infty} dx\,x^{s-1}e^{-2\pi (m+n\omega)x}=
                (2\pi)^{-s}\Gamma(s)e^{-s\pi i/2}\sum_{m\ge 0, \, n\ge 1} (m+n\omega)^{-s}.
        \end{align*}
        Similarly we find that
        \begin{align*}
                \int_{C_{\epsilon, -}}f(t)\,dt \to
                -(2\pi)^{-s}\Gamma(s)e^{s\pi i/2}\sum_{m\ge 1, \, n\ge 0} (m+n\omega)^{-s}
        \end{align*}
        as $\epsilon \to +0$ by setting $t=-ix$.
        Since $|f(t)|=O(|t|^{\mathrm{Re}s-3})$ as $t\to 0$,
        the integral $\int_{C_{\epsilon, 0}}f(t)\,dt$ vanishes
        in the limit as $\epsilon \to +0$ if $\mathrm{Re}s>2$.
        Thus we get \eqref{eq:G-series}.

        Next we consider the case of $s=2$.
        Then
        \begin{align*}
                \left(\int_{C_{\epsilon, +}}+\int_{(C_{\epsilon, -})^{-1}}\right)f(t)\,dt=
                \left(\int_{\epsilon}^{\infty}+\int_{-\infty}^{-\epsilon}\right)dx\,
                \frac{x}{(e^{-2\pi x}-1)(e^{2\pi \omega x}-1)}.
        \end{align*}
        By changing the variable $x$ to $-x$ in the second term, we get
        \begin{align*}
                 &
                \int_{\epsilon}^{\infty}dx\, \left(
                \frac{x}{(e^{-2\pi x}-1)(e^{2\pi \omega x}-1)}-\frac{x}{(e^{2\pi x}-1)(e^{-2\pi \omega x}-1)}
                \right)                                   \\
                 & =\int_{\epsilon}^{\infty}dx\, x \left(
                \frac{1}{e^{2\pi x}-1}-\frac{1}{e^{2\pi \omega x}-1}
                \right)                                   \\
                 & \to
                \int_{0}^{\infty}\frac{x}{e^{2\pi x}-1}dx-
                \int_{0}^{\infty}\frac{x}{e^{2\pi \omega x}-1}dx
                =\left(1-\omega^{-2}\right) (2\pi)^{-2}\zeta(2)
        \end{align*}
        in the limit as $\epsilon \to +0$.
        Since $f(t)=-(2\pi i)^{-2}\omega^{-1}t^{-1}+O(1)$ as $t \to 0$,
        it holds that
        \begin{align*}
                \int_{(C_{\epsilon, 0})^{-1}}f(t)\, dt \to -\pi i
                \left(-\frac{1}{(2\pi i)^{2}\omega}\right)=
                \frac{1}{4\pi i \omega}
                \qquad (\epsilon \to +0).
        \end{align*}
        Thus we obtain \eqref{eq:G-at-two}.
\end{proof}


\subsection{Three-term relation}

The function $G_{s}(\omega)$ satisfies the following three-term relation.

\begin{prop}\label{prop:G-omega-periodlike}
        On the cut plane $\mathbb{C}'=\mathbb{C}\setminus\mathbb{R}_{\le 0}$
        it holds that
        \begin{align}
                G_{s}(\omega)=G_{s}(\omega +1)+(\omega +1)^{-s}
                G_{s}(\frac{\omega}{\omega +1}).
                \label{eq:G-three-term}
        \end{align}
\end{prop}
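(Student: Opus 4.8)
The plan is to work directly from the integral representation \eqref{eq:def-G-omega} and split the contour into the pieces contributed by the two families of poles. Recall that $G_{s}(\omega)$ has poles (of the integrand) at $\mathbb{Z}_{\ge 0}\cup(\omega^{-1}\mathbb{Z}_{\ge 0})$ on one side of $C$ and at $\mathbb{Z}_{\le -1}\cup(\omega^{-1}\mathbb{Z}_{\le -1})$ on the other. The key observation is that the factor $e^{-2\pi i\omega t}-1$ in the denominator has its zeros at $t\in\omega^{-1}\mathbb{Z}$, and a scaling substitution $t\mapsto t/(\omega+1)$ (or equivalently viewing the set $\omega^{-1}\mathbb{Z}$ together with $\mathbb{Z}$) is what produces the shift $\omega\mapsto\omega+1$ and the Möbius partner $\omega\mapsto\omega/(\omega+1)$. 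So first I would make the change of variables $t=(\omega+1)u$ in the integral for $G_{s}(\omega+1)$, noting $(-t)^{s-1}=(\omega+1)^{s-1}(-u)^{s-1}$ on the appropriate cut, to rewrite
\begin{align*}
        (\omega+1)^{-s}G_{s}\!\left(\tfrac{\omega}{\omega+1}\right)+G_{s}(\omega+1)
\end{align*}
as a single contour integral whose integrand differs from that of $G_{s}(\omega)$ only in the denominator, where a partial-fraction-type identity relating $(e^{2\pi iu}-1)^{-1}$, $(e^{2\pi i(\omega+1)u}-1)^{-1}$ and $(e^{-2\pi i\omega u}-1)^{-1}$ should appear.

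The second step is to establish that algebraic denominator identity. One expects something of the shape
\begin{align*}
        \frac{1}{(e^{2\pi it}-1)(e^{-2\pi i\omega t}-1)}
        =\frac{1}{(e^{2\pi i t}-1)(e^{-2\pi i(\omega+1)t}-1)}
        +\frac{1}{(e^{2\pi i(\omega+1)t}-1)(e^{-2\pi i\omega t}-1)}
\end{align*}
(up to the correct signs and arguments), which is a purely trigonometric/exponential identity provable by clearing denominators and using $e^{2\pi i(\omega+1)t}=e^{2\pi i\omega t}e^{2\pi it}$; this is the combinatorial heart of the three-term relation and is exactly the analogue of the elementary identity underlying the period polynomial relations in \cite{LZ}. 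Once this is in hand, the three integrals combine termwise, so that each of $G_{s}(\omega+1)$ and $(\omega+1)^{-s}G_{s}(\omega/(\omega+1))$ supplies one of the two summands, and their sum is the integrand of $G_{s}(\omega)$.

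The third step is the contour bookkeeping: after the substitution, the contour for the $G_{s}(\omega/(\omega+1))$ integral is $C$ rescaled by $(\omega+1)^{-1}$, and I must check that all three contours can be deformed to a common contour $C$ without crossing poles, and that the branch of $(-t)^{s-1}$ is consistent across the substitution (here the hypothesis $\omega>0$, hence $\omega+1>0$ and $\omega/(\omega+1)\in(0,1)$, makes the scaling factor positive so the principal branch is preserved, and the final identity on all of $\mathbb{C}'$ follows by analytic continuation in $s$ since both sides are holomorphic there). I would prove the identity first for $\mathrm{Re}\,s>2$ using the series expression \eqref{eq:G-series} as an independent sanity check — there the relation becomes the statement that $\sum(m+n\omega)^{-s}$ over appropriate lattice cones splits according to whether one decomposes $(m,n)$ via $m=(m-n)+n$, i.e. the standard Hecke-type decomposition — and then invoke holomorphy to extend to $\mathbb{C}'$.

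The main obstacle I anticipate is the contour/branch-cut analysis in the substitution step: one must be careful that rescaling $t\mapsto(\omega+1)t$ moves the poles at $\omega^{-1}\mathbb{Z}$ and $\mathbb{Z}$ in a way that still admits a separating contour, and that no pole is swept across during the deformation back to $C$; the factor $(-t)^{s-1}$ with its cut along $\mathbb{R}_{\le 0}$ must not be crossed either. I expect this to be routine given $\omega>0$ but it is where the proof could go wrong if done carelessly. The denominator identity itself and the termwise matching should be short once set up correctly.
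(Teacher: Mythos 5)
Your proposal is correct and follows essentially the same route as the paper: the partial-fraction identity you guessed is in fact exact as written (it is the paper's one-line identity $\frac{1}{e^{-2\pi i\omega t}-1}-\frac{1}{e^{-2\pi i(\omega+1)t}-1}=\frac{e^{2\pi it}-1}{(e^{-2\pi i\omega t}-1)(e^{2\pi i(\omega+1)t}-1)}$ divided through by $e^{2\pi it}-1$), and the rescaling $t\mapsto t/(\omega+1)$ with $\omega>0$ converts the second summand into $(\omega+1)^{-s}G_{s}(\omega/(\omega+1))$ exactly as you describe, with the remaining contour and branch checks being routine. The only slip is that the substitution is applied to the integral for $G_{s}(\omega/(\omega+1))$, not to that for $G_{s}(\omega+1)$, and the series-expansion sanity check is unnecessary since both sides are holomorphic on $\mathbb{C}'$ once the identity is known for $\omega>0$.
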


\begin{proof}
        It suffices to show the equality in the case where $\omega>0$.
        Since
        \begin{align*}
                \frac{1}{e^{-2\pi i \omega t}-1}-
                \frac{1}{e^{-2\pi i (\omega+1) t}-1}=
                \frac{e^{2\pi i t}-1}
                {(e^{-2\pi i \omega t}-1)(e^{2\pi i (\omega+1) t}-1)},
        \end{align*}
        we see that
        \begin{align*}
                G_{s}(\omega)-G_{s}(\omega+1)=\int_{-\epsilon+i\,\mathbb{R}}\,dt\,
                \frac{(-t)^{s-1}}{(e^{2\pi i (\omega+1)t}-1)(e^{-2\pi i \omega t}-1)}=
                (\omega+1)^{-s}G_{s}(\frac{\omega}{\omega+1})
        \end{align*}
        by changing the variable $t \to t/(\omega+1)$.
\end{proof}

A function satisfying the three-term relation \eqref{eq:G-three-term}
is called a \textit{periodlike function}.
{}From Proposition \ref{prop:G-omega-periodlike},
$G_{s}(\omega)$ is a periodlike function.
In \cite{LZ} the following examples of periodlike function
are given.

\begin{example}\label{ex:LZ}
        \begin{enumerate}
                \item The function $\psi^{-}(\omega)=1-\omega^{-s}$ is a periodlike function
                      for any $s \in \mathbb{C}$ which is holomorphic in $\mathbb{C}'$.
                \item For $s \in \mathbb{C}$ satisfying $\mathrm{Re}s>2$,
                      we define the holomorphic function $\psi^{+}(\omega)$ in $\mathbb{C}'$ by
                      \begin{align*}
                              \psi^{+}(\omega)=\left(\sum_{m\ge 1, \, n\ge 0}+\sum_{m\ge 0, \, n\ge 1}\right)
                              (m+n\omega)^{-s}.
                      \end{align*}
                      Then $\psi^{+}(\omega)$ is a periodlike function.
        \end{enumerate}
\end{example}

On the other hand, from Proposition \ref{prop:G-series},
we obtain the following expression of $G_{s}(\omega)$
at the integer points $s=k\ge 3$.
Hence the function $G_{s}(\omega)$ interpolates the above two examples.

\begin{cor}\label{cor:G-integer}
        For any integer $k\ge 2$, we have
        \begin{align*}
                G_{k}(\omega)=\left\{
                \begin{array}{ll}
                        \displaystyle
                        \frac{(k-1)!}{(2\pi i )^{k}}\,\zeta(k)
                        \left(\omega^{-k}-1\right)-\frac{\delta_{k,2}}{4\pi i \omega}
                         & (\hbox{$k$:even}), \\
                        \displaystyle
                        \frac{(k-1)!}{(2\pi i)^{k}}
                        \left(
                        \sum_{m\ge 0, \, n\ge 1}+\sum_{m\ge 1, \, n\ge 0}
                        \right)
                        (m+n\omega)^{-k}
                         & (\hbox{$k$:odd}).
                \end{array}
                \right.
        \end{align*}
\end{cor}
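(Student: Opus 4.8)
The plan is to specialize Proposition \ref{prop:G-series} to $s = k$. For an integer $k \ge 3$ one has $\mathrm{Re}\,s = k > 2$, so the series expression \eqref{eq:G-series} holds verbatim, and I would start from there; the case $k = 2$ is handled separately at the end by quoting \eqref{eq:G-at-two}.

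First I would simplify the phase factors. Since $e^{-k\pi i/2} = i^{-k}$ and $(2\pi i)^{-k} = (2\pi)^{-k} i^{-k}$, we get $(2\pi)^{-k} e^{-k\pi i/2} = (2\pi i)^{-k}$; and since $e^{k\pi i/2} = i^{k} = (-1)^{k} i^{-k}$, we get $(2\pi)^{-k} e^{k\pi i/2} = (-1)^{k}(2\pi i)^{-k}$. Together with $\Gamma(k) = (k-1)!$, substituting into \eqref{eq:G-series} gives
\begin{align*}
G_{k}(\omega) = \frac{(k-1)!}{(2\pi i)^{k}}\left(\sum_{m\ge 0,\, n\ge 1}(m+n\omega)^{-k} - (-1)^{k}\sum_{m\ge 1,\, n\ge 0}(m+n\omega)^{-k}\right).
\end{align*}
When $k$ is odd, $-(-1)^{k} = +1$, so the two sums combine into $\bigl(\sum_{m\ge 0,\, n\ge 1} + \sum_{m\ge 1,\, n\ge 0}\bigr)(m+n\omega)^{-k}$, which is the asserted odd-case formula. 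When $k$ is even with $k \ge 4$, the difference of the two sums kills the common part $\sum_{m,\, n\ge 1}(m+n\omega)^{-k}$, leaving only the boundary terms: the $m=0$ term of the first sum contributes $\sum_{n\ge 1}(n\omega)^{-k} = \omega^{-k}\zeta(k)$ and the $n=0$ term of the second contributes $\sum_{m\ge 1} m^{-k} = \zeta(k)$, so $G_{k}(\omega) = \frac{(k-1)!}{(2\pi i)^{k}}(\omega^{-k}-1)\zeta(k)$, matching the even-case formula since $\delta_{k,2} = 0$. Finally, for $k = 2$, equation \eqref{eq:G-at-two} reads $G_{2}(\omega) = (2\pi i)^{-2}\zeta(2)(\omega^{-2}-1) - \tfrac{1}{4\pi i\omega}$, which is exactly the claimed value (using $1! = 1$ and $\delta_{2,2} = 1$).

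There is essentially no obstacle here. The only subtle point is that splitting off the $m=0$ and $n=0$ terms in the even case uses absolute convergence of $\sum (m+n\omega)^{-k}$, which holds for $k \ge 3$ but fails at $k=2$; this is precisely why \eqref{eq:G-at-two} was established separately in Proposition \ref{prop:G-series}, and here we merely invoke it.
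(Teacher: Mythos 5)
Your proposal is correct and follows exactly the route the paper intends: the corollary is presented as a direct consequence of Proposition \ref{prop:G-series}, obtained by setting $s=k$, simplifying $(2\pi)^{-k}e^{\mp k\pi i/2}$ to $(2\pi i)^{-k}$ and $(-1)^k(2\pi i)^{-k}$, combining or cancelling the two lattice sums according to the parity of $k$, and quoting \eqref{eq:G-at-two} for $k=2$. Your added remark on absolute convergence justifying the splitting for $k\ge 3$ is a correct and welcome precision.
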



\subsection{A generating series of $\omega$MZVs}

By applying the map $Z_{\omega}$ to both sides of
\eqref{eq:main-eN} or \eqref{eq:main-eN-explicit},
we obtain a formula for the generating series
of the $\omega$MZVs $Z_{\omega}((e_{N}^{\circ_{\hbar}L})^{r}) \, (r\ge 1)$
with fixed $N\ge 2$ and $L\ge 1$.
For simplicity, we write it only in the case of $N=2$.

It holds that
\begin{align}
        \log{\frac{\sin{\pi i x}}{\pi i x}}=
        \sum_{k\ge 1}\frac{(-1)^{k-1}}{k}\zeta(2k)x^{2k}=
        \sum_{n\ge 1}\frac{B_{2n}}{(2n)!}\frac{(2\pi x)^{2n}}{2n},
        \label{eq:bernoulli-even}
\end{align}
where $B_{n}$ is the Bernoulli number.
As an $\omega$-deformation of the Bernoulli number,
we define the set of polynomials $\{B_{2m}(\omega)\}_{m\ge 1}$
by the expansion
\begin{align}
        \log{\left(\frac{\sin{(\omega^{-1}\arcsin{(\pi i \omega x)})}}{\pi i x}\right)}=
        \sum_{n=1}^{\infty}\frac{B_{2n}(\omega)}{(2n)!}\frac{(2\pi x)^{2n}}{2n}.
        \label{eq:bernoulli-omega}
\end{align}
For example, we have
\begin{align*}
        B_{2}(\omega)
         & =-\frac{\omega^{2}-1}{6},                                                                             \\
        B_{4}(\omega)
         & =\frac{11\omega^{4}-10\omega^2-1}{30}=\frac{(\omega^2-1)(11\omega^2+1)}{30},                          \\
        B_{6}(\omega)
         & =-\frac{191\omega^6-168\omega^4-21\omega^2-2}{84}=-\frac{(\omega^2-1)(191\omega^4+23\omega^2+2)}{84}, \\
        B_{8}(\omega)
         & =\frac{2497\omega^8-2160\omega^6-294\omega^4-40\omega^2-3}{90}=
        \frac{(\omega^2-1)(11\omega^2+1)(227\omega^4+10\omega^2+3)}{90}.
\end{align*}
It follows from the definition that
$B_{2n}(\omega) \to B_{2n}$ in the limit as $\omega \to 0$.

\begin{thm}
        For $L\ge 1$, it holds that
        \begin{align*}
                1+\sum_{r\ge 1}(-1)^{(L-1)r}
                Z_{\omega}((e_{2}^{\circ_{\hbar} L})^{r})X^{r}=
                \exp{\left(L\sum_{n\ge 1}X^{n}
                        \left(-\frac{1}{\pi i \omega}\frac{(2\pi i \omega)^{2Ln}}{(Ln)^{2}\binom{2Ln}{Ln}}+
                        \frac{B_{2Ln}(\omega)}{(2Ln)!}\frac{(2\pi )^{2Ln}}{2Ln}
                        \right)
                        \right)}.
        \end{align*}
\end{thm}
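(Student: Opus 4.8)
The plan is to apply the $\ast_{\hbar}$-homomorphism $Z_{\omega}$ to the identity \eqref{eq:main-e2} with $N=2$ and then rewrite the resulting exponent. Since $Z_{\omega}$ intertwines $\exp_{\ast_{\hbar}}$ with the ordinary exponential, sends $\hbar^{\pm 1}$ to $(2\pi i\omega)^{\pm 1}$, and satisfies $Z_{\omega}(\hbar^{-2k}\phi_{2k})=G_{2k}(\omega)$ by Proposition \ref{prop:G-omega}, applying $Z_{\omega}$ to \eqref{eq:main-e2} yields
\begin{align*}
        \sum_{r\ge 0}(-1)^{(L+1)r}(2\pi i\omega)^{-2Lr}Z_{\omega}((e_{2}^{\circ_{\hbar}L})^{r})X^{2Lr}
        =\exp\left(2\sum_{k\ge 1}\frac{(-1)^{k-1}}{(2k)!}G_{2k}(\omega)
        \sum_{m=1}^{L}\left(2\arcsin\frac{e^{m\pi i/L}X}{2}\right)^{2k}\right),
\end{align*}
both sides being power series in $X^{2L}$. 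Substituting $X^{2L}\mapsto(2\pi i\omega)^{2L}X$ and using $(-1)^{(L+1)r}=(-1)^{(L-1)r}$ turns the left-hand side into $1+\sum_{r\ge 1}(-1)^{(L-1)r}Z_{\omega}((e_{2}^{\circ_{\hbar}L})^{r})X^{r}$, the left-hand side of the asserted identity. It then remains to check that, after the same substitution, the exponent on the right equals $L\sum_{n\ge 1}X^{n}\bigl(-\tfrac{1}{\pi i\omega}\tfrac{(2\pi i\omega)^{2Ln}}{(Ln)^{2}\binom{2Ln}{Ln}}+\tfrac{B_{2Ln}(\omega)}{(2Ln)!}\tfrac{(2\pi)^{2Ln}}{2Ln}\bigr)$.

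To do this I would evaluate $G_{2k}(\omega)$ by Corollary \ref{cor:G-integer}, writing $G_{2k}(\omega)=\frac{(2k-1)!}{(2\pi i)^{2k}}\zeta(2k)(\omega^{-2k}-1)-\frac{\delta_{k,1}}{4\pi i\omega}$, and split the exponent into a ``zeta part'' and a ``delta part'' accordingly. For the zeta part, using $\frac{(2k-1)!}{(2k)!}=\tfrac1{2k}$ and $\frac{(-1)^{k-1}}{(2\pi i)^{2k}}=-(2\pi)^{-2k}$, it becomes $-\sum_{m=1}^{L}\sum_{k\ge 1}\frac{\zeta(2k)(\omega^{-2k}-1)}{k}\bigl(\pi^{-1}\arcsin(e^{m\pi i/L}X/2)\bigr)^{2k}$. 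The key step is the classical generating identity $\sum_{k\ge 1}\frac{\zeta(2k)}{k}y^{2k}=-\log\frac{\sin\pi y}{\pi y}$, which is a reformulation of \eqref{eq:bernoulli-even}; applying it separately to the $\omega^{-2k}$ and the $1$ pieces, the two logarithms combine (the $\arcsin$ appearing in the denominators cancels) into $\sum_{m=1}^{L}\log\frac{2\omega\sin(\omega^{-1}\arcsin(e^{m\pi i/L}X/2))}{e^{m\pi i/L}X}$. By the very definition \eqref{eq:bernoulli-omega} of $B_{2n}(\omega)$, evaluated at $x=e^{m\pi i/L}X/(2\pi i\omega)$, each summand equals $\sum_{n\ge 1}\frac{B_{2n}(\omega)}{(2n)!}\frac{(2\pi)^{2n}}{2n}(-1)^{n}\omega^{-2n}(e^{m\pi i/L}X)^{2n}$; summing over $m$ kills all $n$ not divisible by $L$, and after the substitution $X^{2L}\mapsto(2\pi i\omega)^{2L}X$ the powers of $-1$, $i$ and $\omega$ cancel, leaving exactly $L\sum_{n\ge 1}X^{n}\frac{B_{2Ln}(\omega)}{(2Ln)!}\frac{(2\pi)^{2Ln}}{2Ln}$.

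For the delta part only $k=1$ survives, so it equals $-\frac{1}{4\pi i\omega}\sum_{m=1}^{L}\bigl(2\arcsin(e^{m\pi i/L}X/2)\bigr)^{2}$; expanding by the $k=1$ case of \eqref{eq:arcsin-expansion}, summing over $m$ (again only multiples of $L$ remain), and performing the substitution $X^{2L}\mapsto(2\pi i\omega)^{2L}X$ produces the remaining binomial term $L\sum_{n\ge 1}X^{n}\bigl({-}\tfrac{1}{\pi i\omega}\bigr)\tfrac{(2\pi i\omega)^{2Ln}}{(Ln)^{2}\binom{2Ln}{Ln}}$. Adding the two parts gives the claimed formula. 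An equivalent route is to feed \eqref{eq:Bachmann3} (the $L=1$ case, after applying $Z_{\omega}$, where the right-hand side is $\tfrac{\sin(\omega^{-1}\arcsin(\pi i\omega X))}{\pi i X}$ times an $\arcsin^{2}$-exponential) into Corollary \ref{cor:cyclotomic-prod}; summing the resulting exponents over rotated arguments amounts to the same roots-of-unity bookkeeping.

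The step I expect to be the main obstacle is the zeta part: recognizing that the alternating sum of $\zeta(2k)$'s collapses, through $\log(\sin\pi y/\pi y)$ and the nested $\arcsin$ and $\sin$, into precisely the generating function defining the $\omega$-Bernoulli polynomials $B_{2n}(\omega)$. Keeping careful track of the two rescalings---first $\hbar\to 2\pi i\omega$ on passing to $Z_{\omega}$, then $X^{2L}\to(2\pi i\omega)^{2L}X$ to normalize the generating variable---together with the sum over the $L$-th roots of unity is the only delicate bookkeeping; the rest is routine power-series manipulation.
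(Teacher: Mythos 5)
Your proposal is correct and follows essentially the same route as the paper's own proof: apply the harmonic homomorphism $Z_{\omega}$ to \eqref{eq:main-e2}, evaluate $G_{2k}(\omega)$ via Corollary \ref{cor:G-integer}, resum the $\zeta(2k)$-part through \eqref{eq:bernoulli-even} into $\log\bigl(\sin(\omega^{-1}\arcsin(\cdot))/(\cdot)\bigr)$ and expand it by the defining series \eqref{eq:bernoulli-omega}, treat the $k=1$ delta term with \eqref{eq:arcsin-expansion}, and finish with the roots-of-unity filter and the rescaling of $X$. The only blemish is cosmetic: your intermediate display for the zeta part carries a spurious factor $(2\pi)^{2n}$ (it should appear only after the substitution $X^{2L}\mapsto(2\pi i\omega)^{2L}X$), but the final expression you assert is the correct one.
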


\begin{proof}
        We apply the map $Z_{\omega}$ to both sides of \eqref{eq:main-e2},
        and change $X \to 2\pi i \omega X$.
        Then we obtain
        \begin{align}
                1+\sum_{r\ge 1}(-1)^{(L-1)r}Z_{\omega}((e_{2}^{\circ_{\hbar}L})^{r})X^{2Lr}=
                \exp{\left(\sum_{m=1}^{L}H(e^{m\pi i/L}X)\right)},
                \label{eq:e2-omega-generating}
        \end{align}
        where
        \begin{align*}
                H(X)=2\sum_{k\ge 1}
                \frac{(-1)^{k-1}}{(2k)!}Z_{\omega}(\hbar^{-2k}\phi_{2k})
                \sum_{m=1}^{L}\left(2\arcsin{(\pi i \omega X)}\right)^{2k}.
        \end{align*}
        It follows from Corollary \ref{cor:G-integer} and \eqref{eq:bernoulli-even}
        that
        \begin{align}
                H(X)
                 & =\sum_{k\ge 1}\frac{(-1)^{k-1}}{k}
                \left(\zeta(2k)(2\pi i)^{-2k}(\omega^{-2k}-1)-\frac{\delta_{k1}}{4\pi i \omega}\right)
                \left(2\arcsin{(\pi i \omega X)}\right)^{2k}
                \label{eq:formula-HX}
                \\
                 & =-\frac{\arcsin^{2}(\pi i \omega X)}{\pi i \omega}+
                \log{\frac{\sin{(\omega^{-1}\arcsin{(\pi i \omega X)})}}{\pi i X}}.
                \nonumber
        \end{align}
        By using \eqref{eq:arcsin-expansion} with $k=1$ and \eqref{eq:bernoulli-omega},
        we obtain
        \begin{align*}
                H(X)=-\frac{1}{\pi i \omega}
                \sum_{n\ge 1}\frac{(2\pi i \omega X)^{2n}}{n^2 \binom{2n}{n}}+
                \sum_{n\ge 1}\frac{B_{2n}(\omega)}{(2n)!}\frac{(2\pi X)^{2n}}{2n}.
        \end{align*}
        Now we get the desired equality by using
        \begin{align*}
                \sum_{m=1}^{L}(e^{m\pi i/L}X)^{2n}=\left\{
                \begin{array}{ll}
                        L\,X^{2n} & (L\mid n) \\ 0 & (\hbox{otherwise})
                \end{array}
                \right.
        \end{align*}
        and changing $X^{2L} \to X$.
\end{proof}

\begin{cor}\label{cor:omega-MZV-222}
        It holds that
        \begin{align*}
                1+\sum_{r\ge 1}Z_{\omega}(e_{2}^{r})X^{2r}=
                \frac{\sin{(\omega^{-1}\arcsin{(\pi i \omega X)})}}{\pi i X}
                \exp{\left(-\frac{\arcsin^{2}{(\pi i \omega X)}}{\pi i \omega}\right)}.
        \end{align*}
\end{cor}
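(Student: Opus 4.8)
The plan is to read off this corollary as the case $L=1$ of the theorem just proved — in fact directly from the intermediate identities \eqref{eq:e2-omega-generating} and \eqref{eq:formula-HX} established in its proof. When $L=1$ the cyclotomic average $\sum_{m=1}^{L}$ consists of a single term, the prefactor $(-1)^{(L-1)r}$ equals $1$, and $e_{2}^{\circ_{\hbar}L}=e_{2}$, so that the left-hand side of \eqref{eq:e2-omega-generating} is already exactly $1+\sum_{r\ge 1}Z_{\omega}(e_{2}^{r})X^{2r}$ and the right-hand side is $\exp\bigl(H(e^{\pi i}X)\bigr)$.

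First I would note that $H$ is an even function of $X$: the first summand of \eqref{eq:formula-HX} is even because $\arcsin$ is odd, and in the second summand the sign changes of $\sin(\omega^{-1}\arcsin(\pi i\omega X))$ and of $\pi i X$ cancel under $X\mapsto -X$. Hence $H(e^{\pi i}X)=H(-X)=H(X)$, and \eqref{eq:e2-omega-generating} with $L=1$ reads
\begin{align*}
1+\sum_{r\ge 1}Z_{\omega}(e_{2}^{r})X^{2r}=\exp\bigl(H(X)\bigr).
\end{align*}
Substituting the closed form \eqref{eq:formula-HX}, $H(X)=-\frac{\arcsin^{2}(\pi i\omega X)}{\pi i\omega}+\log\frac{\sin(\omega^{-1}\arcsin(\pi i\omega X))}{\pi i X}$, and exponentiating so that the logarithmic summand turns into the factor $\sin(\omega^{-1}\arcsin(\pi i\omega X))/(\pi i X)$, I obtain precisely the claimed identity.

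There is essentially no obstacle here: all the analytic input — the evaluation $Z_{\omega}(\hbar^{-2k}\phi_{2k})=G_{2k}(\omega)$ from Proposition \ref{prop:G-omega}, its integer specialisation in Corollary \ref{cor:G-integer}, the classical expansion \eqref{eq:bernoulli-even}, the defining expansion \eqref{eq:bernoulli-omega} of $B_{2n}(\omega)$, and the algebraic identity \eqref{eq:main-e2} underlying \eqref{eq:e2-omega-generating} — has already been assembled in the proof of the theorem, and the present statement is its $L=1$ shadow. The only point that requires a moment's attention is the parity remark ensuring $H(e^{\pi i}X)=H(X)$. As a consistency check I would verify that letting $\omega\to 0$ collapses the right-hand side to $\sin(\pi i X)/(\pi i X)$, recovering the classical evaluation of $\zeta(2,\dots,2)$ recalled in the introduction.
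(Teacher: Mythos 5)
Your proposal is correct and follows exactly the paper's own route: the paper also deduces the corollary from \eqref{eq:e2-omega-generating} with $L=1$ together with the closed form \eqref{eq:formula-HX}, using the evenness $H(-X)=H(X)$. Your added justification of the parity of $H$ and the $\omega\to 0$ consistency check are sound but not needed beyond what the paper states.
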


\begin{proof}
        It follows from \eqref{eq:e2-omega-generating} with $L=1$ and
        \eqref{eq:formula-HX} because $H(-X)=H(X)$.
\end{proof}


\appendix

\section{Stirling numbers}\label{sec:app-stirling}

Stirling numbers ${m\brack n}$ and ${m\brace n}$
are defined by the initial condition
\begin{align*}
        {m\brack 0}={0 \brack m}=\delta_{m, 0}, \qquad
        {m\brace 0}={0\brace m}=\delta_{m, 0}
\end{align*}
and the recurrence relations
\begin{align}
        {m+1 \brack n }={m \brack n-1}+m{m \brack n},
        \qquad
        {m+1 \brace n}={m \brace n-1}+n{m \brace n}
        \label{eq:recurrence-stirling}
\end{align}
for $m, n \in \mathbb{Z}$.
It holds that ${m \brack n}={m \brace n}=0$ if $1\le m<n$.
In this paper we use the following relations.
See, e.g., \cite{AIK} for the proof.
\begin{align}
         &
        \frac{(e^{T}-1)^{m}}{m!}=\sum_{n\ge 1}
        {n \brace m }\frac{T^{n}}{n!} \qquad (m\ge 1),
        \label{eq:stirling-second-exp}
        \\
         &
        \frac{(-\log{(1-T)})^{m}}{m!}=\sum_{n\ge 1}
        {n \brack m}\frac{T^{n}}{n!} \qquad (m\ge 1),
        \label{eq:stirling-first-log}
        \\
         &
        \sum_{j\ge 0}(-1)^{j}{m \brace j} {j \brack n}=(-1)^{m}\delta_{m,n}
        \qquad (m, n\ge 0),
        \label{eq:stirling-duality}  \\
         &
        \prod_{a=1}^{m}(z+a)
        =\sum_{a=0}^{m}{m+1 \brack a+1} z^{a} \qquad (m\ge 0),
        \label{eq:shifted-factorial} \\
         &
        \sum_{l=0}^{n}(-1)^{l}\binom{n}{l}l^{m}=
        (-1)^{n}n!{ m \brace n} \qquad (m, n\ge 0).
        \label{eq:binom-power-sum}
\end{align}
{}From \eqref{eq:shifted-factorial}, we see that
\begin{align}
        { l \brack 1}=(l-1)!, \quad
        {l \brack k}=(l-1)!\sum_{l>m_{1}>\cdots >m_{k-1}>0}
        \frac{1}{m_{1} \cdots m_{k-1}} \qquad (l\ge 1, k\ge 2).
        \label{eq:stirling-first-harmonic}
\end{align}


\section{Properties of the power series $\varphi(\theta; x$)}\label{sec:app2}

\begin{lem}
        For $m\ge 0$ and $n\ge 1$, it holds that
        \begin{align*}
                \sum_{k=0}^{n}(-1)^{k}\binom{n}{k}(k+1)^{m}=
                (-1)^{n}n! { m+1 \brace n+1 }.
        \end{align*}
        In particular, if $n>m\ge 0$, then we have
        \begin{align}
                \sum_{k=0}^{n-1}(-1)^{k}\binom{n}{k}(k+1)^{m}=
                (-1)^{n-1}(n+1)^{m}.
                \label{eq:lem-power-sum}
        \end{align}
\end{lem}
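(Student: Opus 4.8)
The statement to prove is the Lemma at the end: for $m\ge 0$ and $n\ge 1$,
\[
\sum_{k=0}^{n}(-1)^{k}\binom{n}{k}(k+1)^{m}=(-1)^{n}n!{m+1\brace n+1},
\]
and in particular, when $n>m\ge 0$, equation \eqref{eq:lem-power-sum} holds.

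Plan of proof. The first identity is essentially a re-indexing of the classical Stirling-number formula \eqref{eq:binom-power-sum}, namely $\sum_{l=0}^{N}(-1)^{l}\binom{N}{l}l^{M}=(-1)^{N}N!{M\brace N}$. The plan is to apply \eqref{eq:binom-power-sum} with $N=n+1$ and $M=m+1$, and then isolate the terms we want. First I would write
\[
\sum_{l=0}^{n+1}(-1)^{l}\binom{n+1}{l}l^{m+1}=(-1)^{n+1}(n+1)!{m+1\brace n+1}.
\]
The $l=0$ term vanishes (since $m+1\ge 1$), so the sum runs over $l=1,\dots,n+1$; substituting $l=k+1$ with $k=0,\dots,n$ and using $\binom{n+1}{k+1}=\frac{n+1}{k+1}\binom{n}{k}$ gives
\[
(n+1)\sum_{k=0}^{n}(-1)^{k+1}\binom{n}{k}(k+1)^{m}=(-1)^{n+1}(n+1)!{m+1\brace n+1}.
\]
Dividing both sides by $-(n+1)$ yields the claimed identity. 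This step is routine and I do not anticipate any obstacle here.

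For the special case, when $n>m\ge 0$ we have $m+1\le n<n+1$, so ${m+1\brace n+1}=0$ by the vanishing property ${M\brace N}=0$ for $1\le M<N$ recorded in Appendix \ref{sec:app-stirling}. Hence the full alternating sum $\sum_{k=0}^{n}(-1)^{k}\binom{n}{k}(k+1)^{m}$ equals $0$. The last step is to peel off the top term $k=n$: since $\binom{n}{n}=1$, we get
\[
\sum_{k=0}^{n-1}(-1)^{k}\binom{n}{k}(k+1)^{m}=-(-1)^{n}(n+1)^{m}=(-1)^{n-1}(n+1)^{m},
\]
which is exactly \eqref{eq:lem-power-sum}.

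The only thing to be careful about is the edge case $n=1$ (so $m=0$): there the "special case" sum $\sum_{k=0}^{0}$ has the single term $k=0$, giving $1$, and indeed $(-1)^{0}(1+1)^{0}=1$, so the formula checks out; more generally one should note that the first identity already forces $m\ge 0$ in the exponent, and the reduction $\binom{n+1}{k+1}=\frac{n+1}{k+1}\binom{n}{k}$ is valid for $0\le k\le n$. There is no real obstacle; the proof is a short manipulation of \eqref{eq:binom-power-sum} together with the vanishing of Stirling numbers of the second kind below the diagonal.
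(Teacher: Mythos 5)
Your proof is correct, and it takes a slightly different route from the paper's. You apply \eqref{eq:binom-power-sum} once, with the shifted parameters $N=n+1$ and $M=m+1$, and then convert $\sum_{l}(-1)^{l}\binom{n+1}{l}l^{m+1}$ into the desired sum via the reindexing $l=k+1$ and the absorption identity $\binom{n+1}{k+1}(k+1)=(n+1)\binom{n}{k}$; the Stirling number ${m+1\brace n+1}$ appears immediately and no further manipulation of Stirling numbers is needed. The paper instead leaves the exponent $m$ alone, splits $\binom{n}{k}=\binom{n+1}{k+1}-\binom{n}{k+1}$ by Pascal's rule, applies \eqref{eq:binom-power-sum} twice to get $(-1)^{n}(n+1)!{m\brace n+1}-(-1)^{n-1}n!{m\brace n}$, and then invokes the recurrence \eqref{eq:recurrence-stirling} to recombine this into $(-1)^{n}n!{m+1\brace n+1}$. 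Your version is marginally more economical (one application of the key formula, no recurrence), while the paper's makes the role of the recurrence ${m+1\brace n+1}={m\brace n}+(n+1){m\brace n+1}$ explicit; both are short and both deductions of the special case \eqref{eq:lem-power-sum} — vanishing of ${m+1\brace n+1}$ for $n>m$ plus peeling off the $k=n$ term — are the same. Your attention to the edge case $n=1$, $m=0$ is a harmless bonus.
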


\begin{proof}
        We have
        \begin{align*}
                \sum_{k=0}^{n}(-1)^{k}\binom{n}{k}(k+1)^{m} & =
                \sum_{k=0}^{n}(-1)^{k}
                \left(\binom{n+1}{k+1}-\binom{n}{k+1}\right)
                (k+1)^{m}                                                                                     \\
                                                            & =\sum_{k=0}^{n+1}(-1)^{k-1}\binom{n+1}{k}k^{m}-
                \sum_{k=0}^{n}(-1)^{k-1}\binom{n}{k}k^{m}.
        \end{align*}
        It follows from \eqref{eq:recurrence-stirling} and \eqref{eq:binom-power-sum}
        that the right-hand side is equal to
        \begin{align*}
                (-1)^{n}(n+1)! {m \brace n+1}-
                (-1)^{n-1}n! {m \brace n}=
                (-1)^{n}n! { m+1 \brace n+1}.
        \end{align*}
\end{proof}

\begin{lem}\label{lem:key-strange}
        For $n\ge 0$, it holds that
        \begin{align}
                \frac{1}{x-y}\left(\prod_{a=1}^{n}(x-a)-\prod_{a=1}^{n}(y-a)\right)=
                \sum_{k=0}^{n-1}\binom{n}{k}
                \prod_{a=1}^{k}\left(\frac{k+1}{n+1}y-a\right)
                \prod_{a=1}^{n-1-k}\left(x-\frac{k+1}{n+1}y-a\right).
                \label{eq:lem-key-strange}
        \end{align}
\end{lem}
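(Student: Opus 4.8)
The plan is to treat \eqref{eq:lem-key-strange} as an identity in $\mathbb{Q}[x,y]$ and verify it by interpolation in $x$. Write $f_m(w):=\prod_{a=1}^{m}(w-a)$, so the left-hand side is $\bigl(f_n(x)-f_n(y)\bigr)/(x-y)$. Both sides are polynomials in $x$ of degree at most $n-1$ (on the right, only the $k=0$ term $f_{n-1}\!\bigl(x-\tfrac1{n+1}y\bigr)$ reaches that degree), so it is enough to check equality at the $n$ points $x=1,2,\dots,n$. At $x=a\in\{1,\dots,n\}$ the left-hand side is $\bigl(f_n(a)-f_n(y)\bigr)/(a-y)=-f_n(y)/(a-y)=\prod_{c\ne a}(y-c)$ because $f_n(a)=0$. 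Hence \eqref{eq:lem-key-strange} is equivalent to the family of one-variable identities
\begin{align*}
\sum_{k=0}^{n-1}\binom nk f_k\!\left(\tfrac{k+1}{n+1}\,y\right)f_{n-1-k}\!\left(a-\tfrac{k+1}{n+1}\,y\right)=\prod_{\substack{c=1\\ c\ne a}}^{n}(y-c)\qquad(a=1,\dots,n).
\end{align*}

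The lemma just proved is exactly what drives the alternating sums over $k$: in the equivalent polynomial form, \eqref{eq:lem-power-sum} gives $\sum_{k=0}^{n-1}(-1)^k\binom nk P(k+1)=(-1)^{n-1}P(n+1)$ for every $P$ with $\deg P<n$. To bring this to bear, rewrite each summand as $f_k(v)\,f_{n-1-k}(a-v)=(-1)^{n-1-k}\prod_{m\in M_k(a)}(v-m)$, where $M_k(a)$ is the size-$(n-1)$ multiset $\{1,\dots,k\}\uplus\{a-1,a-2,\dots,a-n+1+k\}$. For the endpoint values this collapses cleanly. When $a=n$ the two blocks are disjoint with union $\{1,\dots,n-1\}$, so $f_k(v)f_{n-1-k}(n-v)=(-1)^{n-1-k}f_{n-1}(v)$; substituting $v=\tfrac{k+1}{n+1}y$ turns the left side into $(-1)^{n-1}\sum_k(-1)^k\binom nk f_{n-1}\!\bigl(\tfrac{k+1}{n+1}y\bigr)$, and since $u\mapsto f_{n-1}\!\bigl(\tfrac u{n+1}y\bigr)$ has degree $n-1<n$ in $u$, \eqref{eq:lem-power-sum} evaluates this to $f_{n-1}(y)=\prod_{c\ne n}(y-c)$. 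The case $a=1$ is parallel, now with $f_k(v)f_{n-1-k}(1-v)=(-1)^{n-1-k}f_{n-1}\!\bigl(v+(n-1-k)\bigr)$; the argument is again linear in $k+1$, so \eqref{eq:lem-power-sum} applies and returns $f_{n-1}(y-1)=\prod_{c\ne1}(y-c)$.

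The genuine obstacle is the interior range $1<a<n$: there the two blocks of $M_k(a)$ overlap, $f_k(v)f_{n-1-k}(a-v)$ is no longer $(-1)^{n-1-k}$ times a single translate of $f_{n-1}$, and — regarded as a polynomial in $v$ — its coefficients depend on $k$ non-polynomially (through Stirling numbers and elementary symmetric functions whose index grows with $k$), so \eqref{eq:lem-power-sum} cannot be applied term by term. My plan there is to expand both falling factorials by \eqref{eq:shifted-factorial}, regroup the resulting double sum so that, after using the Stirling convolution $\sum_k\binom nk{k\brack i}{n-k\brack j}=\binom{i+j}{i}{n\brack i+j}$ (a consequence of \eqref{eq:stirling-first-log}), the inner $k$-sums acquire the shape governed by \eqref{eq:lem-power-sum}, and then check that the surviving terms reassemble into $\prod_{c\ne a}(y-c)$. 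The delicate part will be tracking the $k$-dependent summation ranges through this regrouping. An induction on $n$ is conceivable but is hampered by the mismatch between the denominator $n+1$ here and the $n$ that would occur at level $n-1$, so the interpolation-plus-coefficient-bookkeeping route seems preferable.
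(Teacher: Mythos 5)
Your reduction is the same as the paper's: both sides are polynomials in $x$ of degree at most $n-1$, so it suffices to check $x=m$ for $m=1,\dots,n$, where the left-hand side becomes $\prod_{c\ne m}(y-c)$. Your treatment of $m=1$ and $m=n$ is correct. But the proof stops there: for $1<m<n$ you only describe a plan (expand by \eqref{eq:shifted-factorial}, regroup, invoke a Stirling convolution), and that plan is not carried out; the sums that actually arise involve $\binom{n}{k}{k+1 \brack i+1}{n-k \brack j+1}$ multiplied by $k$-dependent powers of $(k+1)$ and by polynomials in $k+1$ coming from $(m-\tfrac{k+1}{n+1}y)^{j}$, which is not the shape of the convolution you quote. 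So the main case of the lemma is unproved.

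Moreover, the diagnosis that motivates the detour is incorrect. You claim that for $1<m<n$ the overlap of the two blocks in $M_k(m)=\{1,\dots,k\}\uplus\{m-n+1+k,\dots,m-1\}$ makes the summand depend on $k$ ``non-polynomially,'' so that \eqref{eq:lem-power-sum} cannot be applied termwise. In fact the same mechanism that works at the endpoints works for every $m$: as multisets,
\begin{align*}
\{1,\dots,k\}\uplus\{m-n+1+k,\dots,m-1\}=\{1,\dots,m-1\}\uplus\{m-n+1+k,\dots,k\},
\end{align*}
i.e.\ two blocks of lengths $m-1$ and $n-m$ that do \emph{not} depend on $k$. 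Writing $v=\tfrac{k+1}{n+1}y$ and reindexing the second block by $a\mapsto k+1-a$, the summand becomes
\begin{align*}
(-1)^{k}\prod_{a=1}^{m-1}\Bigl(a-\tfrac{k+1}{n+1}y\Bigr)\prod_{a=1}^{n-m}\Bigl((k+1)\bigl(1-\tfrac{y}{n+1}\bigr)-a\Bigr),
\end{align*}
a product of exactly $n-1$ factors each linear in $k+1$, hence $(-1)^{k}Q(k+1)$ for a polynomial $Q$ of degree at most $n-1<n$ with coefficients independent of $k$. Applying \eqref{eq:lem-power-sum} (extended by linearity, after expanding via \eqref{eq:shifted-factorial}) gives $(-1)^{n-1}Q(n+1)=\prod_{c\ne m}(y-c)$, as required. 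This block rearrangement is exactly the step the paper's proof performs and the idea your argument is missing; with it, your endpoint computations become the special cases $m=n$ (second block empty) and $m=1$ (first block empty) of a uniform argument.
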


\begin{proof}
        It is trivial when $n=0$. We assume that $n\ge 1$.
        Both sides are monic polynomials in $x$ of
        degree $n-1$.
        Hence it suffices to show that they are equal at
        $x=m$ for $1\le m \le n$.
        We denote the right-hand side by $P(x)$.

        Suppose that $1\le m \le n$.
        We calculate
        \begin{align*}
                P(m) & =\sum_{k=0}^{n-1}(-1)^{k}\binom{n}{k}
                \prod_{a=1}^{k}\left(-\frac{k+1}{n+1}y+a\right)
                \prod_{a=m-n+1+k}^{m-1}\left(-\frac{k+1}{n+1}y+a\right) \\
                     & =\sum_{k=0}^{n-1}(-1)^{k}\binom{n}{k}
                \prod_{a=1}^{m-1}\left(-\frac{k+1}{n+1}y+a\right)
                \prod_{a=m-n+1+k}^{k}\left(-\frac{k+1}{n+1}y+a\right)   \\
                     & =\sum_{k=0}^{n-1}(-1)^{k}\binom{n}{k}
                \prod_{a=1}^{m-1}\left(-\frac{k+1}{n+1}y+a\right)
                \prod_{a=1}^{n-m}\left((k+1)(1-\frac{y}{n+1})-a\right).
        \end{align*}
        By using \eqref{eq:shifted-factorial},
        we see that the right-hand side is equal to
        \begin{align*}
                 &
                \sum_{a=0}^{m-1}\sum_{b=0}^{n-m}(-1)^{n-m-b}
                {m \brack a+1} {n-m+1 \brack b+1}
                \left(-\frac{y}{n+1}\right)^{a}
                \left(1-\frac{y}{n+1}\right)^{b} \\
                 & {}\qquad \times
                \sum_{k=0}^{n-1}(-1)^{k}\binom{n}{k}(k+1)^{a+b}.
        \end{align*}
        By applying \eqref{eq:lem-power-sum} to the sum over $0\le k\le n-1$
        and using \eqref{eq:shifted-factorial} again,
        we obtain
        \begin{align*}
                 &
                \sum_{a=0}^{m-1}\sum_{b=0}^{n-m}(-1)^{n-m-b}
                {m \brack a+1}{n-m+1 \brack b+1}
                \left(-\frac{y}{n+1}\right)^{a}
                \left(1-\frac{y}{n+1}\right)^{b}(-1)^{n-1}(n+1)^{a+b} \\
                 & =(-1)^{n-1}
                \sum_{a=0}^{m-1}
                {m \brack a+1 }(-y)^{a}
                \sum_{b=0}^{n-m}
                (-1)^{n-m-b}
                {n-m+1 \brack b+1}
                (n+1-y)^{b}                                           \\
                 & =
                (-1)^{n-1}\prod_{a=1}^{m-1}(-y+a)\prod_{a=1}^{n-m}(n+1-y-a)=
                \prod_{a=1}^{m-1}(y-a)\prod_{a=m+1}^{n}(y-a),
        \end{align*}
        which is equal to the left-hand side of \eqref{eq:lem-key-strange}
        at $x=m$.
\end{proof}

\begin{lem}\label{lem:exp-strange}
        It holds that
        \begin{align}
                e^{\lambda \varphi(\theta; x)}=1+\lambda \sum_{n=1}^{\infty}\frac{x^{n}}{n!}
                \prod_{a=1}^{n-1}(\lambda+n\theta-a).
                \label{eq:exp-strange}
        \end{align}
\end{lem}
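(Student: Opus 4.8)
The plan is to read the right-hand side of \eqref{eq:exp-strange} as a generating function $F_{\lambda}(x)$ and to show that, regarded as a formal power series in $x$ with coefficients in $\mathbb{Q}[\lambda]$ (with $\theta$ a formal parameter), it satisfies the first-order linear differential equation $F_{\lambda}'(x)=\lambda\,\varphi'(\theta;x)\,F_{\lambda}(x)$ together with $F_{\lambda}(0)=1$, where a prime denotes $d/dx$. Once this is established, \eqref{eq:exp-strange} is immediate, because $e^{\lambda\varphi(\theta;x)}$ satisfies the same initial value problem---it is a well-defined power series since $\varphi(\theta;0)=0$ by \eqref{eq:def-varphi}---and a first-order linear equation of this type has a unique formal power series solution with prescribed constant term.

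Set $c_{m}(\lambda)=m!\,[x^{m}]F_{\lambda}(x)$, so that $c_{0}=1$ and $c_{m}(\lambda)=\lambda\prod_{a=1}^{m-1}(\lambda+m\theta-a)$ for $m\ge1$, and set $d_{j}=j!\,[x^{j}]\varphi(\theta;x)=\prod_{a=1}^{j-1}(j\theta-a)$ in accordance with \eqref{eq:def-varphi}. Extracting the coefficient of $x^{m-1}$ shows that the differential equation above is equivalent to the convolution identity
\begin{align*}
        \frac{1}{\lambda}\,c_{m}(\lambda)=\sum_{j=1}^{m}\binom{m-1}{j-1}d_{j}\,c_{m-j}(\lambda)\qquad(m\ge1),
\end{align*}
so the whole matter reduces to proving this identity of polynomials in $\lambda$. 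For $m=1$ it is the trivial statement $1=d_{1}c_{0}$. For $m\ge2$ I would obtain it from Lemma \ref{lem:key-strange}: take \eqref{eq:lem-key-strange} with $n$ replaced by $m-1$ and specialize $x\mapsto\lambda+m\theta$ and $y\mapsto m\theta$. Then $x-y=\lambda$, $\tfrac{k+1}{m}\,y=(k+1)\theta$ and $x-\tfrac{k+1}{m}\,y=\lambda+(m-1-k)\theta$, so the left-hand side of \eqref{eq:lem-key-strange} becomes $\tfrac1\lambda\bigl(\tfrac1\lambda c_{m}(\lambda)-d_{m}\bigr)$, while the two products on the right become $\prod_{a=1}^{k}\bigl((k+1)\theta-a\bigr)=d_{k+1}$ and $\prod_{a=1}^{m-2-k}\bigl(\lambda+(m-1-k)\theta-a\bigr)=\tfrac1\lambda c_{m-1-k}(\lambda)$. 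Clearing the common factor $1/\lambda$, reindexing the sum by $j=k+1$, and observing that $d_{m}=\binom{m-1}{m-1}d_{m}\,c_{0}(\lambda)$ is exactly the $j=m$ term then yields the desired recursion.

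Finally, to pass from the recursion back to the differential equation, divide it by $(m-1)!$: the right-hand side becomes $\sum_{j=1}^{m}\bigl(j\,[x^{j}]\varphi\bigr)[x^{m-j}]F_{\lambda}=[x^{m-1}]\bigl(\varphi'(\theta;x)F_{\lambda}(x)\bigr)$ and the left-hand side becomes $\tfrac{m}{\lambda}[x^{m}]F_{\lambda}(x)$, so summing over $m\ge1$ gives $\tfrac1\lambda F_{\lambda}'(x)=\varphi'(\theta;x)F_{\lambda}(x)$, and $F_{\lambda}(0)=c_{0}=1$. The only real difficulty is bookkeeping: aligning the indices in Lemma \ref{lem:key-strange} after the substitution, respecting the empty-product conventions and the degenerate case $m=1$, and noticing that the term $-\prod_{a=1}^{m-1}(y-a)$ on the left of \eqref{eq:lem-key-strange} is precisely what supplies the $j=m$ summand of the recursion---where it is essential to use the convention $c_{0}=1$ rather than the value $\lambda$ that the formula for $c_{m}(\lambda)$ would formally give at $m=0$.
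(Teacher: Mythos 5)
Your proposal is correct and follows essentially the same route as the paper: both arguments rest on the uniqueness of the formal power series solution of $F'=\lambda\varphi'(\theta;x)F$ with $F(0)=1$, and both reduce the resulting coefficient recurrence to Lemma \ref{lem:key-strange} specialized at $x=\lambda+m\theta$, $y=m\theta$. The only difference is direction (the paper computes the coefficients of $e^{\lambda\varphi}$ by induction, you verify that the candidate series satisfies the ODE), which is the same computation read backwards.
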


\begin{proof}
        Denote the left-hand side by $\Psi(x)$
        and set $\Psi(x)=\sum_{n\ge 0}d_{n}x^{n}$.
        The power series $\Psi(x)$ is uniquely determined from $\Psi(0)=1$ and
        the differential equation
        \begin{align*}
                \Psi'(x)=\lambda
                \left(\sum_{k\ge 0}\frac{x^{k+1}}{k!}\prod_{a=1}^{k}((k+1)\theta-a)\right)
                \Psi(x),
        \end{align*}
        which imply that $d_{0}=1$ and
        the recurrence relation
        \begin{align*}
                d_{n+1}=\frac{\lambda}{n+1}\sum_{k=0}^{n}\frac{d_{n-k}}{k!}
                \prod_{a=1}^{k}((k+1)\theta-a)
        \end{align*}
        for $n\ge 0$.
        We prove
        \begin{align*}
                d_{n}=\frac{\lambda}{n!}\prod_{a=1}^{n-1}(\lambda+n\theta-a)
        \end{align*}
        for $n\ge 1$ by induction on $n$.
        It holds when $n=1$ since $d_{1}=\lambda d_{0}=\lambda$.
        Suppose that $n\ge 1$.
                {}From the induction hypothesis, we see that
        \begin{align*}
                d_{n+1}=\frac{\lambda}{(n+1)!}\left(
                \prod_{a=1}^{n}((n+1)\theta-a)+\lambda
                \sum_{k=0}^{n-1}\binom{n}{k}
                \prod_{a=1}^{k}((k+1)\theta-a)
                \prod_{a=1}^{n-k-1}(\lambda+(n-k)\theta-a)
                \right).
        \end{align*}
        By using \eqref{eq:lem-key-strange} with
        $x=\lambda+(n+1)\theta$ and $y=(n+1)\theta$,
        we see that the sum over $0\le k \le n-1$ is equal to
        \begin{align*}
                \frac{1}{\lambda}\left(
                \prod_{a=1}^{n}(\lambda+(n+1)\theta-a)-\prod_{a=1}^{n}((n+1)\theta-a)
                \right).
        \end{align*}
        Thus we obtain
        \begin{align*}
                d_{n+1}=\frac{\lambda}{(n+1)!}\prod_{a=1}^{n}(\lambda+(n+1)\theta-a).
        \end{align*}
        This completes the proof.
\end{proof}

\begin{proof}[Proof of Proposition \ref{prop:varphi-power}]
        We expand both sides of \eqref{eq:exp-strange}
        into a power series of $\lambda$.
        We see that
        \begin{align*}
                 &
                \lambda \sum_{n\ge 1}\frac{x^{n}}{n!}
                \prod_{a=1}^{n-1}(\lambda+n\theta-a)=
                \lambda \sum_{n\ge 1}\frac{x^{n}}{n!}
                \sum_{j\ge 1}(-1)^{n-j}{n \brack j}
                (\lambda+n\theta)^{j-1}                                    \\
                 & =              \lambda \sum_{n\ge 1}\frac{x^{n}}{n!}
                \sum_{j\ge 1}(-1)^{n-j}{n \brack j}
                \sum_{k=1}^{j}\binom{j-1}{k-1}(n\theta)^{j-k}\lambda^{k-1} \\
                 & =\sum_{k\ge 1}\lambda^{k}
                \sum_{n\ge 1}\frac{x^{n}}{n!}
                \sum_{j\ge k}(-1)^{n-j}{n \brack j}
                \binom{j-1}{k-1}(n\theta)^{j-k}.
        \end{align*}
        Hence the desired equality holds.
\end{proof}

\begin{prop}
        It holds that
        \begin{align}
                e^{\varphi(\theta; x)}-xe^{\theta\varphi(\theta; x)}=1.
                \label{eq:varphi-rel}
        \end{align}
\end{prop}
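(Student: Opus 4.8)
The plan is to read off \eqref{eq:varphi-rel} from Lemma~\ref{lem:exp-strange} by comparing the coefficients of $x^{n}$ on both sides. First I would specialize \eqref{eq:exp-strange} to $\lambda=1$, which gives
\[
        e^{\varphi(\theta;x)}=1+\sum_{n\ge 1}\frac{x^{n}}{n!}\prod_{a=1}^{n-1}(1+n\theta-a),
\]
and to $\lambda=\theta$, which gives
\[
        e^{\theta\varphi(\theta;x)}=1+\theta\sum_{n\ge 1}\frac{x^{n}}{n!}\prod_{a=1}^{n-1}\big((n+1)\theta-a\big).
\]
Multiplying the second identity by $x$ and shifting the summation index $n\mapsto n-1$, I get
\[
        x\,e^{\theta\varphi(\theta;x)}=x+\theta\sum_{n\ge 2}\frac{x^{n}}{(n-1)!}\prod_{a=1}^{n-2}(n\theta-a).
\]

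Next I would subtract $1$ from the first identity; its $n=1$ term contributes $x$, matching the part of degree $\le 1$ in the previous display, so \eqref{eq:varphi-rel} (in the form $e^{\varphi(\theta;x)}-1=x\,e^{\theta\varphi(\theta;x)}$) reduces to the coefficient identity
\[
        \frac{1}{n!}\prod_{a=1}^{n-1}(1+n\theta-a)=\frac{\theta}{(n-1)!}\prod_{a=1}^{n-2}(n\theta-a)\qquad(n\ge 2).
\]
This I would verify by reindexing the left-hand product via $a\mapsto a+1$, which turns it into $\prod_{b=0}^{n-2}(n\theta-b)=n\theta\prod_{b=1}^{n-2}(n\theta-b)$; since $\frac{n\theta}{n!}=\frac{\theta}{(n-1)!}$, the identity follows, and hence so does \eqref{eq:varphi-rel}.

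The whole argument is coefficientwise, so I do not expect a real obstacle; the only points needing care are the two index shifts (the one turning $x\cdot x^{n-1}$ into $x^{n}$ and the one inside the product) and the correct absorption of the linear term $x$ on both sides. Should one wish to avoid Lemma~\ref{lem:exp-strange}, an alternative is to check that both sides of \eqref{eq:varphi-rel} solve the same first-order differential equation and agree at $x=0$, using $\varphi'(\theta;x)=\sum_{k\ge 0}\frac{x^{k}}{k!}\prod_{a=1}^{k}\big((k+1)\theta-a\big)$; but the coefficient comparison above is shorter and self-contained.
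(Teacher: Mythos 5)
Your proof is correct and follows essentially the same route as the paper's: both apply Lemma~\ref{lem:exp-strange} at $\lambda=1$ and $\lambda=\theta$, shift the summation index in $x\,e^{\theta\varphi(\theta;x)}$, and match coefficients via the reindexing $\prod_{a=1}^{n-1}(1+n\theta-a)=n\theta\prod_{b=1}^{n-2}(n\theta-b)$. The index shifts and the coefficient identity you state all check out.
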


\begin{proof}
        Lemma \ref{lem:exp-strange} implies that
        \begin{align*}
                 &
                e^{\varphi(\theta; x)}-xe^{\theta \varphi(\theta; x)} \\
                 & =1+\sum_{n\ge 1}\frac{x^{n}}{n!}
                \prod_{a=1}^{n-1}(1+n\theta-a)-x\left(
                1+\theta \sum_{n\ge 1} \frac{x^{n}}{n!}
                \prod_{a=1}^{n-1}((n+1)\theta-a)
                \right)                                               \\
                 & =1+x+\theta \sum_{n\ge 2}\frac{x^{n}}{(n-1)!}
                \prod_{a=1}^{n-2}(n\theta-a)-x \left(
                1+\theta \sum_{n\ge 2}\frac{x^{n-1}}{(n-1)!}
                \prod_{a=1}^{n-2}(n\theta-a)
                \right)=1.
        \end{align*}
\end{proof}

\begin{proof}[Proof of Proposition \ref{prop:varphi}]
        Set $\theta=1/N$ and change $x \to -x$ in
        \eqref{eq:varphi-rel}.
        {}From the obtained equation, we see that
        $\alpha(x)^{N}=x^{N}(1-\alpha(x))$.
        It implies \eqref{eq:alpha-factorize}
        as noted in Remark \ref{rem:existence-alpha}.
\end{proof}


\end{document}